\tikzset{VertexStyle/.style ={shape= circle, fill = white, inner sep = 0pt, outer sep = 0pt, minimum size = 5pt,draw}}
\newtheorem{theorem}{Theorem}[section]
\newtheorem{corollary}{Corollary}[section]
\newtheorem{lemma}{Lemma}[section]
\newtheorem{proposition}{Proposition}[section]
\newtheorem{conjecture}{Conjecture}[section]
\theoremstyle{definition}
\newtheorem{definition}{Definition}[section]
\newtheorem{example}{Example}[section]
\newtheorem{remark}{Remark}[section]
\newtheorem{question}{Question}[section]
\newcommand*{\vphi}{\varphi}
\newcommand*{\defi}[1]{\emph{#1}}
\newcommand*{\tn}[1]{\textnormal{#1}} 
\newcommand*{\defegal}{\coloneqq}
\newcommand*{\iso}{\simeq}
\newcommand*{\corps}[1]{\mathbf{#1}}
\newcommand*{\Z}{\corps{Z}}
\newcommand*{\ZZ}{\Z/2\Z}
\newcommand*{\tiret}{\nobreakdash-\hspace{0pt}} 
\newcommand*{\enstq}[2]{\{#1 \mid #2\}}
\newcommand*{\presentation}[2]{\langle#1\mid#2\rangle}
\newcommand*{\restreinta}{\mathclose{}|\mathopen{}}
\DeclareMathOperator{\id}{Id}
\DeclareMathOperator{\SL}{SL}
\DeclareMathOperator{\diam}{diam}
\DeclareMathOperator{\schrei}{Sch}
\DeclareMathOperator{\cayl}{Cay}
\DeclareMathOperator{\Aut}{Aut}
\DeclareMathOperator*{\bigast}{\ast}
\DeclareMathOperator{\Star}{Star}
\DeclareMathOperator{\rank}{rank}
\DeclarePairedDelimiter\abs{\lvert}{\rvert}
\newcommand*{\factor}[1]{$#1$\tiret factor}
\newcommand*{\group}{\mathcal}
\newcommand*{\G}{{\group G}}
\newcommand*{\h}{{\group H}}
\newcommand*{\Cayley}[2]{\cayl(\group #1,#2^\pm)}
\newcommand*{\Schrei}[3]{\schrei(\group #1,\group #2,#3^\pm)}
\newcommand*\No{No.}
\newcommand*\auteur{Paul-Henry Leemann}
\author{\auteur}
\markboth{\auteur}{A transitivity criterion for Schreier graphs}
\title{Schreier graphs: transitivity and coverings.}
\begin{document}
\maketitle
\begin{abstract}
We give a characterization of isomorphisms between Schreier graphs in terms of the groups, subgroups and generating systems.
This characterization may be thought as a graph analog of Mostow's rigidity theorem for hyperbolic manifolds.
This allows us to give a transitivity criterion for Schreier graphs.
Finally, we show that Tarski monsters satisfy a strong simplicity criterion.
\end{abstract}
\section{Introduction}
It is well-known that if $\group H$ is a subgroup of a group $\G=\langle X\rangle$, $\group H$ is normal if and only if the corresponding Schreier graph $\Schrei{G}{H}{X}$ is vertex-transitive by automorphisms preserving the labeling.
It is also known that if $\Schrei{G}{H_1}{X}$ and $\Schrei{G}{H_2}{X}$ are isomorphic, then the subgroups $H_1$ and $H_2$ are isomorphic, but the converse is not true.

In this paper, we give a characterization of isomorphisms between Schreier graphs in terms of the groups, subgroups and generating systems.
In the case of regular graphs of even degree, this characterization may be thought of as a rigidity result ``a la Mostow''.
As a corollary, we have a characterization of vertex-transitive Schreier graphs (by automorphisms that may not preserve the labeling) in terms of the subgroup $H$.
Such subgroups will be called length-transitive.
They generalize the notion of normal subgroups.
This leads to a strengthening of the notion of simple group.
We prove that this notion is not equivalent to simplicity, by showing that for odd $n\geq 5$ alternating groups $\group A_n$ are not strongly simple in this sense.
We also exhibit non-trivial examples of strongly simple groups, namely Tarski groups.
These infinite strongly simple groups also allow us to partially answer a question of Benjamini concerning coverings of graphs in which the cover is a Cayley graph.

The paper is organized as follows.
Sections \ref{SectNotations} and \ref{sectionSchreier} introduce all the relevant notions and useful preliminary results.
In Section \ref{sectionTransitivity}, we prove our main theorem (Theorem \ref{KeyTheorem}) on isomorphisms between Schreier graphs and some corollaries on transitivity.
We also give a reformulation (Theorem \ref{ThmRigidity}) of our main theorem to make the relation with Mostow's rigidity theorem  more apparent.
In the next section, we investigate coverings and label-preserving coverings (also called $X$-coverings) of Schreier graphs.
Finally, in Section \ref{sectionGroups}, we define a stronger notion of simplicity for groups and prove that this definition is not equivalent to simplicity.
We use this to show that the Cayley graph of a Tarski monster can not $X$-cover an infinite transitive graph (distinct from itself).
\section{Notations and Definitions}\label{SectNotations}
For us, a \defi{graph} $\Gamma$ consists of two sets $E$ (\defi{edges}) and $V$ (\defi{vertices}), and two functions $\iota\colon E\to V$ and $\bar{\phantom{e}}\colon E\to E$ satisfying $\bar{\bar{ e}}=e$.
The vertex $\iota(e)$ is the initial vertex and the vertex $\tau(e)\defegal\iota(\bar e)$ is the final vertex of the edge $e$.
The edge $\bar e$ is the inverse of the edge $e$.
An unoriented edge is a pair $\{e,\bar e\}$.
The \defi{degree} of a vertex is the number of outgoing edges (equivalently the number of incomming edges).
A graph is \defi{locally finite} if every vertex has finite degree.
We will say that an edge $e$ is \defi{degenerate} if $\bar e=e$.
Note that this is possible only if $e$ is a loop.
Remark that a vertex with a unique loop has degree $1$ if the loop is degenerate and $2$ otherwise.
A graph with no degenerate loops correspond to the definition of a graph by Serre \cite{Serre} and many results about such graphs from \cite{Serre} or \cite{Stalling} can be extended easily to the general case.

For a set $X$ with an involution $\phantom x^{-1}\colon X\to X$, a \defi{labeling} of $\Gamma$ by $X$ consists of a function $f\colon E\to X$ such that $f(\bar e)=f(e)^{-1}$.
A \defi{morphism of graphs} is a map $\phi\colon \Gamma_1\to \Gamma_2$ wich preserves the graph structure,
meaning that $\phi(\bar e)=\overline{\phi(e)}$ and $\phi(\iota(e))=\iota(\phi(e))$.
If $\Gamma_1$ and $\Gamma_2$ are labeled graphs over the same set $X$ with label functions $f_1$ and $f_2$, we say that $\phi$ is an \defi{$X$-morphism} (or \defi{morphism of labeled graphs}) if $\phi$ is a morphism of graphs such that $f_1=f_2\cdot\phi$.
The set of all isomorphisms from a graph $\Gamma$ to itself is denoted by $\Aut(\Gamma)$.

The geometric realization of labeled graphs on figures is the following.
Vertices of the graph are drawn as nodes (fat points) and unoriented labeled edges $\{e,\bar e\}$ as labeled curves that join them.
If $e$ and $\bar e$ have same label $a$ (i.e. if $a=a^{-1}$), the corresponding curve is undirected and labeled by $a$.
If $e$ and $\bar e$ have labels $x$ and $x^{-1}\neq x$ respectively, the corresponding curve is directed from $\iota(e)$ to $\tau(e)$ and labeled by $x$.
See Figure \ref{ExempleProp} for a example of such a geometric realization.

It is easy to see that for every rooted labeled graph $\Gamma$ such that for each vertex $v$ and each label $a$ there exists at most one edge with initial vertex $v$ and label $a$, the only $X$-automorphism of $\Gamma$ that sends the root to the root is the identity.

A graph $\Gamma$ is said to be \defi{vertex-transitive} (or simply \defi{transitive}) if for every pair of vertices $x$ and $y$, there exists an automorphism $\phi\colon\Gamma\to\Gamma$ with $\phi(x)=y$.
The graph $\Gamma$ is \defi{almost transitive} if there exists a finite set $V_0$ of vertices such that every vertex of $\Gamma$ can be mapped onto $V_0$ by an automorphism of $\Gamma$.
A labeled graph is \defi{(almost) $X$-transitive} if it is (almost) transitive by $X$-automorphisms.

A \emph{generating system} $X$ of a group $\group A$ is a multiset of elements of $\group A$ ---~i.e. $X$ contains only elements of $\group A$ and an element $x\in X$ may appear more than once~--- such that the group $A$ is generated by elements of $X$.
The \defi{Cayley graph} of $\group A$ with respect to $X$ is the labeled graph $\Cayley{A}{X}$ with vertex set $\group A$, and for every $x$ with $x\in X$ or $x^{-1}\in  X$, an edge from $g$ to $h$ labeled by $x$ if and only if $h=gx$. 
Note that with this definition, loops and multiple edges (two vertices are connected by at least two edges) are allowed.

For $X$ and $\group A$ as before and $\h$ a subgroup of $\group A$, the \defi{Schreier graph} of $\h$ in $\group A$ (with respect to $X$) is defined as the graph $\Schrei{A}{H}{X}$ with vertices the right cosets $Hg=\enstq{hg}{h\in H}$ and with an edge labeled by $x$ from $Hg_1$ to $Hg_2$ if and only if $Hg_2=Hg_1x$.

Note that Cayley and Schreier graphs are rooted graphs in the sense that they have a distinguished vertex: $1$ for Cayley graphs and $\h$ for  Schreier graphs.

A (possibly non-labeled) graph $\Gamma$ is said to be a \defi{Cayley graph} (respectively a \defi{Schreier graph}) if it is isomorphic (as a non-labeled, non-rooted graph) to some Cayley graph (resp. Schreier graph).
With these definitions, it is easy to see that if $\h$ is a normal subgroup of $\group A$, then $\Schrei{A}{H}{X}\iso\Cayley{A/H}{X}$ is a Cayley graph.
This result justifies the particular definition of a Cayley graph that we use (allowing loops and multiples edges).

Let $\group A$ be a group with generating system $X$.
For any $g\in\group A$, the \defi{length of $g$ with respect to $X$} is $\abs g_X\defegal \min\enstq{n\in \corps N}{g=x_1\dots x_n,\ x_i\in X}$.
A word $w=w_1w_2\dots w_n$ on the alphabet $X$ is \defi{reduced} if $\abs w_X=n$.

\section{Basic facts about Schreier graphs}\label{sectionSchreier}
All Cayley and Schreier graphs are connected by definition.
Thus, from now on and unless otherwise specified, we will assume all graphs in the paper to be connected.

It is well-known that a graph $\Gamma$ is a Cayley graph of a group $\group A$ if and only if there exists a free and transitive action of $\group A$ on $\Gamma$.
Moreover, given a graph $\Gamma$ with a simply transitive action of a group $\group A$, Sabidussi shows in \cite{Sabidussi} an explicit way to put labels on edges of $\Gamma$ so as to make it a Cayley graph of $\group A$.
Namely, choose any vertex $v_0$ as the root and for any neighbor $w_i$ of $v_0$, label the edge from $v_0$ to $w_i$ by the unique element $x_i$ of $\group A$ that sends $v_0$ on $w_i$.
Then, use $x_i$ to label the remaining edges.
For example, the edge from $w_i$ to some vertex $u$ is labeled by $x_j$ if and only if $x_jx_i$ sends $v_0$ to $u$.
It is then easy to check that the action of $\group A$ is (in fact) also $X$-transitive.

A graph $\Gamma$ is a Schreier graph of some group if and only if it is a Schreier graph of a free product of the form
\begin{align}
\G=(\bigast_I \Z) \ast(\bigast_J \ZZ)=\presentation{x_i,y_j,\ i\in I,\ j\in J}{y_j^2},\tag{$\star$}\label{group}
\end{align}
with generating system $X=\{x_i\}_{i\in I}\sqcup \{y_j\}_{j\in J}$.
Note that the generating system $X$ of $\G$ described above is an actual subset of $\G$.
For any group $\group A$ and generating system $Z$, there exists a group $\G$ with generating set $X$ as above and a normal subgroup $\group N$ such that $\group A\iso \G/\group N$, $Z$ is the disjoint union of the $\pi(x)$ for $x\in X$, where $\pi\colon\G\to\G/\group N$ is the natural projection, and $\pi(x)^2=1$ 
 if and only if $x^2=1$.
The last condition ensures that $e$ and $\bar e$ are distinct in $\Schrei{G}{H}{X}$ if and only if they are distinct in $\Schrei{A}{H/N}{Z}$.
 
In fact, a graph is a Schreier graph if and only if it admits a decomposition into disjoint $1$ and \factor{2}s, where an \defi{\factor{n}} of a graph $\Gamma$ is a subgraph $\Delta$ of $\Gamma$ such that every vertex of $\Gamma$ has degree $n$ in $\Delta$.
Here, the \factor1s correspond to subgraphs consisting of edges labeled by a generator of order $2$ and the \factor{2}s to subgraphs with edges labeled by a generator of infinite order in the group $\G$.

This fact can been used to show that every regular graph of even degree without degenerate loops is a Schreier graph over a free group (\cite{Gross} for the finite case, and \cite{Cannizzo} for the locally finite case).
On the other side, Godsil and Royl showed that every finite transitive graph of odd degree admits a \factor{1}, see \cite{GodsilRoyle}.
This result extends to locally finite infinite transitive graphs of odd degree, using compacity and results from Aharoni (\cite{Aharoni}) on matchings in infinite graphs.
Putting all this together, we have that every locally finite transitive graph (of odd or even degree) is a Schreier graphs over a group $\G$ of the form \eqref{group}.

From now on, the letter $\G$ will always denote a group of the form \eqref{group}. 
In such a group, the only cancellations that can occur are of the form $ww^{-1}$ where $w$ is one of the generators.

For such a group $\G$, we have the easy but useful following lemma.
\begin{lemma}\label{lemmaPathWord}
 Let $\h$ be any subgroup of $\G$ and $\Gamma\defegal\Schrei{G}{H}{X}$ be the corresponding Schreier graph.
 Then, for every vertex $v$ in $\Gamma$, there is a bijection between reduced paths starting at $v$ and elements of $\G$.
 This bijection restricts to a bijection between closed reduced paths starting at $v=\h g$ and elements of $g^{-1}\h g$.
\end{lemma}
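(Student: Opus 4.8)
The plan is to read a path off as a word over the alphabet $X^\pm$ and then invoke the normal form in $\G$. Throughout write $\Gamma=\Schrei{G}{H}{X}$ and, for a word $w=w_1w_2\cdots w_n$ over $X^\pm$, let $\overline w\in\G$ denote the group element it represents.

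First I would record the defining feature of $\Gamma$: for every vertex $v=\h g$ and every letter $s\in X^\pm$ there is \emph{exactly one} edge with initial vertex $v$ and label $s$, namely the edge from $\h g$ to $\h gs$. (In particular $\Gamma$ meets the ``at most one edge per vertex and label'' hypothesis recalled earlier.) It follows that a path $p=(e_1,\dots,e_n)$ starting at $v$ is determined by $v$ together with the word $w(p)\defegal w_1w_2\cdots w_n$, where $w_i\defegal f(e_i)$: one reconstructs $p$ from $v$ and $w(p)$ by following, at each step, the unique outgoing edge carrying the prescribed letter, and conversely every word over $X^\pm$ is realized. Hence $p\mapsto w(p)$ is a bijection between paths starting at $v$ and words over $X^\pm$, and a one\nobreakdash-line induction gives that the endpoint of $p$ is $\h g\cdot\overline{w(p)}$.

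Next I would check that this bijection matches reduced paths with reduced words. A path $p$ fails to be reduced precisely when it backtracks somewhere, i.e. when $e_{k+1}=\overline{e_k}$ for some $k$. Since $\overline{e_k}$ is the unique edge with initial vertex $\tau(e_k)=\iota(e_{k+1})$ and label $w_k^{-1}$, we have $e_{k+1}=\overline{e_k}$ if and only if $w_{k+1}=w_k^{-1}$; so $p$ backtracks at step $k$ exactly when $w_kw_{k+1}=w_kw_k^{-1}$, which is exactly the pattern forbidden in a reduced word over $X^\pm$. (The order-$2$ generators $y_j$, which can produce degenerate loops in $\Gamma$, are harmless here: such a loop is its own inverse, so traversing it twice in a row is a backtrack, matching the non-reduced word $y_jy_j=y_jy_j^{-1}$.) Now I would invoke the normal form for the free product \eqref{group} — equivalently the fact, already noted, that the only cancellations in $\G$ are of the form $ss^{-1}$ with $s$ a generator — to conclude that each element of $\G$ is represented by a \emph{unique} reduced word, so that $w\mapsto\overline w$ is a bijection from reduced words over $X^\pm$ onto $\G$. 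Composing the two bijections gives a bijection between reduced paths starting at $v$ and elements of $\G$, sending $p$ to $\overline{w(p)}$.

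For the refined statement I take $v=\h g$ and ask when a reduced path $p$ starting at $v$ is closed. By the endpoint computation, $p$ is closed if and only if $\h g\cdot\overline{w(p)}=\h g$, i.e. $g\,\overline{w(p)}\,g^{-1}\in\h$, i.e. $\overline{w(p)}\in g^{-1}\h g$; thus the bijection $p\mapsto\overline{w(p)}$ restricts to a bijection between closed reduced paths at $\h g$ and elements of $g^{-1}\h g$, as claimed. I expect the only genuinely delicate point to be the equivalence ``reduced path $\Leftrightarrow$ reduced word'': it uses both uniqueness of the edge with a given initial vertex and label (to turn a backtrack into a letter/inverse pair) and the absence in $\G$ of cancellations other than $ss^{-1}$ with $s$ a generator (so that reducedness of $w(p)$, i.e. $\abs{w(p)}_X=n$, is the correct notion); everything else is bookkeeping.
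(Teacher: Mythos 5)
Your proof is correct and takes essentially the same route as the paper's: the paper simply cites Proposition~1.3 of \cite{NagnKaim} for the free-group case and then observes, exactly as you do, that a word in $\G$ is reduced iff it contains no subword $ss^{-1}$ while a path is reduced iff it contains no subpath $e\bar e$. You have merely written out in full the path--word--element correspondence (including the careful treatment of the order-$2$ generators and degenerate loops) that the citation is meant to cover.
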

\begin{proof}
For the case of free groups, see proposition 1.3 in \cite{NagnKaim}.
For the general case, notice that the presentation of $\G$ is chosen such that a word $w$ is reduced if and only if it doesn't contain a subword of the form $xx^{-1}$ or of the form $x^{-1}x$, where $x$ is any generator, and a path in $\Gamma$ is reduced if and only if it does not contains a subpath of the form $e\bar e$ or $\bar ee$.
\end{proof}
\section{A criterion for transitivity}\label{sectionTransitivity}

\begin{definition}\label{DefDegree}
Let $\group A$ be any group with generating system $X$.
The \defi{degree} of $\group A$ (with respect to $X$) is the degree of any vertex in the graph $\Cayley{A}{X}$.
\end{definition}
Notice that the degree of $\group A$ depends on the choice of the generating system and could be infinite.
Note that the degree of $\group A$ with respect to $X$ is in fact the sum of the number of $x\in X$ of order at most $2$ and of twice the number of $x\in X$ or order at least $3$.
\begin{definition}
 Let $\h_1=\presentation{X_1}{\mathcal R_1}$ and $\h_2=\presentation{X_2}{\mathcal R_2}$ be two arbitrary groups.
 A morphism $\alpha\colon \h_1\to\h_2$ \defi{preserves lengths} (with respect to $X_1$ and $X_2$) if for every $h$ in $\h_1$ we have $\abs{\alpha(h)}_{X_2}=\abs{h}_{X_1}$.
 If $\alpha$ is an isomorphism, we say that $\h_1$ and $\h_2$ are \defi{length-isomorphic}.
\end{definition}
\begin{definition}\label{LengthTrans}
Let $\G$ be as in  \eqref{group}.
A subgroup $\h$ of $\G$ is \defi{length-transitive} if it is length-isomorphic to all its conjugates.
That is, if for every $g$ in $\G$, there exists a group isomorphism $\alpha_g\colon\mathcal  H\to g^{-1}\h g$ which preserves lengths.
\end{definition}
\begin{remark}
In this definition, $\alpha_g$ is only defined on $\h$, not on $\G$ itself.

In general, we have $\h\iso g^{-1}\h g$, but the conjugation homomorphism does not preserve lengths unless $\G=\Z$ or $\G=\Z/2\Z$.
\end{remark}

We are now able to state our main result.
\begin{theorem}\label{KeyTheorem}
 Let $\G_1$ and $\G_2$ be as in  \eqref{group}.
 Suppose that $\Gamma_i\defegal\Schrei{G_i}{H_i}{X_i}$ for $i=1,2$.
 Then, there exists a graph isomorphism from $\Gamma_1$ to $\Gamma_2$ that respects roots (the image of the vertex $\h_1$ is the vertex $\h_2$) if and only if $\G_1$ and $\G_2$ have same degree and $\h_1$ and $\h_2$ are length-isomorphic.
 
 Moreover, there exists an $X$-isomorphism from $\Gamma_1$ to $\Gamma_2$ that respects roots if and only if $\G_1=\G_2$ and $\h_1=\h_2$.
\end{theorem}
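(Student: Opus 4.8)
The plan is to pass to universal covers. First I would record a structural fact: if $\G$ has the form~\eqref{group}, then $\Cayley{G}{X}$ is the regular tree of degree $d:=\deg(\G)$. Indeed, Lemma~\ref{lemmaPathWord} applied to the trivial subgroup shows that the only closed reduced path in $\Cayley{G}{X}$ based at $1$ is the trivial one, so $\Cayley{G}{X}$ has no cycle, no multiple edge and no (degenerate) loop; being connected and $d$-regular, it must be the $d$-regular tree $T_d$. Moreover $\G$ acts on $T_d$ by left translations, freely and by label-preserving automorphisms, and the quotient is $\h\backslash T_d=\Schrei{G}{H}{X}=:\Gamma$, with the orbit of $1$ as root; hence the projection $p\colon T_d\to\Gamma$ is the universal covering of $\Gamma$ with deck group $\h$ (again acting by left translations). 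Finally $\abs h_X=d_{T_d}(1,h\cdot 1)$ for every $h\in\h$.

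Granting this, both directions of the first statement follow from the standard covering theory of graphs. Suppose $\phi\colon\Gamma_1\to\Gamma_2$ is a root-respecting graph isomorphism. A graph isomorphism preserves vertex degrees, and $\Gamma_i$ is $\deg(\G_i)$-regular, so $\deg(\G_1)=\deg(\G_2)=:d$; thus both $\Gamma_i$ are covered by the same tree $T_d$, with projections $p_i\colon T_d\to\Gamma_i$. Lift $\phi$ to a graph isomorphism $\widetilde{\phi}\colon T_d\to T_d$ with $\widetilde{\phi}(1)=1$. For $h\in\h_1$, both $\widetilde{\phi}\circ h$ and $\widetilde{\phi}$ lift $\phi\circ p_1$ through $p_2$, hence differ by a deck transformation of $p_2$: there is a unique $\alpha(h)\in\h_2$ with $\widetilde{\phi}\circ h=\alpha(h)\circ\widetilde{\phi}$, and (arguing likewise with $\phi^{-1}$) the map $\alpha\colon\h_1\to\h_2$ is a group isomorphism. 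Since $\alpha(h)\cdot 1=\widetilde{\phi}(h\cdot 1)$ and $\widetilde{\phi}$ is a root-fixing isometry, $\abs{\alpha(h)}_{X_2}=d_{T_d}(1,h\cdot 1)=\abs h_{X_1}$, i.e.\ $\alpha$ preserves lengths. Conversely, assume $\deg(\G_1)=\deg(\G_2)=:d$ and let $\alpha\colon\h_1\to\h_2$ preserve lengths. As both actions on $T_d$ are free, $h\cdot 1\mapsto\alpha(h)\cdot 1$ is a well-defined bijection from the orbit $\h_1\cdot 1$ onto $\h_2\cdot 1$ fixing $1$, and it is an isometry since $d_{T_d}(h\cdot 1,h'\cdot 1)=\abs{h^{-1}h'}_{X_1}=\abs{\alpha(h^{-1}h')}_{X_2}=d_{T_d}(\alpha(h)\cdot 1,\alpha(h')\cdot 1)$. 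It extends uniquely to an isometry between the convex hulls of the two orbits, which are respectively $\h_1$- and $\h_2$-invariant subtrees of $T_d$; each vertex $v$ of such a hull carries one pendant subtree of $T_d$ for each incident edge leaving the hull, all these pendant subtrees being isomorphic rooted trees, and the hull-isometry identifies the edges at $v$ lying in the hull with those at its image, so the map extends to an automorphism $\psi$ of $T_d$, which --- if the extension is performed one $\h_1$-orbit of pendant subtrees at a time --- is $\alpha$-equivariant: $\psi\circ h=\alpha(h)\circ\psi$. Then $\psi(1)=1$ and $\psi\h_1\psi^{-1}=\h_2$, so $\psi$ descends to a root-respecting graph isomorphism $\Gamma_1=\h_1\backslash T_d\to\h_2\backslash T_d=\Gamma_2$.

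For the ``moreover'' part: if $\G_1=\G_2$ and $\h_1=\h_2$ then $\Gamma_1$ and $\Gamma_2$ are literally the same labeled graph, so the identity is a root-respecting $X$-isomorphism. Conversely, an $X$-isomorphism presupposes that $\Gamma_1$ and $\Gamma_2$ are labeled over one and the same set $X$ with involution; since a group of the form~\eqref{group} equipped with its standard generating system is determined by $X$ together with its involution (its order-two generators are exactly the involution's fixed points), this forces $\G_1=\G_2=:\G$ and identifies the two covering trees $\Cayley{G}{X}$. Rerunning the lifting argument, the lift $\widetilde{\phi}$ is now an $X$-automorphism of $\Cayley{G}{X}$ fixing the root, and such an automorphism is the identity because a Cayley graph has at most one edge of each label leaving each vertex (the observation recalled in Section~\ref{SectNotations}). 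Hence $\widetilde{\phi}=\id$, so $h=\alpha(h)$ for all $h\in\h_1$, i.e.\ $\h_1=\alpha(\h_1)=\h_2$.

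The step I expect to be the main obstacle is the construction of $\psi$ in the converse direction of the first statement. Conceptually it is a rigidity phenomenon --- a free action on a regular tree is determined, up to conjugation by a root-fixing tree automorphism, by the metric it induces on a single orbit --- and the real work is to promote the orbit isometry coming from $\alpha$ to a genuine $\alpha$-equivariant automorphism of the whole tree, which is exactly what the ``convex hull plus pendant subtrees'' bookkeeping above achieves. It is also precisely the place where one uses that $\alpha$ preserves lengths rather than merely that $\h_1\iso\h_2$; the remaining ingredients are the elementary structural remark about $\Cayley{G}{X}$ and routine covering-space theory.
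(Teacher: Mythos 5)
Your proof is correct, and it is worth recording that underneath the covering-space packaging it performs the very same construction as the paper, which works instead with the combinatorics of reduced words. Your only-if direction (lift $\phi$ and compare the two lifts $\widetilde{\phi}\circ h$ and $\alpha(h)\circ\widetilde{\phi}$) is Proposition \ref{PropTransitive} one level up: the paper reads $\alpha(h)$ off as the label of the image of the closed path representing $h$. For the converse, the paper's key step, Lemma \ref{LemmaExtension}, extends $\alpha$ to a bijection $\gamma\colon\G_1\to\G_2$ preserving lengths and initial segments; its set $C$ of initial segments of elements of $\h_1$ is precisely the convex hull of the orbit $\h_1\cdot 1$ in the tree $\Cayley{G_1}{X_1}$, its computation with maximal common initial segments is exactly the verification that your orbit isometry preserves Gromov products (hence extends uniquely to the hulls), and its level-by-level extension over $D=\G_1\setminus C$ is your equivariant matching of pendant subtrees. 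What your formulation buys is conceptual transparency: the rigidity phenomenon (a free action on $T_d$ is determined, up to conjugation by a root-fixing automorphism of the tree, by the metric it induces on a single orbit) is made explicit, and standard lifting arguments replace several ad hoc verifications. What it costs is that your two load-bearing steps --- the unique extension of an isometry between subsets of a tree to an isomorphism of their convex hulls, and the existence of an $\alpha$-equivariant bijection between the pendant edges at $v$ and at $\psi(v)$ --- are asserted as standard rather than proved, whereas the paper proves the corresponding combinatorial statements in Lemmas \ref{LemmaExtension} and \ref{LemmaProperties}; note also that your pendant-edge count and the paper's passage from a bijection $\bar E\to\bar F$ to a bijection $E\to F$ rely on local finiteness in exactly the same way, so you lose no rigour relative to the source. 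The ``moreover'' part is handled identically in both arguments.
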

It is possible to reformulate this theorem in order to have a rigidity theorem ``a la Mostow''.
Let $\Gamma_1$ and $\Gamma_2$ be two $2d$\tiret regular graphs without degenerate loop.
If they are isomorphic, then their fondamental groups $\pi_1(\Gamma_1)$ and $\pi_1(\Gamma_2)$ are isomorphic as abstract groups, but the converse is not necessarily true.
On the other hand, since the graphs are $2d$\tiret regular without degenerate loop, we have two coverings $p_i\colon \Gamma_i\to R_d$, where $R_d$ is the unique graph with one vertex and $d$ loops --- see section \ref{SectionCoverings} for more on coverings.
These two coverings induce two injections ${p_i}_*\colon\pi_1(\Gamma_i)\to\pi_1(R_d)=\langle X\rangle$, where loops of $R_d$ are in bijection with elements of $X$.
The situation for odd regular graphs is more complex since the projections $p_i$ may not exist.
More precisely, if $\Gamma$ is a $2d+1$\tiret regular graph without degenerate loop and $R_{d,1}$ denotes the graph with one vertex, $d$ loops and $1$ degenerate loop, then there exists a covering $p\colon \Gamma\to R_{d,1}$ if and only if $\Gamma$ admits a perfect matching, if and only if $\Gamma$ is isomorphic to a Schreier graph.
This gives us the alternative formulation (for regular graphs) of Theorem \ref{KeyTheorem}.
\begin{theorem}[Rigidity theorem for regular graphs]\label{ThmRigidity}
Let $\Gamma_1$ and $\Gamma_2$ be two locally finite regular graphs without degenerate loop.

If $\Gamma_1$ and $\Gamma_2$ are $2d$\tiret regular,
then they are isomorphic as graphs if and only if ${p_1}_*\bigl(\pi_1(\Gamma_1)\bigr)$ is length-isomorphic to ${p_2}_*\bigl(\pi_1(\Gamma_2)\bigr)$.

If $\Gamma_1$ and $\Gamma_2$ are $2d+1$\tiret regular and both admit a perfect matching,
then they are isomorphic as graphs if and only if ${p_1}_*\bigl(\pi_1(\Gamma_1)\bigr)$ is length-isomorphic to ${p_2}_*\bigl(\pi_1(\Gamma_2)\bigr)$.

Moreover, these statements are independent of the choice of the coverings $p_1$ and $p_2$.
\end{theorem}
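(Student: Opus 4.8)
The plan is to derive Theorem \ref{ThmRigidity} from Theorem \ref{KeyTheorem} through the classical correspondence between graph coverings and Schreier graphs, so that nearly all the work lies in setting up this correspondence in the graph formalism of the present paper (the one allowing degenerate loops). First I would fix a locally finite $2d$\tiret regular graph $\Gamma$ without degenerate loop and unwind what a covering $p\colon\Gamma\to R_d$ is. Labelling the $d$ loops of $R_d$ by $X=\{x_1,\dots,x_d\}$, so that $\pi_1(R_d)=\langle X\rangle$ is free of rank $d$, the covering $p$ pulls this labelling back to $\Gamma$; since $p$ restricts to a bijection on every star, the pulled-back labelling has, at each vertex, each element of $X$ exactly once outgoing and once incoming, i.e.\ it exhibits $\Gamma$ as a Schreier graph of $\G\defegal\bigast_{d}\Z$ over $X$ --- and this datum is the same as that of the covering. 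Such Schreier structures, equivalently such coverings, exist by the fact recalled in Section \ref{sectionSchreier} (\cite{Gross} in the finite case, \cite{Cannizzo} in the locally finite one). After also fixing a base vertex $v$, standard covering theory (\cite{Serre,Stalling}) identifies the rooted labelled graph $(\Gamma,v)$ with $\Schrei{G}{H}{X}$ for $\h\defegal{p}_*\bigl(\pi_1(\Gamma,v)\bigr)\leq\G$, a different choice of $v$ replacing $\h$ by a conjugate; indeed Lemma \ref{lemmaPathWord} says precisely that $\h$ consists of the words in $X$ read along the reduced closed paths at $v$. For a $2d+1$\tiret regular $\Gamma$ without degenerate loop admitting a perfect matching, the same discussion applies with $R_{d,1}$ in place of $R_d$, its additional degenerate loop being labelled by an order-two symbol $y$, so that $\pi_1(R_{d,1})=(\bigast_{d}\Z)\ast\ZZ$ (again by Lemma \ref{lemmaPathWord}, $R_{d,1}$ being the Schreier graph of that group in itself). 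Here the hypothesis that $\Gamma$ has no degenerate loop guarantees that the degenerate loop of $R_{d,1}$ lifts to genuine non-degenerate edges, which then necessarily form a perfect matching; conversely, a perfect matching of $\Gamma$ together with a \factorization{2} of the $2d$\tiret regular graph obtained by deleting it (which exists, once more by the Gross/Cannizzo fact) produces such a covering. This is precisely why the odd case of the theorem requires the extra hypothesis of a perfect matching.

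With this dictionary in hand the argument is short. Fix coverings $p_i\colon\Gamma_i\to R_d$ (resp.\ $\to R_{d,1}$) and base vertices $v_i$; by the first step $\Gamma_i=\Schrei{G}{H_i}{X}$ for \emph{the same} group $\G$ ($=\bigast_{d}\Z$, resp.\ $(\bigast_{d}\Z)\ast\ZZ$) and \emph{the same} generating system $X$, with $\h_i={p_i}_*\bigl(\pi_1(\Gamma_i,v_i)\bigr)$. In particular $\G_1$ and $\G_2$ are equal, hence trivially of the same degree (namely $2d$, resp.\ $2d+1$, since the free generators have infinite order and $y$ has order $2$), so the degree hypothesis of Theorem \ref{KeyTheorem} holds automatically and that theorem reduces to: there is a root-respecting graph isomorphism $\Gamma_1\to\Gamma_2$ if and only if $\h_1$ and $\h_2$ are length-isomorphic.

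It then only remains to trade root-respecting isomorphisms for arbitrary ones. If $\phi\colon\Gamma_1\to\Gamma_2$ is any graph isomorphism, then declaring $\phi(v_1)$ to be the base vertex of $\Gamma_2$ turns $\phi$ into a root-respecting isomorphism, so by the above $\h_1$ is length-isomorphic to ${p_2}_*\bigl(\pi_1(\Gamma_2,\phi(v_1))\bigr)$, a conjugate of $\h_2$; thus the conjugacy classes ${p_1}_*\bigl(\pi_1(\Gamma_1)\bigr)$ and ${p_2}_*\bigl(\pi_1(\Gamma_2)\bigr)$ contain length-isomorphic representatives, which is exactly what ``length-isomorphic'' means for these basepoint-free subgroups. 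Conversely, if some representatives are length-isomorphic, then choosing base vertices realizing them and applying Theorem \ref{KeyTheorem} yields a root-respecting, a fortiori an arbitrary, graph isomorphism $\Gamma_1\to\Gamma_2$. The ``moreover'' clause is now automatic: we have shown that for \emph{every} choice of $p_1,p_2$ the condition ``${p_1}_*\bigl(\pi_1(\Gamma_1)\bigr)$ is length-isomorphic to ${p_2}_*\bigl(\pi_1(\Gamma_2)\bigr)$'' is equivalent to ``$\Gamma_1\iso\Gamma_2$'', and the latter does not involve $p_1,p_2$, so the former has the same truth value for all choices.

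The step I expect to be the main obstacle is the first paragraph: making the covering-theoretic dictionary precise in the formalism that allows degenerate loops, and in particular verifying carefully the correspondence between coverings onto $R_{d,1}$ and pairs consisting of a perfect matching together with a \factorization{2} of its complement --- this is exactly the point at which the ``no degenerate loop'' hypothesis does genuine work. The rest is bookkeeping: matching ``degree of $\G$'' with ``regularity of $\Gamma$'', and tracking the harmless conjugacy ambiguity coming from the choice of base vertex.
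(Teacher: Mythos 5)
Your proposal is correct and follows essentially the same route as the paper, which presents Theorem \ref{ThmRigidity} as a direct translation of Theorem \ref{KeyTheorem} via the dictionary between coverings onto $R_d$ (resp.\ $R_{d,1}$) and Schreier structures over $\bigast_d\Z$ (resp.\ $(\bigast_d\Z)\ast\ZZ$). Your write-up is in fact more careful than the paper's on the basepoint/conjugacy bookkeeping and on the ``moreover'' clause, but the underlying argument is the same.
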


To prove Theorem \ref{KeyTheorem}, we need to prove two implications.
The first one is easy and proven in the next proposition. 
The second one is a little harder and is the subject of Proposition \ref{PropConverse}.
\begin{proposition}\label{PropTransitive}
Let $\G_i$, $\h_i$, $X_i$ and $\Gamma_i$ be as in Theorem \ref{KeyTheorem}, for $i=1,2$.
Recall that the graph $\Gamma_i$ is naturally a rooted graph, with root (the vertex) $\h_i$.
Suppose that there exists an isomorphism $\beta\colon\Gamma_1\to\Gamma_2$ such that $\beta(\h_1)=\h_2$, then $\h_1$ and $\h_2$ are length-isomorphic and $\G_1$ and $\G_2$ have same degree.

Moreover, if $\beta$ preserves labeling, then $\G_1=\G_2$ and $\h_1=\h_2$.
\end{proposition}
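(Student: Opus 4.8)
The plan is to transport the graph isomorphism $\beta$ into algebra by means of Lemma \ref{lemmaPathWord}. The assertion about degrees is immediate: a graph isomorphism preserves vertex degrees, and by Definition \ref{DefDegree} the degree of $\G_i$ is the common degree of the vertices of $\Gamma_i$, so the existence of $\beta$ forces $\G_1$ and $\G_2$ to have the same degree (whether finite or infinite).

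For the length-isomorphism I would take the roots $v_i=\h_i$ and apply Lemma \ref{lemmaPathWord} with $g=1$: the closed reduced paths starting at $v_i$ are in bijection with the elements of $\h_i$, and this bijection is length-preserving, since a reduced path with $n$ edges spells a reduced word of length $n$, and in a group of the form \eqref{group} a reduced word is a geodesic, so the element it represents has length exactly $n$. Because $\beta$ is a graph isomorphism it is a bijection on edges that commutes with the inversion $e\mapsto\bar e$, hence it carries reduced paths to reduced paths and, since $\beta(v_1)=v_2$, closed reduced paths at $v_1$ to closed reduced paths at $v_2$, obviously without changing the number of edges. Composing the three bijections yields a length-preserving bijection $\alpha\colon\h_1\to\h_2$.

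The point that needs care --- and the one I expect to be the main obstacle --- is that $\alpha$ is a group homomorphism, not merely a bijection. For this I would observe that on each graph the closed reduced paths at the root, under the operation ``concatenate, then freely reduce'', form a group, and that under the bijection of Lemma \ref{lemmaPathWord} this operation is exactly multiplication in $\h_i$; this rests on the feature of Schreier graphs already exploited in the proof of that lemma, namely that from any vertex there is at most one edge with a given label, so that a cancellation $xx^{-1}$ in the label word of a path forces a backtracking $e\bar e$ in the path itself, and path reduction therefore mirrors word reduction in $\G_i$. Now $\beta$ commutes with concatenation by definition, and since it sends a subpath $e\bar e$ to the reducible subpath $\beta(e)\overline{\beta(e)}$ and is bijective on edges it also commutes with free reduction of paths; hence $\beta$ restricts to an isomorphism between these two ``path groups'', so $\alpha$ is a group isomorphism, which proves the first assertion.

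For the ``moreover'' part, assume $\beta$ preserves labels. An $X$-isomorphism can only exist when the two graphs are labeled over a common alphabet, and since the involution on the labels and the set of order-two generators are read off directly from the graph, the group of the form \eqref{group} that they determine is the same for both, i.e.\ $\G_1=\G_2=:\G$. To get $\h_1=\h_2$ I would rerun the path argument while tracking labels: if $h\in\h_1$ and $w$ is the reduced word representing it, then by Lemma \ref{lemmaPathWord} the reduced path from $v_1$ spelling $w$ is closed, and since $\beta$ preserves roots and labels it sends this path to the reduced path from $v_2$ spelling the same word $w$, which is again closed, so $h=w\in\h_2$. This gives $\h_1\subseteq\h_2$, and the same argument applied to $\beta^{-1}$ gives the reverse inclusion, hence $\h_1=\h_2$.
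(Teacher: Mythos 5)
Your proposal is correct and follows essentially the same route as the paper: identify $\h_i$ with the closed reduced paths at the root via Lemma \ref{lemmaPathWord}, transport them through $\beta$, and check that the resulting bijection $\alpha$ is a length-preserving group isomorphism, with the labeled case handled by tracking labels. You merely spell out a detail the paper elides (that concatenation followed by free reduction of paths mirrors multiplication in $\h_i$, and that $\beta$ commutes with free reduction), which is a welcome but not substantively different addition.
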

\begin{proof}
 The graphs $\Gamma_1$ and $\Gamma_2$ being isomorphic, they have same degree.
 Hence, the groups $\G_1$ and $\G_2$ have same degree too.

  Now, pick an element $h$ of $\h_1$.
  By Lemma \ref{lemmaPathWord}, $h$ is represented by a closed path $p$ with base-point $\h_1$.
  If we apply $\beta$ we have a closed path $\beta(p)$ with base-point $\h_2$.
  Define a map from $\h_1$ to $\G_2$ by 
  \[\alpha(h)\defegal\textnormal{label in } \Gamma_2 \textnormal{ of }\beta(p).\]
  Since $\beta$ induces a bijection between closed paths with base-point $\h_1$ and closed paths with base-point $\h_2$, $\alpha$ is a bijection between $\h_1$ and $\h_2$.
  Moreover, $\alpha(1)=1$ and $\alpha(h^{-1})=\alpha(h)^{-1}$ (the path is read backward).
  We also have $\alpha(h_1h_2)=\alpha(h_1)\alpha(h_2)$ (the paths are read one after the other).
  This proves that $\alpha$ is a group isomorphism between $\h_1$ and $\h_2$.
  The fact that $\alpha$ preserves lengths is trivial.
  
Now, suppose that $\beta$ preserves labelings.
In this case, we immediately have $\G_1=\G_2$ and $\alpha=\id$.
\end{proof}

\begin{example}
The Petersen graph is $3$\tiret regular and hence can be seen has a Schreier graph: $\Gamma=\Schrei{\Z*\Z/2\Z}{H}{X}$, see Figure \ref{ExempleProp}.
It is a well-known fact that it is transitive, but not a Cayley graph.
Now, let us denote by $\h$ (resp. $\group M$) the group of labels of closed reduced paths based at $v_1$ (resp. $w_1$) in Figure \ref{ExempleProp}.
We have $\group M=a\h a$.
The element $xax^{-2}a$ belongs to $\h$ but not to $\group M$, therefore $\h$ and $\group M$ are not equal and both are not normal.
This means that there exists no $X$-automorphism of $\Gamma$ sending $v_1$ to $w_1$.
But there exists an automorphism $\beta$ that does the job.
And therefore there exists an isomorphism $\alpha\colon \h\to\group M$ that preserves lengths.
We want to compute $\alpha(xax^{-2}a)$.
The automorphism $\beta$ is given by:
\begin{align*}
v_1&\mapsto w_1 & w_1&\mapsto v_1\\
v_2&\mapsto w_3 & w_2&\mapsto v_3\\
v_3&\mapsto w_5 & w_3&\mapsto v_5\\
v_4&\mapsto w_2 & w_4&\mapsto v_2\\
v_5&\mapsto w_4 & w_5&\mapsto v_4.
\end{align*}
For two adjacent edges $a$ and $b$ in $\Gamma$, let us denote the unique edge from $a$ to $b$ by $[ab]$.
Then $xax^{-2}a\in\h$ corresponds to the path $[v_1v_3][v_3w_3][w_3w_2][w_2w_1][w_1v_1]$.
This path is sent by $\beta$ to the path $[w_1w_5][w_5v_5][v_5v_3][v_3v_1][v_11_1]$, which has label $x^{-1}ax^{-2}a\in\group M$.
Therefore, $\alpha(xax^{-2}a)=x^{-1}ax^{-2}a$.

\begin{figure}[!h]
\centering
\begin{tikzpicture}[>=stealth]
\SetVertexMath
\Vertices[LabelOut,Lpos=90,unit=2]{circle}{v_1,v_2,v_3,v_4,v_5}
\Vertices[LabelOut,unit=3]{circle}{w_1,w_2,w_3,w_4,w_5}
\Edges[label=$x$,style={->}](w_1,w_2,w_3,w_4,w_5,w_1)
\Edges[label=$x$,style={->}](v_1,v_3,v_5,v_2,v_4,v_1)
\foreach \n in {1,...,5}{
	\Edge[label=$a$](v_\n)(w_\n)
}
\end{tikzpicture}
\caption{The Petersen graph viewed as a Schreier graph on $\presentation{x,a}{a^2}\iso\Z*\Z/2\Z$.}
\label{ExempleProp}
\end{figure}
\end{example}

We are going to prove the converse of Proposition \ref{PropTransitive}.
Namely, that if $\h_1$ and $\h_2$ are length-isomorphic by an isomorphism $\alpha$, then there exists an isomorphism between their Schreier graphs that preserves roots.
For that, we first extend $\alpha$ to a bijection (not a group homomorphism) from $\G_1$ to $\G_2$ and see that it is possible to find such an extension with good properties.
Then we will use such an extension to find an isomorphism $\beta$ from $\Gamma_1$ to $\Gamma_2$ such that $\beta(\h_1)=\h_2$ (as vertices).

\begin{lemma}\label{LemmaExtension}
 Let $\G_1$ and $\G_2$ be two groups with the same degree, and $\h_i$ be any subgroup of $\G_i$ for $i=1,2$.
 Then, every isomorphism $\alpha\colon \h_1\to \h_2$ which preserves lengths can be extended to a bijection $\gamma\colon \G_1\to \G_2$ such that $\gamma$ preserves lengths and initial segments.
 That is: for every $f$, $g\in \G_1$ such that $fg$ is reduced
 (i.e. $\abs{fg}_{X_1}=\abs{f}_{X_1}+\abs{g}_{X_1}$),
 there exists $w$ such that $\gamma(fg)=\gamma(f)w$ with $\abs w_{X_2}=\abs{\gamma(g)}_{X_2}$.
\end{lemma}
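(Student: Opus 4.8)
The plan is to carry everything out inside the Cayley graphs $T_i\defegal\Cayley{G_i}{X_i}$. By Lemma~\ref{lemmaPathWord} applied to the trivial subgroup, the only closed reduced path of $T_i$ is the constant one, so $T_i$ is a tree; it is regular of degree $d$ (the common degree of $\G_1$ and $\G_2$), the root $1$ has $d$ children while every other vertex has $d-1$ children, and $\abs{g}_{X_i}$ is the distance from $1$ to $g$ in $T_i$. Under this dictionary the map $\gamma$ we want is exactly a bijection $T_1\to T_2$ preserving the distance to the root (``preserving lengths'') and the ancestor, i.e.\ prefix, relation (``preserving initial segments''): indeed, if $fg$ is reduced then $f$ lies on the geodesic from $1$ to $fg$, so $\gamma(f)$ lies on the geodesic from $1$ to $\gamma(fg)$, hence $\gamma(fg)=\gamma(f)w$, and comparing lengths gives $\abs{w}_{X_2}=\abs{g}_{X_1}=\abs{\gamma(g)}_{X_2}$. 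I also record at the outset that $\alpha$ preserves distances in these trees: since $d_{T_i}(g,g')=\abs{g^{-1}g'}_{X_i}$ and $h^{-1}h'\in\h_1$ whenever $h,h'\in\h_1$, the hypothesis that $\alpha$ preserves lengths gives $d_{T_2}(\alpha(h),\alpha(h'))=\abs{\alpha(h^{-1}h')}_{X_2}=\abs{h^{-1}h'}_{X_1}=d_{T_1}(h,h')$.

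First I would build $\gamma$ on the subtree $\Sigma_i\subseteq T_i$ spanned by $\h_i\cup\{1\}$, that is, the union of the geodesics from $1$ to the elements of $\h_i$ (note $1\in\h_i$, so $1\in\Sigma_i$). Writing $h\wedge h'$ for the longest common prefix --- i.e.\ the meet in $T_1$ --- of $h,h'\in\h_1$, and using the tree identity $d_{T_1}(h,h')=\abs{h}_{X_1}+\abs{h'}_{X_1}-2\abs{h\wedge h'}_{X_1}$ together with the fact that $\alpha$ preserves both word length and tree distance, I get $\abs{\alpha(h)\wedge\alpha(h')}_{X_2}=\abs{h\wedge h'}_{X_1}$. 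This lets me set $\gamma(u)$, for $u\in\Sigma_1$, equal to the prefix of $\alpha(h)$ of length $\abs{u}_{X_1}$, where $h$ is any element of $\h_1$ having $u$ as a prefix; the choice of $h$ is irrelevant, since for two such $h,h'$ one has $\abs{u}_{X_1}\le\abs{h\wedge h'}_{X_1}=\abs{\alpha(h)\wedge\alpha(h')}_{X_2}$, so $\alpha(h)$ and $\alpha(h')$ share their prefix of that length. Running the same argument with $\alpha^{-1}$ shows $\gamma$ restricts to a bijection $\Sigma_1\to\Sigma_2$; it plainly preserves distance to the root and the prefix relation, and it extends $\alpha$ (take $h=u$ when $u\in\h_1$, and $\gamma(1)=\alpha(1)=1$). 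I expect this step --- establishing the meet\tiret preservation identity and deducing from it that $\gamma|_{\Sigma_1}$ is well defined, injective, and onto $\Sigma_2$ --- to be the heart of the argument.

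It remains to extend $\gamma$ over the rest of the trees. Since $\Sigma_i$ is a subtree of $T_i$ containing the root, $T_i\setminus\Sigma_i$ is a disjoint union of rooted subtrees, each joined to $\Sigma_i$ at one vertex by one edge; if $w$ is the top vertex of such a subtree then $w\neq 1$, so $w$ has $d-1$ children in $T_i$, and no descendant of $w$ lies in $\Sigma_i$, whence the subtree is isomorphic to the canonical rooted tree $\Delta$ in which every vertex, the root included, has exactly $d-1$ children. For $u\in\Sigma_1$, the number of subtrees hanging at $u$ equals the number of children of $u$ in $T_1$ minus the number of children of $u$ in $\Sigma_1$; both terms are unchanged on passing to $\gamma(u)$ --- the first because $u$ and $\gamma(u)$ are equidistant from their roots while $\G_1,\G_2$ share the degree $d$, the second because $\gamma$ restricts to an isomorphism of rooted trees $\Sigma_1\to\Sigma_2$ --- so the families of subtrees hanging at $u$ and at $\gamma(u)$ have the same cardinality. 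Choosing, for each $u\in\Sigma_1$, a bijection between these two families, and for each matched pair an isomorphism $\Delta\to\Delta$ (using the axiom of choice if $d$ is infinite), and gluing these data to $\gamma|_{\Sigma_1}$, produces a bijection $\gamma\colon T_1\to T_2$. That $\gamma$ preserves the distance to the root and the ancestor relation is immediate on each $\Sigma_i$ and inside each hanging subtree, and the one mixed case --- an ancestor of a vertex in a subtree hanging at $u$ is either in that same subtree, or is $u$, or is an ancestor of $u$ in $\Sigma_1$ --- is routine. By the dictionary of the first paragraph, this is exactly the assertion that $\gamma$ preserves lengths and initial segments.
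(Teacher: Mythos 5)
Your argument is correct and is essentially the paper's proof in tree language: your map on the prefix-closed subtree $\Sigma_1$ spanned by $\h_1$, justified by the identity $\abs{h\wedge h'}=\abs{\alpha(h)\wedge\alpha(h')}$, is exactly the paper's definition of $\gamma$ on the set $C$ of initial segments of elements of $\h_1$ together with its well-definedness computation, and your wholesale matching of the $(d-1)$-regular subtrees hanging off $\Sigma_1$ repackages the paper's length-by-length induction on $D=\G_1\setminus C$, where the free directions $E$ and $F$ at corresponding vertices are matched by comparing their complements $\bar E$ and $\bar F$. The only caveat --- shared with the paper's own proof and harmless in the locally finite setting --- is that both counting arguments (your ``children of $u$ in $T_1$ minus children of $u$ in $\Sigma_1$'' and the paper's passage from a bijection $\bar E\to\bar F$ to one $E\to F$) rely on cardinal subtraction and therefore really require the common degree $d$ to be finite.
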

\begin{proof}
 Clearly, $\alpha$ preserves lengths and initial segments if we restrict it to $f,g\in \h_1 $.
 So let $\gamma\restreinta_{\h_1}\defegal\alpha$.
 We are now going to look at the set of initial segments of $\h_1$:
   \[
   C\defegal \enstq{f\in \G}{\exists w\in \G: fw\in \h_1 \tn{ and } fw \tn{ is reduced}}.
   \]
Let $c\in C$ be an initial segment of length $n$ of $h\in \h_1$.
Define $\gamma(c)$ as the initial segment of length $n$ of $\gamma(h)=\alpha(h)$.
We need to check that $\gamma(c)$ is well-defined.
Firstly, $cw$ is reduced and $h$ and $\gamma(h)$ are of length at least $n$, so it is possible to choose an initial segment of length $n$ of $\gamma(h)$.
Secondly, we need to check that $\gamma(c)$ does not depend on the particular choice of $h$.
Let $h_1$ and $h_2$ be elements of $\h_1$ and let $c$ be their maximal common initial segment. Then, if $c$ is of length $n$:
\begin{align*}
 \abs{h_1}_1+\abs{h_2}_1-2n&=\abs{h_1^{-1}h_2}_1& c \tn{ is maximal}\\
 &=\abs{\alpha(h_1^{-1}h_2)}_2&\alpha \tn{ preserves lengths}\\
 &=\abs{\alpha(h_1)^{-1}\alpha(h_2)}_2& \alpha \tn{ is a homomorphism}\\
 &=\abs{\alpha(h_1)^{-1}}_2+\abs{\alpha(h_2)}_2-2n'& \\
 &=\abs{h_1}_1+\abs{h_2}_1-2n',
\end{align*}
where $n'$ is the length of the maximal initial segment common to $\alpha(h_1)$ and $\alpha(h_2)$, and $\abs \cdot_i$ is short for $\abs \cdot_{X_i}$.
So $n=n'$, hence $\gamma(c)$ does not depend on the choice of $h=cw$.
Moreover, it is trivial that for $c_1,c_2\in C$, if $c_1$ is an initial segment of $c_2$, then $\gamma(c_1)$ is an initial segment of $\gamma(c_2)$.
We have thus a bijection between $C$ and $\gamma(C)$ which preserves lengths and initial segments.
The groups $\G_1$ and $\G_2$ having same degree, they are length-isomorphic.
This induces a bijection which preserves lengths:
  \[
  \gamma'\colon D\defegal \G_1\setminus C\longrightarrow \G_2\setminus \gamma(C).
  \]
We now need to define $\gamma$ on $D$.
The set $C$ being closed under the operation ``initial segment'', no elements of $D$ are initial segments of elements of $C$.
We can thus define $\gamma$ on $D$ from the ``bottom''.
Let $D_n$ be the set of elements of $D$ of length $n$ and let $n_0$ be the smallest integer such that $D_{n_0}$ is non-empty --- it is also the smallest integer such that $\gamma'(D_{n_0})$ is non-empty.
Let $d\in D_{n_0}$.
By minimality of $n_0$, there exists $c\in C$ and $x\in X_1^\pm$ such that $d=cx$ is reduced.
Similarly, for $d'\in \gamma'(D_{n_0})$ there exists $c'\in C$ and $y\in X_2^\pm$ such that $d'=\gamma'(c')y$ is reduced.

We are now going to prove that the following two sets are in bijection:
 \begin{align*}
   E&\defegal\enstq{x\in X_1^\pm}{cx\in D_{n_0}}\\
   F&\defegal\enstq{y\in X_2^\pm}{\gamma(c)y\in \gamma'(D_{n_0})}.
 \end{align*}
To show that, we are going to prove that their complements $\bar E\subset {X_1}^\pm$ and $\bar F\subset {X_2}^\pm$ are in bijection.
 These complements are exactly 
 \begin{align*}
   \bar E&=\enstq{x\in X_1^\pm}{cx\in C}\\
   \bar F&=\enstq{y\in X_2^\pm}{\gamma(c)y\in \gamma(C)}.
 \end{align*}
For $x\in \bar E$, we have $\gamma(cx)=\gamma(c)y$ for a unique $y\in X_2^\pm$.
This defines a map $\theta\colon \bar E \to \bar F$ by $\theta(x)=y$.
This map is injective because $\theta(x)=\theta(x')$ if and only if $\gamma(cx)=\gamma(cx')$ and so if and only if $x=x'$.
On the other hand, $\theta$ is also surjective.
Indeed, if $y$ is in $\bar F$, then $\gamma(c)y$ is an element of $\gamma(C)$.
Hence, there exists $c'\in C$ such that $\gamma(c')=\gamma(c)y$.
But $\gamma$ preserves initial segments on $C$, thus $c$ is an initial segment of $c'$, hence $c'=cx$ for some $x$.
This finishes the proof of the existence of a bijection between $\bar E$ and $\bar F$ and therefore between $E$ and $F$.

This bijection allows us to define $\gamma(d)=\gamma(cx)\defegal\gamma(c)y$ for $d\in D_{n_0}$.
We have thus extended $\gamma$ to $C\cup D_{n_0}$ such that $\gamma$ is a bijection which preserves lengths and initial segments.
Finally, we put $C_0\defegal C$ and conclude by induction on $C_i\defegal C_{i-1}\cup D_{n_i}$ and $D_{n_i}$ where $n_i$ is the smallest integer greater than $n_{i-1}$ such that $D_{n_i}$ is non-empty.
\end{proof}
\begin{lemma}\label{LemmaProperties}
The bijection $\gamma\colon \G_1\to\G_2$ of the preceding lemma has the following properties:
\begin{enumerate}
 \item For all $fg^{-1}\in \h_1$ reduced, $\gamma(fg^{-1})=\gamma(f)\gamma(g)^{-1}$;
 \item $\gamma^{-1}$ preserves lengths and initial segments;
 \item For all $fg^{-1}\in \gamma(\h_1)=\h_2$ reduced, $\gamma^{-1}(fg^{-1})=\gamma^{-1}(f)\gamma^{-1}(g)^{-1}$.
\end{enumerate}
\end{lemma}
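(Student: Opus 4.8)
The plan is to establish the three items in the order (1), (2), (3), since item (3) reduces to item (2) together with the argument of item (1).

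For item (1), the point is simply to unwind the definition of $\gamma$ on the set $C$ of initial segments of $\h_1$. Given $fg^{-1}\in\h_1$ reduced, set $h\defegal fg^{-1}$; then $f$ is an initial segment of $h\in\h_1$ and, since $\h_1$ is a subgroup, $g$ is an initial segment of $h^{-1}=gf^{-1}\in\h_1$, so $f,g\in C$. By the very construction of $\gamma$ on $C$ in Lemma \ref{LemmaExtension}, $\gamma(f)$ is the initial segment of $\gamma(h)=\alpha(h)$ of length $\abs f_{X_1}$, and $\gamma(g)$ is the initial segment of $\gamma(h^{-1})=\alpha(h)^{-1}$ of length $\abs g_{X_1}$. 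As $\alpha$ preserves lengths we have $\abs{\alpha(h)}_{X_2}=\abs f_{X_1}+\abs g_{X_1}$, so we may split the reduced word $\alpha(h)=uv$ with $\abs u_{X_2}=\abs f_{X_1}$, which identifies $\gamma(f)=u$; reading $\alpha(h)^{-1}=v^{-1}u^{-1}$ (again reduced) identifies $\gamma(g)=v^{-1}$. Hence $\gamma(f)\gamma(g)^{-1}=uv=\alpha(h)=\gamma(fg^{-1})$. The only thing to verify carefully is that $f,g$ land in $C$, which is immediate from the definition of $C$ and the fact that the relevant products are reduced.

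Item (2) is where the real work lies, and I expect it to be the main obstacle: the bijection $\gamma$ was built asymmetrically in Lemma \ref{LemmaExtension} — prescribed on $C$ and then extended level by level over $\G_1\setminus C$ — so there is no formal reason visible from the construction that $\gamma^{-1}$ should enjoy the same properties. Preservation of lengths by $\gamma^{-1}$ is immediate. For initial segments I would first reformulate the property as: whenever $b'$ is an initial segment of $a'$ in $\G_2$, $\gamma^{-1}(b')$ is an initial segment of $\gamma^{-1}(a')$ (the length bookkeeping in the statement is then automatic). To prove this, let $c$ be the initial segment of $\gamma^{-1}(a')$ of length $\abs{b'}_{X_2}$; it exists and is unique because in a group of the form \eqref{group} reduced words are unique normal forms (Lemma \ref{lemmaPathWord}), so an element has exactly one initial segment of each length up to its own. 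Since $\gamma$ preserves initial segments, $\gamma(c)$ is an initial segment of $a'$, and $\abs{\gamma(c)}_{X_2}=\abs c_{X_1}=\abs{b'}_{X_2}$; by the same uniqueness $\gamma(c)=b'$, i.e. $c=\gamma^{-1}(b')$. Conceptually this is the statement that $\Cayley{G}{X}$ is a tree, so a length-preserving bijection that respects the initial-segment (ancestor) order in one direction is forced to be an isomorphism of rooted trees and therefore respects it in both directions.

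Finally, item (3) follows formally. By item (2), $\gamma^{-1}\colon\G_2\to\G_1$ is a bijection preserving lengths and initial segments which restricts on $\h_2$ to the length-preserving isomorphism $\alpha^{-1}\colon\h_2\to\h_1$; moreover the reformulation used in item (2) shows that $\gamma^{-1}$ agrees, on initial segments of $\h_2$, with ``the initial segment of $\gamma^{-1}(h')$ of the appropriate length''. Thus $\gamma^{-1}$ satisfies exactly the hypotheses that were used for $\gamma$ in item (1), with the roles of $(\G_1,\h_1,\alpha)$ and $(\G_2,\h_2,\alpha^{-1})$ interchanged, and applying the argument of item (1) verbatim to $\gamma^{-1}$ yields item (3).
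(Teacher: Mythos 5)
Your proof is correct, and items (2) and (3) follow essentially the same route as the paper: for (2) you take the initial segment of $\gamma^{-1}(a')$ of the right length, push it forward by $\gamma$, and invoke uniqueness of reduced normal forms, which is exactly the paper's argument (phrased there as splitting $\gamma^{-1}(fg)=f'g'$ and applying $\gamma$); for (3) both you and the paper simply rerun the argument of (1) for $\gamma^{-1}$, using (2). The one genuine difference is in item (1). The paper treats $\gamma$ as a black box: it uses only the stated conclusion of Lemma \ref{LemmaExtension} (preservation of lengths and initial segments) together with the fact that $\gamma\restreinta_{\h_1}=\alpha$ is a homomorphism, writing $e=\gamma(fg^{-1})\gamma(gf^{-1})=\gamma(f)w\cdot\gamma(g)w'$ and forcing $w=\gamma(g)^{-1}$ by a cancellation-and-length count in the free product. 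You instead open up the construction: you observe that $f$ and $g$ both lie in the set $C$ of initial segments of $\h_1$ (via $h=fg^{-1}$ and $h^{-1}=gf^{-1}$ respectively) and read off $\gamma(f)=u$, $\gamma(g)=v^{-1}$ directly from the definition of $\gamma$ on $C$, where $\alpha(h)=uv$ is the reduced splitting. Both arguments are sound; the paper's has the advantage of depending only on the interface of Lemma \ref{LemmaExtension} and so would survive any other construction of $\gamma$ with the same properties, while yours is more transparent about why the identity holds and avoids the slightly delicate ``the only reductions possible are between $w$ and $\gamma(g)$'' step.
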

\begin{proof}
 If $fg^{-1}$ is reduced and is an element of $\h_1$, the same is true for its inverse $gf^{-1}$.
 But then, there exists $w$ and $w'$ in $\h_2$ such that $\gamma(fg^{-1})=\gamma(f)w$ and $\gamma(gf^{-1})=\gamma(g)w'$ are reduced.
 The bijection $\gamma$ being a group homomorphism on $\h_1$, we have:
    \[
      e=\gamma(e)=\gamma(fg^{-1}gf^{-1})=\gamma(fg^{-1})\gamma(gf^{-1})=\gamma(f)w\cdot\gamma(g)w'.
    \]
The only reductions possible are between $w$ and $\gamma(g)$, which are of the same length.
Hence $w=\gamma(g)^{-1}$, which is what we wanted to prove.

For the second part, it is trivial that $\gamma^{-1}$ preserves lengths.
For the initial segments part, let $fg\in\h_2$ be reduced.
Then $\gamma^{-1}(fg)=f'g'$ is reduced, where $\abs{f'}_1=\abs{\gamma^{-1}(f)}_1=\abs{f}_2$ (and analogously for $\abs{g'}_1$).
If we apply $\gamma$ to both sides of the equality, we have $fg=\gamma(f'g')=\gamma(f')w$ reduced, for some $w\in\h_2$.
We conclude that $f=\gamma(f')$.
Therefore, $\gamma^{-1}(fg)=\gamma^{-1}(f)g'$ is reduced.

The last point can be proved in the same way as for $\gamma$, using the fact that the restriction of $\gamma^{-1}$ to $\gamma(\h_1)$ is a group homomorphism.
\end{proof}

\begin{proposition}\label{PropConverse}
Let $\G_i$, $\h_i$, $X_i$ and $\Gamma_i$ be as in Theorem \ref{KeyTheorem}.
If $\G_1$ and $\G_2$ have same degree and $\h_1$ and $\h_2$ are length-isomorphic, then there exists a graph isomorphism $\beta\colon\Gamma_1\to\Gamma_2$ that respects roots
(i.e. $\beta$ maps the vertex $\h_1$ to the vertex $\h_2$).
\end{proposition}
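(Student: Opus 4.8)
The plan is to turn the bijection $\gamma\colon\G_1\to\G_2$ furnished by Lemma \ref{LemmaExtension} into a graph isomorphism. Recall from Lemma \ref{lemmaPathWord} that the reduced paths of $\Gamma_i$ issued from its root $\h_i$ are in bijection with $\G_i$ (the one with label $g$ ending at the vertex $\h_i g$), the closed ones corresponding to $\h_i$; equivalently, $\Cayley{G_i}{X_i}$ is a tree covering $\Gamma_i$ through $q_i\colon g\mapsto\h_i g$, on which $\h_i$ acts by left translation as the group of deck transformations. The content of ``$\gamma$ preserves lengths and initial segments'' is precisely that $\gamma$ is an isomorphism of the rooted trees $\Cayley{G_1}{X_1}\to\Cayley{G_2}{X_2}$ fixing $1$: writing a given edge as $\{g,gx\}$ with $gx$ reduced, Lemma \ref{LemmaExtension} applied to $f=g$ and the single generator $x$ gives $\gamma(gx)=\gamma(g)y$ with $y\in X_2^\pm$ a single generator, so $\{\gamma(g),\gamma(gx)\}$ is again an edge; running the same argument with $\gamma^{-1}$ (Lemma \ref{LemmaProperties}(2)) shows $\gamma$ is a graph isomorphism.

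I would then set $\beta(\h_1 g)\defegal\h_2\gamma(g)$ on vertices and let $\beta$ act on edges by transport along $\gamma$. Since $\gamma(1)=\alpha(1)=1$, the root goes to the root. The only point that is not formal is that $\beta$ is well defined, i.e. that $\gamma$ carries each right $\h_1$\tiret coset into a right $\h_2$\tiret coset:
\[
g'g^{-1}\in\h_1\ \Longrightarrow\ \gamma(g')\gamma(g)^{-1}\in\h_2 .
\]
Granting this, $\beta$ is a graph morphism because it is locally $\gamma$, which preserves incidence and inversion of edges; it is bijective with inverse induced by $\gamma^{-1}$ (well defined by the symmetric statement, using Lemma \ref{LemmaProperties}(2)--(3)); and it respects roots. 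So $\beta$ is the desired isomorphism.

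For the displayed implication I would argue in two steps. On the set $C$ of initial segments of elements of $\h_1$ (the set introduced in the proof of Lemma \ref{LemmaExtension}), the value of $\gamma$ is forced by $\alpha$, and a short length computation shows that $C$ is stable under left translation by $\h_1$ and that $\gamma(hc)=\alpha(h)\gamma(c)$ for all $h\in\h_1$, $c\in C$; hence the implication holds whenever $g,g'\in C$. For arbitrary $g,g'$ with $g'g^{-1}\in\h_1$, write $g=fs$, $g'=f's$ in reduced form with $s$ their longest common suffix, so $g'g^{-1}=f'f^{-1}$ is reduced and lies in $\h_1$. Then Lemma \ref{LemmaProperties}(1) gives $\gamma(f'f^{-1})=\gamma(f')\gamma(f)^{-1}\in\h_2$, while Lemma \ref{LemmaExtension} gives reduced factorizations $\gamma(g)=\gamma(f)w$ and $\gamma(g')=\gamma(f')w'$ with $\abs w_{X_2}=\abs{w'}_{X_2}=\abs s_{X_1}$. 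Thus $\gamma(g')\gamma(g)^{-1}=\gamma(f')\,w'w^{-1}\,\gamma(f)^{-1}$, and the implication comes down to the single equality $w=w'$: that $\gamma$ appends $s$ ``in the same way'' below $\gamma(f)$ and below $\gamma(f')$. Since $\G_1\setminus C$ is a union of right $\h_1$\tiret cosets, I would secure this by carrying out the bottom-up extension of Lemma \ref{LemmaExtension} one $\h_1$\tiret orbit at a time, choosing the direction bijections $E\leftrightarrow F$ so that they agree along each orbit --- equivalently, by strengthening Lemma \ref{LemmaExtension} so that the $\gamma$ it produces is $\h_1$\tiret equivariant from the start.

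The main obstacle is exactly this equivariance. One must verify that when $\h_1 c=\h_1 c'$ the combinatorics of the available directions at $c$, at $c'$, and below them, match up under $\alpha$, so that the choices in the extension can be synchronized, and that the length-isomorphism of $\G_1$ with $\G_2$ used to fill in the complement of $C$ is compatible with these choices. On $C$ there is no choice and equivariance is automatic; the bookkeeping that propagates it through the inductive extension is the substantive part of the argument, after which everything else --- $\beta$ a morphism, a bijection, root-preserving --- follows formally.
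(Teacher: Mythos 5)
You have correctly reproduced the paper's strategy --- extend $\alpha$ to the bijection $\gamma$ of Lemma \ref{LemmaExtension}, set $\beta(\h_1 g)\defegal\h_2\gamma(g)$, and transport edges --- and, more importantly, you have isolated the one step that is genuinely non-formal: the well-definedness of $\beta$, i.e.\ the implication $g'g^{-1}\in\h_1\Rightarrow\gamma(g')\gamma(g)^{-1}\in\h_2$. Lemma \ref{LemmaProperties}(1) only yields this when the product $g'g^{-1}$ is reduced, and your reduction of the general case to the equality $w=w'$ of the two appended suffixes is exactly where the difficulty lives. (The paper's own proof writes $\gamma(f)\gamma(g)^{-1}=\gamma(fg^{-1})$ for arbitrary $f,g$ in the same coset without comment, so you have put your finger on a point it glosses over.) Your treatment of the case $g,g'\in C$ is correct: there $\gamma$ is forced by $\alpha$, the set $C$ is the union of the geodesics $[1,h]$, $h\in\h_1$, in the tree $\cayl(\G_1,X_1^\pm)$, hence is stable under left translation by $\h_1$ since $[h,hh_0]\subseteq[h,1]\cup[1,hh_0]$, and equivariance follows by a length count.

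The gap is that on $D=\G_1\setminus C$ the needed equivariance is not ``bookkeeping that follows'': as Lemma \ref{LemmaExtension} stands, the bijections $E\to F$ are chosen independently at each base point $c$ and each level, and an unsynchronized choice really does destroy well-definedness. Take $\G_1=\G_2=\langle x,y\rangle$, $\h_1=\h_2=\langle x\rangle$, $\alpha=\id$; then $C=\langle x\rangle$, and the construction permits the simultaneous choices $\gamma(y)=y^{-1}$ (made at $c=1$) and $\gamma(xy)=xy$ (made at $c=x$). This $\gamma$ preserves lengths and initial segments and satisfies all of Lemma \ref{LemmaProperties}, yet $\h_1y=\h_1xy$ while $\h_2y^{-1}\neq\h_2xy$. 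So the strengthening you defer --- producing $\gamma$ with $\gamma(hg)=\alpha(h)\gamma(g)$ for all $h\in\h_1$, $g\in\G_1$ --- is not optional: it is the actual mathematical content of the proposition. It is also not the routine induction you sketch, because an $\h_1$\tiret orbit in $D$ meets infinitely many length levels (the orbit $\langle x\rangle y$ above contains elements of every length $\geq 1$), so the levels $D_{n_i}$ cannot simply be processed ``one orbit at a time''; one must instead choose, for each orbit, a representative (say one minimizing the distance to $C$ in the tree), define $\gamma$ there, propagate by $\alpha$, and then verify that length and initial-segment preservation survive the propagation. Until that equivariant extension is actually carried out, your argument is a correct plan with its central step missing.
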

\begin{proof}
 Let $\alpha \colon \h_1\to \h_2$ be an isomorphism which preserves lengths.
 We extend $\alpha$ to $\gamma \colon \G_1\to \G_2$ as in Lemma \ref{LemmaExtension}.
 We define $\beta$ on vertices by $\beta(\h_1 f)\defegal \h_2\gamma(f)$.
 It is trivial that $\beta(\h_1)=\h_2$.
 Moreover, $\beta$ is well-defined and injective.
 Indeed, $\h_1 f=\h_1 g$ if and only if $fg^{-1}$ is an element of $\h_1$.
 In the same way, $\h_2 \gamma(f)=\h_2 \gamma(g)$ if and only if $\gamma(f)\gamma(g)^{-1}=\gamma(fg^{-1})$ is an element of $\h_2=\gamma(\h_1)$.
 Hence, $\h_1 f=\h_1 g$ if and only if $\h_2 \gamma(f)=\h_2 \gamma(g)$.
 Finally, for every vertex $\h_2 h$ of $\Gamma_2$ ($h$ an element of $\G_2$), $\h_2 h=\h_2 \gamma(\gamma^{-1}(h))$ with $\gamma^{-1}(h)\in\G_1$.
 Hence, $\h_2 h=\beta(\h_1 \gamma^{-1}(h))$ and $\beta$ is surjective on vertices.
 
 Instead of describing $\beta$ explicitly on edges, we are going to show that for every pair of vertices $\h_1 f$ and $\h_1 g$, the edges between $\h_1 f$ and $\h_1 g$ are in bijection with the edges between $\beta(\h_1f)$ and $\beta(\h_1 g)$.
 Taking this bijection as a definition of $\beta$ on edges makes $\beta$ an isomorphism from $\Gamma_1$ to $\Gamma_2$.
 Firstly, suppose that $\h_1 f$ and $\h_1 g$ are joined by at least one edge, labeled by $x_0\in X_1^\pm$, such that $\h_1fx_0=\h_1g$.
 Then the set of all edges from $\h_1 f$ to $\h_1g$ is 
   \[
     A\defegal\enstq{x\in X_1^\pm}{\h_1 fx=\h_1 fx_0}=\enstq{x\in X_1^\pm}{fx_0x^{-1}f^{-1}\in \h_1}.
   \]
 On the other hand, we have $\beta(\h_1 fx_0)=\h_2 \gamma(fx_0)=\h_2 \gamma(f)y_0=\beta(\h_1 f)y_0$ for a unique $y_0\in X_2^\pm$.
 Thus, there is at least one edge from $\beta(\h_1 f)$ to $\beta(\h_1 g)$, labeled by $y_0$.
 The set of all edges from $\beta(\h_1 f)$ to $\beta(\h_1 g)$ is 
   \begin{align*}
     B&\defegal
    \enstq{y\in X_2^\pm}{\gamma(f)y_0y^{-1}\gamma(f)^{-1}\in \h_2}\\
     &\, \hspace{0.5pt} =\enstq{y\in X_2^\pm}{\gamma(fx_0)y^{-1}\gamma(f)^{-1}\in \h_2}.
   \end{align*}
 Take any $x$ in $A$.
  By Lemma \ref{LemmaProperties} we have $\gamma(fx_0x^{-1}f^{-1})=\gamma(fx_0x^{-1})\gamma(f)^{-1}$ and that there exists a unique $y\in X_2^{\pm}$ such that
$\gamma(fx_0x^{-1})=\gamma(fx_0)y^{-1}$.
 Moreover, this particular $y$ belongs to $B$, thus we have a map from $A$ to $B$ and we need to show that this map is bijective.
 The map is injective.
 Indeed, if $y=y'$, then $\gamma(fx_0x^{-1})=\gamma(fx_0x'^{-1})$ and ($\gamma$ is a bijection) thus $x=x'$.
 For the surjectivity, we know that for every $y\in X_2^\pm$ there is a unique $x\in X_1^{\pm}$ such that $\gamma(fx_0x^{-1})=\gamma(fx_0)y^{-1}$, so we only need to show that if $y$ belongs to $B$ then $x$ belongs to $A$.
 If $y$ is in $B$, we have that $\gamma(fx_0)y^{-1}\gamma(f)^{-1}$ belongs to $\h_2$.
 Then by Lemma \ref{LemmaProperties} we have $\gamma(fx_0)y^{-1}\gamma(f)^{-1}=\gamma(fx_0x^{-1}f^{-1})$.
 This implies that $fx_0x^{-1}f^{-1}$ belongs to $\h_1$ and finally that $x$ is in $A$.
 
 We now need to show that if $\h_1 f$ and $\h_1 g$ are not connected by any edge, then neither are $\beta(\h_1 f)$ and  $\beta(\h_1 g)$.
 But the same argument as before shows that if $\beta(\h_1 f)$ and  $\beta(\h_1 g)$ are connected by at least one edge, then $\h_1f$ and $\h_1g$ are connected by an edge.
 
 This concludes the existence of a bijection between edges from $\beta(\h_1 f)$ to  $\beta(\h_1 g)$ and edges from $\h_1 f$ to $\h_1 g$.
 Since this bijection preserves initial and terminal vertices, we can take it as the definition of $\beta$ on edges.
 Defining $\beta$ in such a way makes it an isomorphism from $\Gamma_1$ to $\Gamma_2$ that sends $\h_1$ on $\h_2$.
 
 Now, if $\G_1=\G_2$ and $\h_1=\h_2$, the existence of an $X$-automorphism between the two Schreier graphs is trivial.
 \end{proof}
 
 This finishes the prove of Theorem \ref{KeyTheorem}.
 
 Here are two easy applications of this theorem.
 \begin{corollary}\label{CorTransitivity}
 Suppose that $\Gamma\defegal\Schrei{G}{H}{X}$.
 Then the graph $\Gamma$ is transitive if and only if the subgroup $\h$ is length-transitive.
\end{corollary}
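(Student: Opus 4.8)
The plan is to read this off Theorem~\ref{KeyTheorem}. First I would observe that, since $\Aut(\Gamma)$ is a group, $\Gamma$ is vertex-transitive if and only if every vertex lies in the orbit of the root $\h$: if $\phi_v(\h)=v$ and $\phi_w(\h)=w$, then $\phi_w\circ\phi_v^{-1}$ sends $v$ to $w$. So it suffices to decide, for each $g\in\G$, whether there is an automorphism of $\Gamma$ carrying the vertex $\h$ to the vertex $\h g$.

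Second, I would identify $\Gamma$ re-rooted at $\h g$ with a Schreier graph in the standard form of Theorem~\ref{KeyTheorem}. Concretely, the map $\h x\mapsto(g^{-1}\h g)(g^{-1}x)$ is a well-defined bijection from the vertex set of $\Gamma$ onto the vertex set of $\schrei(\G,g^{-1}\h g,X^\pm)$; well-definedness and injectivity both follow from the equivalences $\h x=\h y\iff xy^{-1}\in\h\iff(g^{-1}x)(g^{-1}y)^{-1}\in g^{-1}\h g$, surjectivity is clear, it sends $\h g$ to the natural root $g^{-1}\h g$, and it commutes with right multiplication by each generator, hence preserves $X$-labeled edges. (Alternatively, Lemma~\ref{lemmaPathWord} already exhibits $g^{-1}\h g$ as the group of labels of closed reduced paths based at $\h g$, which makes this identification unsurprising.)

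Third, I would combine the two observations. An automorphism of $\Gamma$ sending $\h$ to $\h g$ is the same thing as a root-respecting graph isomorphism from $\Schrei{G}{H}{X}$ to $\schrei(\G,g^{-1}\h g,X^\pm)$. Both are Schreier graphs over the same group $\G$, so they trivially have the same degree, and Theorem~\ref{KeyTheorem} tells us that such an isomorphism exists if and only if $\h$ and $g^{-1}\h g$ are length-isomorphic. Therefore $\Gamma$ is transitive if and only if $\h$ is length-isomorphic to $g^{-1}\h g$ for every $g\in\G$, i.e.\ (Definition~\ref{LengthTrans}) if and only if $\h$ is length-transitive.

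I do not expect a genuine difficulty here: the statement is essentially a reformulation of the main theorem. The only steps that need a line or two of care are the re-rooting identification in the second paragraph --- checking that $\h x\mapsto(g^{-1}\h g)(g^{-1}x)$ is an isomorphism of labeled rooted graphs --- and the elementary reduction, in the first paragraph, of vertex-transitivity to transitivity ``out of the root''.
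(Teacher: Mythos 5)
Your proposal is correct and follows essentially the same route as the paper: reduce transitivity to moving the root to an arbitrary vertex $\h g$, identify $\Gamma$ re-rooted at $\h g$ with $\schrei(\G,g^{-1}\h g,X^\pm)$, and apply Theorem~\ref{KeyTheorem}. The only difference is that you spell out the re-rooting isomorphism explicitly, which the paper states without verification.
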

\begin{proof}
 The graph $\Gamma$ is transitive if and only if for all $g\in \G$ there exists an automorphism of the graph that sends the vertex $\h$ to the vertex $\h g$.
 But the graph $\Gamma$ rooted in $\h g$ is exactly the graph $\schrei(\G,g^{-1}\h g,X^\pm)$.
 \end{proof}
 \begin{corollary}
 For $\group A$ a group, $X$ a generating system and $K$ a subgroup and $\Gamma$ the corresponding Schreier graph, the number of $X$-orbits is $[\group A:N_{\group A}(\group K)]$, each $X$-orbit has $[N_{\group A}(\group K):\group K]$ elements and each orbit is a union of $X$-orbits.
 \end{corollary}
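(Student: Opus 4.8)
The plan is to study how the $X$\tiret automorphisms of $\Gamma\defegal\Schrei{A}{K}{X}$ act on the vertex set, which consists of the right cosets $\group Kg$, $g\in\group A$. The starting point is the observation recalled in Section~\ref{SectNotations}: since $\Gamma$ is connected and each vertex has exactly one outgoing edge with a prescribed label, an $X$\tiret automorphism is determined by the image of any single vertex, and one fixing a vertex is the identity. Hence the group of $X$\tiret automorphisms acts freely on the vertex set, so all $X$\tiret orbits have the same cardinality, and it suffices to compute the one containing the root $\group K$.

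To compute that orbit, I would prove that an $X$\tiret automorphism sends $\group K$ to $\group Kg$ if and only if $g\in N_{\group A}(\group K)$. For the forward direction, writing an arbitrary $h\in\group A$ as a word in $X^\pm$ and transporting it along edges forces any such automorphism to be given by $\group Kh\mapsto\group Kgh$; requiring this assignment to be well defined on right cosets is exactly the condition $g\group Kg^{-1}\subseteq\group K$, and imposing the same on its inverse yields $g\in N_{\group A}(\group K)$. For the converse, when $g$ normalizes $\group K$ the map $\group Kh\mapsto\group Kgh$ is visibly a well-defined, label-preserving bijection of $\Gamma$. (In the language of the proof of Corollary~\ref{CorTransitivity}, this amounts to saying that $\Gamma$ rooted at $\group Kg$ --- which is the Schreier graph of $g^{-1}\group Kg$ --- is $X$\tiret isomorphic, respecting roots, to $\Gamma$ rooted at $\group K$ precisely when $g^{-1}\group Kg=\group K$; I would nevertheless argue directly, since here $\group A$ is arbitrary rather than of the form~\eqref{group}.) It follows that the $X$\tiret orbit of $\group K$ equals $\enstq{\group Kg}{g\in N_{\group A}(\group K)}$, which has cardinality $[N_{\group A}(\group K):\group K]$ because $\group K\leq N_{\group A}(\group K)$.

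It then remains to assemble the pieces: the $[\group A:\group K]$ vertices of $\Gamma$ are partitioned into $X$\tiret orbits all of cardinality $[N_{\group A}(\group K):\group K]$, so there are $[\group A:\group K]\,/\,[N_{\group A}(\group K):\group K]=[\group A:N_{\group A}(\group K)]$ of them, by the tower law for indices (valid for infinite cardinals). The last assertion is immediate, since every $X$\tiret automorphism is in particular an automorphism of $\Gamma$, so each orbit of $\Aut(\Gamma)$ is a union of $X$\tiret orbits. I expect the only genuine work to lie in the forward direction of the claim in the second paragraph --- checking that an $X$\tiret automorphism with image $\group Kg$ of the root is forced to be the coset map $\group Kh\mapsto\group Kgh$, and that well-definedness of that map on right cosets is precisely the normalizer condition; everything else is bookkeeping.
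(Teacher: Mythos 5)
Your argument is sound and, at its core, rediscovers by hand what the paper gets from Theorem \ref{KeyTheorem}: two vertices $\group Kf$ and $\group Kg$ lie in the same $X$\tiret orbit if and only if $f^{-1}\group Kf=g^{-1}\group Kg$, equivalently $N_{\group A}(\group K)f=N_{\group A}(\group K)g$. (The paper lifts to a group $\G$ of the form \eqref{group} and invokes the rigidity statement for $X$\tiret isomorphisms; your direct verification that an $X$\tiret automorphism with $\beta(\group K)=\group Kg$ is forced to be $\group Kh\mapsto\group Kgh$, well defined precisely when $g$ normalizes $\group K$, is a legitimate and more self-contained substitute, and your free-action observation correctly shows all $X$\tiret orbits are equipotent.)

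The one step that does not survive scrutiny is the count of the number of $X$\tiret orbits by division. From ``a set of cardinality $[\group A:\group K]=[\group A:N_{\group A}(\group K)]\cdot[N_{\group A}(\group K):\group K]$ is partitioned into classes each of cardinality $[N_{\group A}(\group K):\group K]$'' you cannot conclude that the number of classes is $[\group A:N_{\group A}(\group K)]$ once $[N_{\group A}(\group K):\group K]$ is infinite: cardinal multiplication is not cancellative (a countable set partitioned into countable classes may have any number of classes from $1$ to $\aleph_0$). Since the corollary is stated for arbitrary $\group A$ and $\group K$, this case is not excluded. The fix is already contained in your second paragraph: your forced-form argument shows more generally that $\group Kf$ and $\group Kg$ are in the same $X$\tiret orbit if and only if $gf^{-1}\in N_{\group A}(\group K)$, so the $X$\tiret orbits are exactly the fibers of the surjection $\group Kf\mapsto N_{\group A}(\group K)f$ onto the right cosets of the normalizer, and the set of orbits is therefore in bijection with $N_{\group A}(\group K)\backslash\group A$, of cardinality $[\group A:N_{\group A}(\group K)]$, with no division needed. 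This is in effect how the paper counts the orbits.
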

 \begin{proof}
 We have $\group A=\G/\group N$ for some normal subgroup of $\G$ and $\group K$ corresponds to a $\group N\leq\h\leq\G$.
 Two vertices $\h f$ and $\h g$ are in the same $X$-orbit if and only if $f^{-1}\h f=g^{-1}\h g$.
 Therefore, the number of vertices in one orbit is $[N_{\group G}(\group H):\group H]=[N_{\group A}(\group K):\group K]$ and the number of orbits is $[\group A:N_{\group A}(\group K)]$.
Since an $X$-automorphism is an automorphism of the graph, the orbits are unions of $X$-orbits.
\end{proof}
 
 The following lemma comes to simplify the application of the transitivity criterion of Corollary \ref{CorTransitivity}.
 
\begin{lemma}
 Let $\G$, $\h$ and $X$ be as before.
 Then $\h$ is length-transitive if and only if for every $x\in X^\pm=X\cup X^{-1}$ the group $\h$ is length-isomorphic to $x^{-1}\h x$.
\end{lemma}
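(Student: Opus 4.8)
The plan is as follows. The ``only if'' direction is immediate from Definition \ref{LengthTrans}: a length-transitive subgroup is length-isomorphic to all of its conjugates, in particular to $x^{-1}\h x$ for each $x\in X^\pm$. So the work is entirely in the converse, and the strategy there is to pass to the Schreier graph $\Gamma\defegal\Schrei{G}{H}{X}$ and invoke Corollary \ref{CorTransitivity}: it suffices to show that $\Gamma$ is vertex-transitive.

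First I would translate the hypothesis into graph language. Fix $x\in X^\pm$. Applying Theorem \ref{KeyTheorem} with $\G_1=\G_2=\G$, $X_1=X_2=X$, $\h_1=\h$ and $\h_2=x^{-1}\h x$ (the degree condition being automatic), the length-isomorphism $\h\to x^{-1}\h x$ yields a root-respecting graph isomorphism $\Schrei{G}{H}{X}\to\schrei(\G,x^{-1}\h x,X^\pm)$. As recalled inside the proof of Corollary \ref{CorTransitivity}, the graph $\schrei(\G,x^{-1}\h x,X^\pm)$ is nothing but $\Gamma$ with its root moved to the vertex $\h x$; hence this isomorphism is an automorphism $\phi_x$ of the underlying unrooted graph $\Gamma$ with $\phi_x(\h)=\h x$.

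Next I would run a connectedness argument on the orbit $O\defegal\Aut(\Gamma)\cdot\h$. The key point is that $O$ is closed under adjacency: if $v\in O$ and $w$ is a neighbour of $v$, pick $\psi\in\Aut(\Gamma)$ with $\psi(\h)=v$; then $\psi^{-1}(w)$ is a neighbour of the root $\h$, so $\psi^{-1}(w)=\h x$ for some $x\in X^\pm$ (every edge leaving $\h$ ends at some $\h x$ with $x\in X^\pm$), whence $w=\psi(\h x)=(\psi\circ\phi_x)(\h)\in O$. Since $\Gamma$ is connected and $\h\in O$, an induction on the distance to $\h$ gives $O=V(\Gamma)$, i.e. $\Gamma$ is vertex-transitive; Corollary \ref{CorTransitivity} then concludes.

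The only step that needs a little care is the identification of $\schrei(\G,x^{-1}\h x,X^\pm)$ with $\Gamma$ re-rooted at $\h x$, but this is exactly the observation already used in the proof of Corollary \ref{CorTransitivity}, so it costs nothing here, and the remainder is routine. Equivalently, one may avoid orbits and argue directly by induction along a reduced word $g=x_1\cdots x_n$ representing a prescribed vertex $\h g$: given an automorphism sending $\h$ onto $\h x_1\cdots x_k$, the adjacency of $\h x_1\cdots x_k$ and $\h x_1\cdots x_{k+1}$ lets one post-compose with a suitable $\phi_y$ to reach $\h x_1\cdots x_{k+1}$, and after $n$ steps one obtains an automorphism sending $\h$ to $\h g$.
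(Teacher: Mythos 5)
Your proof is correct and follows essentially the same route as the paper: translate the existence of each $\alpha_x$ into an automorphism of $\Gamma$ moving the root to the corresponding neighbour (via Theorem \ref{KeyTheorem} and the re-rooting identification from Corollary \ref{CorTransitivity}), then use connectedness to propagate this to transitivity of $\Gamma$ and hence length-transitivity of $\h$. The paper states the connectedness step in one line where you spell out the orbit-is-closed-under-adjacency induction, but the argument is the same.
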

\begin{proof}
 The proofs of Theorem \ref{KeyTheorem} and Corollary \ref{CorTransitivity} show in fact that for every $g$ in $\G$, the existence of a length-preserving isomorphism $\alpha_g\colon \h\to g^{-1}\h g$ is equivalent to the existence of an automorphism of $\Gamma=\Schrei{G}{H}{X}$ that sends the vertex $\h$ to the vertex $\h g$.
 On the other hand, $\alpha_x$ exists for every $x\in X^\pm$ if and only if it is possible to send the vertex $\h$ to each of its neighbors by an automorphism of $\Gamma$.
Since $\Gamma$ is connected, this last condition is equivalent to the transitivity of $\Gamma$, and hence to the length-transitivity of $\h$.
\end{proof}
\begin{remark}
It is important to notice that, in order to ensure the length-transitivity of $\h$, we need to check the existence of $\alpha_x$ for all generators $x\in X$ and their inverses.
For example, the following graph is non-transitive, even if $\alpha_a$ and $\alpha_x$ exist.
Indeed, $\alpha_{x^{-1}}$ does not exist.
\begin{figure}[!h]
\centering
\begin{tikzpicture}[scale=0.7,>=stealth]
  \SetVertexNoLabel
  \SetGraphUnit{2}
  \tikzset{LabelStyle/.style ={fill=white}}
  \foreach \k in {0,1}{
  \Vertices[x=\k*3*\GraphUnit,y=0]{line}{A\k,B\k,C\k,D\k}
  \AddVertexColor{black}{B0}
  \Edge[label=$x$,style={->}](A\k)(B\k)
  \Edge[label=$x$,style={->,bend right}](B\k)(C\k)
  \Edge[label=$x$,style={->}](C\k)(D\k)
  \Edge[label=$a$,style={bend left}](B\k)(C\k)
  \Loop[dist=1.5cm,dir=NO,style={thick}, label=$a$](D\k)
  }
  \Loop[dist=1.5cm,dir=NO,style={thick}, label=$a$](A0)
  \node(0) at (-\GraphUnit*0.5,0){};
  \Edge[style={dotted}](0)(A0).
  \node(F) at (\GraphUnit*6.5,0){};
  \Edge[style={dotted}](F)(D1)
\end{tikzpicture}
\caption{A non-transitive Schreier graph over $\presentation{a,x}{a^2}$.
 The root is marked in black.
}
\label{GraphePresqueTransitif}
\end{figure}
\end{remark}

 Observe that if $\Gamma$ is $X$-transitive, then $\h$ is normal and therefore $\Gamma$ is a Cayley graph.
 However, the converse does not hold.
 More precisely, let $\Gamma\defegal\Schrei{G}{H}{X}$ be a Schreier graph that is isomorphic to a Cayley graph.
 Then it is in general not true that $\Gamma$ is $X$-transitive (and that $\h$ is normal).
 All we can say is the following, which characterizes Cayley graphs among Schreier graphs. 
\begin{theorem}
 Let $\Gamma=\Schrei{G}{H}{X}$ be any Schreier graph over a group $\G$.
 Then $\Gamma$ is (isomorphic to) a Cayley graph if and only if there exists a group $\G_1=\langle X_1\rangle$ that has the same degree as $\G$ and a normal subgroup $\group N\unlhd\G_1$ which is length-isomorphic to $\h$.
 \end{theorem}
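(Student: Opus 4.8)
The plan is to prove the two implications separately, with Theorem \ref{KeyTheorem} as the engine and the observation recalled in Section \ref{sectionSchreier} (that the Schreier graph of a normal subgroup is isomorphic to a Cayley graph) as the bridge to Cayley graphs. Throughout, $\G_1$ is a group of the form \eqref{group}, in accordance with the paper's convention for the letter $\G$.

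For the ``if'' direction, suppose $\G_1=\langle X_1\rangle$ has the same degree as $\G$ and $\group N\unlhd\G_1$ is length-isomorphic to $\h$. Since $\group N$ is normal, $\Schrei{G_1}{N}{X_1}\iso\Cayley{G_1/N}{X_1}$ is a Cayley graph. I would then apply Proposition \ref{PropConverse} (the substantive half of Theorem \ref{KeyTheorem}) to the pairs $(\G,\h)$ and $(\G_1,\group N)$: equal degree and length-isomorphic subgroups yield a root-respecting graph isomorphism from $\Gamma=\Schrei{G}{H}{X}$ to $\Schrei{G_1}{N}{X_1}$. Composing with the previous isomorphism exhibits $\Gamma$ as isomorphic to a Cayley graph.

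For the ``only if'' direction, assume $\Gamma$ is isomorphic, as an unlabeled, unrooted graph, to a Cayley graph $\Cayley{A}{Z}$. The construction recalled in Section \ref{sectionSchreier} provides a group $\G_1$ of the form \eqref{group} with generating system $X_1$ and a normal subgroup $\group N\unlhd\G_1$ such that $\G_1/\group N\iso\group A$ with $X_1$ mapping (as a multiset) onto $Z$ under $\pi\colon\G_1\to\G_1/\group N$, and with $\pi(x)^2=1$ exactly when $x^2=1$. The last clause forces $\G_1$ (with $X_1$) and $\group A$ (with $Z$) to have the same degree, because that degree is the number of generators of order $\le 2$ plus twice the number of those of order $\ge 3$ and the lift preserves which generators square to the identity; hence the degree of $\G_1$ equals the degree of $\group A$, equals the degree of $\Cayley{A}{Z}$, equals the degree of $\Gamma$, equals the degree of $\G$. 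The same clause also gives $\Schrei{G_1}{N}{X_1}\iso\Cayley{G_1/N}{X_1}\iso\Cayley{A}{Z}\iso\Gamma$, so there is a graph isomorphism $\psi\colon\Schrei{G_1}{N}{X_1}\to\Gamma$. This $\psi$ need not respect roots; but $\Schrei{G_1}{N}{X_1}$, being a Cayley graph, is vertex-transitive, so after composing $\psi$ with a graph automorphism of $\Schrei{G_1}{N}{X_1}$ carrying the root $\group N$ to $\psi^{-1}(\h)$ I may assume $\psi(\group N)=\h$. Now Proposition \ref{PropTransitive} (the easy half of Theorem \ref{KeyTheorem}), applied to this root-respecting isomorphism, yields that $\group N$ and $\h$ are length-isomorphic (and re-confirms the equality of degrees), which is exactly the conclusion sought.

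I expect the only delicate point to be the bookkeeping in the ``only if'' direction: checking that the lifted group $\G_1$ inherits the correct degree (which is precisely what the ``$\pi(x)^2=1$ iff $x^2=1$'' condition is for) and repairing the roots before invoking Theorem \ref{KeyTheorem} (which is where vertex-transitivity of Cayley graphs is used). Neither step requires a new idea beyond Theorem \ref{KeyTheorem} and the lifting construction, so the argument should be short.
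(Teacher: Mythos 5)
Your proof is correct and follows essentially the same route as the paper's (much terser) argument: both directions reduce to Theorem \ref{KeyTheorem} together with the fact that the Schreier graph of a normal subgroup is a Cayley graph. The only cosmetic difference is in the ``only if'' direction, where you repair the root via vertex-transitivity of the Cayley graph before invoking Proposition \ref{PropTransitive}, whereas the paper instead concludes that $\h$ is length-isomorphic to a conjugate of $\group N$ and lets normality collapse that conjugate to $\group N$ itself --- the two manoeuvres are equivalent.
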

\begin{proof}
 If there exists such $\G_1$ and $\group N$, then the graph $\Gamma$ is isomorphic to the graph $\Schrei{G_1}{N}{X_1}$ which is a Cayley graph.
 
 On the other hand, if $\Gamma$ is isomorphic to a Cayley graph $\Gamma_1$, then $\Gamma_1$ is a Schreier graph $\Schrei{G_1}{N}{X_1}$ over some group $\G_1$ which has the same degree as $\G$, and for some normal subgroup $\group N$.
 Moreover, the isomorphism between $\Gamma$ and $\Gamma_1$ implies that $\h$ is length-isomorphic to a conjugate of $\group N$.
\end{proof}

\section{Coverings}\label{SectionCoverings}
In this section, we give a criterion for the existence of coverings and of $X$-coverings of Schreier graphs.
We also give some relations between $X$-coverings and quasi-isometries.

\begin{definition}
Let $\Gamma=(E,V)$ be a graph and $v$ any vertex of $\Gamma$.
The \defi{star} of $v$ is the set $\enstq{e\in E}{\iota(e)=v}$.
\end{definition}
Observe that any morphism $\vphi\colon\Gamma_1\to \Gamma_2$ induces, for any vertex $v$ of $\Gamma_1$, a~map:
\[
\vphi_v\colon\Star_v\to\Star_{\vphi(v)}.
\]
\begin{definition}
A morphism $\vphi\colon\Gamma_1\to\Gamma_2$ is a \defi{covering} if all induced maps $\vphi_v$ are bijections.
For a vertex $v$ of $\Gamma_2$, the \defi{fiber} over $v$ is the set of all preimages of $v$ by $\vphi$.

If $\Gamma_1$ is a labeled graph, $\vphi$ is \defi{consistent with the labeling} if for any two edges $e$ and $f$, the fact that $\vphi(e)=\vphi(f)$ implies that $e$ and $f$ have same label.

An \defi{$X$-covering} is an $X$-morphism between two labeled graphs which is also a covering.
\end{definition}
It follows immediately from the definition that a covering is onto as soon as $\Gamma_2$ is connected.

Every $X$-covering is consistent with the labeling.
Moreover, every covering $\vphi\colon\Gamma_1\to\Gamma_2$ consistent with labeling induces a labeling on $\Gamma_2$ such that $\vphi$ is an $X$-morphism for this labeling.
On the other hand, if $\Gamma_2$ is labeled by $X$, then every covering $\vphi\colon\Gamma_1\to\Gamma_2$ induces a labeling on $\Gamma_1$ such that $\vphi$ is an $X$-covering.

\begin{lemma}\label{StrongCovering}
Let $\Gamma_i\defegal\Schrei{A}{H_i}{X}$ for $i=1,2$ be two Schreier graphs over the same group.
Then there exists an $X$-covering from $\Gamma_1$ to $\Gamma_2$ if and only if $\h_1$ is a subgroup of a conjugate of $\h_2$.
\end{lemma}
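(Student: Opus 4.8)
The plan is to reduce the statement to a purely algebraic one and then prove it by tracking labels along paths. The first observation is that the ``covering'' part of the hypothesis is essentially automatic here: in any Schreier graph $\Schrei{A}{H}{X}$ the label function maps the star of a vertex $\h f$ bijectively onto $X^\pm$, since for each $x\in X^\pm$ there is exactly one edge at $\h f$ with label $x$ (the one ending at $\h f x$). Hence if $\vphi\colon\Gamma_1\to\Gamma_2$ is any $X$\tiret morphism and $v$ is a vertex of $\Gamma_1$, the induced map $\vphi_v\colon\Star_v\to\Star_{\vphi(v)}$ sits in a commuting triangle with the two (bijective) label functions, so $\vphi_v$ is a bijection. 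Thus \emph{every} $X$\tiret morphism between Schreier graphs over the same group is automatically an $X$\tiret covering, and it remains only to show that an $X$\tiret morphism $\Gamma_1\to\Gamma_2$ exists if and only if $\h_1$ lies in a conjugate of $\h_2$.

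For the ``if'' direction, suppose $\h_1\leq g^{-1}\h_2 g$ for some $g\in\group A$, and define $\vphi$ on vertices by $\vphi(\h_1 f)\defegal\h_2 g f$. The one thing to check is well\tiret definedness, and this is exactly where the inclusion is used: if $\h_1 f=\h_1 f'$ then $f'f^{-1}\in\h_1\subseteq g^{-1}\h_2 g$, hence $g f' f^{-1} g^{-1}\in\h_2$, i.e.\ $\h_2 g f=\h_2 g f'$. One then extends $\vphi$ to edges in the only label\tiret preserving way (the edge at $\h_1 f$ with label $x$ is sent to the edge at $\h_2 g f$ with label $x$), and a routine check against the definitions shows that $\vphi$ commutes with $\iota$ and with $\bar{\phantom e}$ and preserves labels. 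So $\vphi$ is an $X$\tiret morphism and, by the preliminary observation, an $X$\tiret covering.

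For the ``only if'' direction, let $\vphi\colon\Gamma_1\to\Gamma_2$ be an $X$\tiret covering and write $\vphi(\h_1)=\h_2 g$ for some $g\in\group A$ (every vertex of $\Gamma_2$ has this form). Given $h\in\h_1$, pick a word $w=x_1\cdots x_n$ over $X^\pm$ representing $h$ in $\group A$. Reading $w$ off from the root of $\Gamma_1$ traces a path that ends at the vertex $\h_1 h=\h_1$, hence is a closed path at $\h_1$. Applying $\vphi$ yields a path in $\Gamma_2$ that starts at $\vphi(\h_1)=\h_2 g$, has the same label sequence $x_1,\dots,x_n$ (because $\vphi$ preserves labels), and is closed (being the $\vphi$\tiret image of a closed path). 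But reading $w$ off coset by coset from $\h_2 g$ lands at $\h_2 g h$, so $\h_2 g h=\h_2 g$, i.e.\ $g h g^{-1}\in\h_2$, i.e.\ $h\in g^{-1}\h_2 g$. Since $h\in\h_1$ was arbitrary, $\h_1\leq g^{-1}\h_2 g$, which is a conjugate of $\h_2$.

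The argument is short, and the only places asking for care are the well\tiret definedness check in the ``if'' direction and, in both directions, the deterministic behaviour of Schreier graphs: a path is determined, as far as its endpoints are concerned, by its starting vertex and its sequence of labels, even in the presence of repeated generators. I would state this determinism property explicitly before invoking it; everything else is a matter of unwinding the definitions, so I do not expect a genuine obstacle.
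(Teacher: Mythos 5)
Your proof is correct and follows essentially the same route as the paper's: the ``only if'' direction by pushing a closed path at the root through the covering and reading off $ghg^{-1}\in\h_2$, and the ``if'' direction by the explicit coset map $\h_1 f\mapsto \h_2 gf$ extended label-by-label to edges, with the same well-definedness check. The only cosmetic differences are that you work directly over $\group A$ with arbitrary (not necessarily reduced) words, whereas the paper first reduces to the free product $\G$ in order to invoke Lemma~\ref{lemmaPathWord} on reduced paths, and that you make explicit the (correct, and implicitly used) observation that an $X$-morphism between Schreier graphs over the same $(\group A,X)$ is automatically a covering.
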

\begin{proof}
We have $\group A=\G/\group N$.
Since the correspondence between subgroups of $\group A$ and subgroups of $\G$ containing $\group N$ preserves inclusions and conjugations and induces an isomorphism $\Schrei{G}{M}{X}\iso \Schrei{A}{M/\group N}{X}$, it is sufficient to prove the result for $\G$.

Let $\vphi\colon\Gamma_1\to\Gamma_2$ be an $X$-covering and let $v_0\defegal\vphi(\h_1)$ be the image of the base-vertex of $\Gamma_1$.
Now, the group $\h_1$ is isomorphic (see Lemma \ref{lemmaPathWord}) to the group of closed paths based at the vertex $\h_1$.
This group is itself isomorphic to its image under $\vphi$, which is a subgroup of the group of closed paths based at the vertex $v_0$.
This last group is isomorphic to $g\h g^{-1}$, where $g$ is the label of the path between $v_0$ and $\h_2$.

For the converse, let $\h_1\leq \h=g\h_2g^{-1}$, and $\Gamma\defegal\Schrei{G}{H}{X}$.
It is obvious that $\Gamma$ and $\Gamma_2$ are $X$-isomorphic; indeed, we only change the root.
Hence, to conclude the proof, we only need to show that there exists an $X$-covering from $\Gamma_1$ to $\Gamma$.
Define $\vphi\colon\Gamma_1\to\Gamma$ on the vertices by $\vphi(\h_1g)\defegal\h g$.
We need to check that $\vphi$ is well-defined.
But $\h_1g=\h_1f$ if and only if $gf^{-1}\in \h_1\leq\h$, which implies that $\h g=\h f$.
Now, define $\vphi$ on edges by sending the unique edge leaving $\h_1 g$ and labeled by $x$ to the unique edge leaving $\h g$ and labeled by $x$.
With this definition, all the $\vphi_v$ are bijections and $\vphi$ preserves the labeling.
All that remains to check is that $\vphi$ is a morphism of graphs.
It is immediate from the definition that $\vphi$ preserves initial vertices.
Now, let $e$ be an edge in $\Gamma_1$ with initial vertex $\h_1g$ and label $x$.
The inverse edge $\bar e$ has initial vertex $\h_1gx$ and label $x^{-1}$.
Therefore, $\vphi(e)$ has initial vertex $\h g$ and label $x$, and its inverse has initial vertex $\h gx$ and label $x^{-1}$.
That is $\overline{\vphi(e)}=\vphi(\bar e)$.
\end{proof}
At this point, an obvious but important remark is the fact that if one of the $\h_i$ is normal, the existence of an $X$-covering is equivalent to the fact that $\h_1\leq \h_2$.
This is also true if we ask that the covering preserves roots too.
As an immediate corollary we have:
\begin{proposition}
Let $\group A$ be a group with generating system $X$. Then for any $X^\pm$-labeled graph $\Gamma$, there is an $X$-covering from $\Cayley{A}{X}$ to $\Gamma$ if and only if $\Gamma$ is a Schreier graph over $\group A$.
\end{proposition}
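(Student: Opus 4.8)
The plan is to deduce this from Lemma~\ref{StrongCovering}, using the fact that $\Cayley{A}{X}$ is itself a Schreier graph over $\group A$ --- the Schreier graph of the trivial subgroup $\{1\}$, which is normal --- or, equivalently, that $\Cayley{A}{X}\iso\Schrei{G}{N}{X}$ (as labeled rooted graphs), where $\group A\iso\G/\group N$ with $\G$ as in~\eqref{group} and $\group N$ normal.

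For the ``if'' direction, suppose $\Gamma$ is a Schreier graph over $\group A$, say $\Gamma$ is $X$-isomorphic to $\Schrei{A}{H}{X}$. Since the trivial subgroup is contained in $\h$, hence in every conjugate of $\h$, Lemma~\ref{StrongCovering} provides an $X$-covering $\Cayley{A}{X}\to\Schrei{A}{H}{X}\iso\Gamma$. (Equivalently, one writes this covering down by hand via $g\mapsto\h g$ on vertices and the obvious rule on edges, exactly as in the converse part of the proof of Lemma~\ref{StrongCovering}.)

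For the ``only if'' direction, let $\vphi\colon\Cayley{A}{X}\to\Gamma$ be an $X$-covering. First I would check that $\Gamma$ is a Schreier graph over $\G$. As $\Gamma$ is connected, $\vphi$ is onto; and since each star map $\vphi_{\tilde v}$ is a bijection respecting labels, every vertex of $\Gamma$ has exactly one outgoing edge for each label in $X^\pm$, compatibly with the edge involution. Hence $\G$ acts on the right on the vertex set of $\Gamma$ by reading words --- the relations $y_j^2=1$ being automatic, since the inverse of the $y_j$-edge at a vertex $v$ is the unique $y_j$-edge at its endpoint and ends back at $v$ --- and this action is transitive by connectedness, so $\Gamma\iso\Schrei{G}{H'}{X}$ with $\h'=\stab_\G(v_0)$ for a chosen base vertex $v_0$. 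Now take $v_0\defegal\vphi(1)$: for each $n\in\group N$ the path reading $n$ from $1$ in $\Cayley{A}{X}$ is closed (its endpoint is the vertex $1$), so its $\vphi$\tiret image, the path reading $n$ from $v_0$ in $\Gamma$, is closed as well; thus $\group N\leq\stab_\G(v_0)=\h'$. Consequently $\Schrei{G}{H'}{X}\iso\Schrei{A}{H'/N}{X}$, so $\Gamma$ is a Schreier graph over $\group A$, as wanted. (Alternatively, once $\Gamma\iso\Schrei{G}{H'}{X}$ is established one may reinterpret $\vphi$ as an $X$-covering $\Schrei{G}{N}{X}\to\Schrei{G}{H'}{X}$, apply Lemma~\ref{StrongCovering} to obtain $\group N\leq g\h'g^{-1}$ for some $g\in\G$, and invoke the normality of $\group N$ to get $\group N=g^{-1}\group N g\leq\h'$.)

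I expect the only step needing genuine (if modest) care to be the structural one in the last paragraph: verifying that being $X$-covered by a Cayley graph forces $\Gamma$ to be a complete, deterministic, involution-compatible $X$-labeled graph, hence a Schreier graph over $\G$ in the first place. This is where surjectivity of $\vphi$ and bijectivity of the star maps enter; once it is in hand, the rest is a direct application of Lemma~\ref{StrongCovering} together with the remark that an $X$-covering onto a Schreier graph with normal defining subgroup is the same data as an inclusion of subgroups.
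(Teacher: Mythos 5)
Your proposal is correct and follows essentially the same route as the paper: identify $\Cayley{A}{X}$ with $\Schrei{G}{N}{X}$ for $\group N$ normal, and observe that an $X$-covering exists exactly when $\Gamma$ is (up to base point) the Schreier graph of a subgroup of $\G$ containing $\group N$, which by the correspondence theorem is the same as a Schreier graph over $\group A$. The only difference is that you spell out the structural step (a graph $X$-covered by a Cayley graph is complete, deterministic and involution-compatible, hence a Schreier graph over $\G$) which the paper leaves implicit.
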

\begin{proof}
We have $\group A=\G/\group N$ with $\group N$ normal and $\Cayley{A}{X}\iso\Schrei{G}{N}{X}$.
There is an $X$-covering if and only if, up to a choice of base point, $\Gamma$ is a Schreier graph of $\G$ for some subgroups $\h$ containing $\group N$. Therefore, $\Gamma$ is a Schreier graph of $\h/\group N$ in $\group A=\G/\group N$.
\end{proof}

\begin{proposition}\label{propCovering}
Let $\Gamma_i\defegal\Schrei{G_i}{H_i}{X_i}$ for $i=1,2$ be two Schreier graphs.
Then there exists a covering from $\Gamma_1$ to $\Gamma_2$ if and only if $\G_1$ and $\G_2$ are of the same degree and $\h_1$ is length-isomorphic to a subgroup of a conjugate of $\h_2$.
\end{proposition}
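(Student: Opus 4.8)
The plan is to bootstrap from three ingredients already in place: Theorem~\ref{KeyTheorem} in the form of Propositions~\ref{PropTransitive} and~\ref{PropConverse} (to pass between arbitrary graph isomorphisms and length-isomorphisms of subgroups), Lemma~\ref{StrongCovering} (which handles $X$-coverings between Schreier graphs over one fixed group), and the remark in Section~\ref{SectionCoverings} that a covering onto a labelled graph always lifts the labelling, thereby becoming an $X$-covering.

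For the ``if'' direction, assume $\G_1$ and $\G_2$ have the same degree and fix a length-preserving isomorphism $\alpha$ from $\h_1$ onto a subgroup $\group K$ of some conjugate $g^{-1}\h_2 g$. I would introduce the auxiliary graph $\Delta\defegal\Schrei{G_2}{K}{X_2}$. Since $\G_1$ and $\G_2$ have equal degree and $\h_1$ is length-isomorphic to $\group K$, Proposition~\ref{PropConverse} yields a root-respecting graph isomorphism $\beta\colon\Gamma_1\to\Delta$. Since $\group K\leq g^{-1}\h_2 g$, i.e. $\group K$ is contained in a conjugate of $\h_2$, Lemma~\ref{StrongCovering} yields an $X_2$-covering $\psi\colon\Delta\to\Gamma_2$. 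A graph isomorphism is a covering and a composite of coverings is a covering, so $\psi\circ\beta\colon\Gamma_1\to\Gamma_2$ is the covering we want.

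For the ``only if'' direction, let $\vphi\colon\Gamma_1\to\Gamma_2$ be a covering. It is onto since $\Gamma_2$ is connected, and each $\vphi_v\colon\Star_v\to\Star_{\vphi(v)}$ being a bijection forces the degree of $\Gamma_1$ to equal that of $\Gamma_2$; hence $\G_1$ and $\G_2$ have the same degree. The map $\vphi$ need not respect labellings, but $\Gamma_2$ is labelled by $X_2$, so we may pull this labelling back along $\vphi$ to endow the underlying graph of $\Gamma_1$ with an $X_2$-labelling, producing a graph $\Gamma_1'$ for which $\vphi\colon\Gamma_1'\to\Gamma_2$ is an $X_2$-covering. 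Since $\vphi_v$ is a bijection onto $\Star_{\vphi(v)}$, which contains exactly one edge with each label of $X_2^\pm$, the star of every vertex of $\Gamma_1'$ has the same property; rooting $\Gamma_1'$ at the root of $\Gamma_1$, this realizes $\Gamma_1'$ as a Schreier graph $\Schrei{G_2}{H_1'}{X_2}$ for some subgroup $\h_1'\leq\G_2$.

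To finish, apply Lemma~\ref{StrongCovering} to the $X_2$-covering $\vphi\colon\Schrei{G_2}{H_1'}{X_2}\to\Schrei{G_2}{H_2}{X_2}$: it shows $\h_1'$ is a subgroup of a conjugate of $\h_2$. Moreover $\Gamma_1$ and $\Gamma_1'$ share the same underlying rooted graph, so the rooted Schreier graphs $\Schrei{G_1}{H_1}{X_1}$ and $\Schrei{G_2}{H_1'}{X_2}$ are isomorphic, and Proposition~\ref{PropTransitive} gives that $\h_1$ and $\h_1'$ are length-isomorphic. Chaining these, $\h_1$ is length-isomorphic to $\h_1'$, which sits inside a conjugate of $\h_2$ --- exactly the asserted condition. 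The one point demanding care --- and the step I would write out most carefully --- is the bookkeeping of which ambient group and generating system each of $\h_1$, $\h_1'$, $\group K$, $\h_2$ lives in when invoking ``length-isomorphic to a subgroup of a conjugate''; the rest is assembled directly from the cited statements.
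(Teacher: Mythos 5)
Your proof is correct and follows essentially the same route as the paper: both directions proceed by relabelling $\Gamma_1$ with the $X_2$-labelling (pulled back along the covering, or transported via the isomorphism from Proposition~\ref{PropConverse}) and then combining Lemma~\ref{StrongCovering} for the labelled covering with Theorem~\ref{KeyTheorem} for the unlabelled isomorphism. Your write-up is in fact somewhat more explicit than the paper's about why the relabelled graph is again a Schreier graph over $\G_2$, but the argument is the same.
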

\begin{proof}
Suppose that there exists a covering.
Then both graphs (and therefore groups) have the same degree.
Moreover, we can pullback by $\vphi$ the labeling of $\Gamma_2$ onto $\Gamma_1$.
Let us denote by $\Gamma=\Schrei{G_2}{H}{X_2}$ the graph obtained in this way.
Apart from the labeling, it is the graph $\Gamma_1$.
Therefore, $\h$ is length-transitive to $\h_1$.
Moreover, due to this new labeling, $\vphi\colon\Gamma\to\Gamma_2$ preserves the labels.
Hence we can use the last lemma to prove that $\h$ is a subgroup of $\h_2$.

The converse is quite obvious.
Let $\h$ the subgroup of $\G_2$ which is length-transitive to $\h_1$.
By Lemma \ref{StrongCovering}, there exists an $X$-covering from $\Gamma$ to $\Gamma_2$.
The graph $\Gamma$ being isomorphic to $\Gamma_1$ (only the labeling changes), we have the desired covering.
\end{proof}

The following lemma is an easy adaptation of a well-known fact about coverings of topological spaces.
\begin{lemma}
For a covering $\vphi\colon\Gamma_1\to\Gamma_2$ and two vertices $w$ and $u$ in the same connected component of $\Gamma_2$, fibers over $w$ and over $u$ have same cardinality.
\end{lemma}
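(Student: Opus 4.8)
The plan is to reduce, using connectedness of the component, to the case where $w$ and $u$ are the two endpoints of a single edge, and then to exhibit an explicit bijection between the two fibers using the fact that a covering induces bijections on stars.

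First I would handle the adjacent case. Suppose there is an edge $e$ of $\Gamma_2$ with $\iota(e)=w$ and $\tau(e)=u$. For each vertex $\tilde w$ in the fiber over $w$, the induced map $\vphi_{\tilde w}\colon\Star_{\tilde w}\to\Star_w$ is a bijection, so there is a unique edge $\tilde e\in\Star_{\tilde w}$ with $\vphi(\tilde e)=e$; set $\psi(\tilde w)\defegal\tau(\tilde e)$. Since $\vphi$ is a morphism of graphs, $\vphi\bigl(\tau(\tilde e)\bigr)=\vphi\bigl(\iota(\overline{\tilde e})\bigr)=\iota\bigl(\overline{\vphi(\tilde e)}\bigr)=\iota(\bar e)=u$, so $\psi$ maps the fiber over $w$ into the fiber over $u$. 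Running the same construction with $\bar e$ in place of $e$ produces a map $\psi'$ from the fiber over $u$ to the fiber over $w$.

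Next I would check that $\psi$ and $\psi'$ are mutually inverse. Given $\tilde w$ over $w$, with associated edge $\tilde e$ and $\tilde u\defegal\psi(\tilde w)=\tau(\tilde e)$, the edge $\overline{\tilde e}$ lies in $\Star_{\tilde u}$ and satisfies $\vphi(\overline{\tilde e})=\overline{\vphi(\tilde e)}=\bar e$; by the uniqueness part of the bijectivity of $\vphi_{\tilde u}$, this is precisely the edge chosen in the definition of $\psi'(\tilde u)$, whence $\psi'(\tilde u)=\tau(\overline{\tilde e})=\iota(\tilde e)=\tilde w$. The symmetric computation shows that $\psi\circ\psi'$ is the identity on the fiber over $u$. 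Thus the fibers over two adjacent vertices are in bijection, hence of equal cardinality.

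For the general case I would argue by connectedness. Call two vertices of $\Gamma_2$ \emph{equivalent} when there is a bijection between the fibers over them; this is an equivalence relation, and the previous paragraphs show that any two adjacent vertices are equivalent. Since $w$ and $u$ lie in the same connected component, they are joined by a path, and applying the adjacent case along the successive edges of that path shows that $w$ and $u$ are equivalent, which is the claim. I do not anticipate a genuine obstacle here; the only points needing a little care are that ``same cardinality'' must be read as ``there is a bijection'', so that infinite fibers are covered, and that the construction is kept unambiguous by fixing a single edge $e$ rather than working with all edges between $w$ and $u$ at once, which matters in the presence of multiple edges or loops.
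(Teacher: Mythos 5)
Your proof is correct: the edge-lifting argument giving mutually inverse maps $\psi$ and $\psi'$ between fibers over adjacent vertices, followed by induction along a path, is exactly the standard adaptation of the topological fact that the paper invokes --- indeed the paper gives no proof at all, remarking only that the lemma is ``an easy adaptation of a well-known fact about coverings of topological spaces.'' Your write-up supplies precisely the intended argument, and your closing cautions (working with bijections rather than cardinal arithmetic, and fixing a single edge $e$ to avoid ambiguity with multiple edges) are sensible in the paper's setting, which explicitly allows loops and multiple edges.
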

\begin{definition}
If $\vphi\colon\Gamma_1\to\Gamma_2$ is a covering with $\Gamma_2$ connected, the cardinality of the fibers is called the \defi{degree} of the covering.
\end{definition}
\begin{lemma}\label{lemmaDegree}
Let $\h_1\leq\h_2$ be two subgroups of a group $\G$.
Then the index of $\h_1$ in $\h_2$ is equal to the degree of the $X$-covering $\Schrei{G}{\h_1}{X}\to\Schrei{G}{\h_2}{X}$.
\end{lemma}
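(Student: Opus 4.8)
The plan is to identify the fibers of the covering map $\vphi\colon\Schrei{G}{H_1}{X}\to\Schrei{G}{H_2}{X}$ explicitly with the cosets of $\h_1$ in $\h_2$, and then invoke the definition of the degree of a covering. First I would recall that, by Lemma \ref{StrongCovering} and its proof, since $\h_1\leq\h_2$ there is a canonical $X$-covering $\vphi$ sending the vertex $\h_1 g$ of $\Schrei{G}{\h_1}{X}$ to the vertex $\h_2 g$ of $\Schrei{G}{\h_2}{X}$; this is well defined because $\h_1 g=\h_1 f$ implies $\h_2 g=\h_2 f$. Both graphs are connected, so by the preceding lemma all fibers have the same cardinality, and hence it suffices to compute the cardinality of a single fiber, say the fiber over the root $\h_2$ of $\Schrei{G}{\h_2}{X}$.

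Next I would determine this fiber. A vertex $\h_1 g$ of $\Schrei{G}{\h_1}{X}$ lies in the fiber over $\h_2$ if and only if $\h_2 g=\h_2$, i.e. if and only if $g\in\h_2$. Thus the fiber over $\h_2$ is exactly $\enstq{\h_1 g}{g\in\h_2}$, which is the set of right cosets of $\h_1$ contained in $\h_2$. Two elements $g,g'\in\h_2$ give the same vertex precisely when $\h_1 g=\h_1 g'$, so this set is in natural bijection with the coset space $\h_1\backslash\h_2$, which has cardinality $[\h_2:\h_1]$. Therefore the degree of the covering, being the common cardinality of all fibers, equals $[\h_2:\h_1]$.

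I do not expect any serious obstacle here: the only mildly delicate point is to make sure the covering $\vphi$ used in the statement is the one produced in Lemma \ref{StrongCovering} (with $g=1$, so no conjugation and no change of root is needed), and that its fibers are computed in the graph $\Schrei{G}{\h_1}{X}$ rather than in $\Schrei{G}{\h_2}{X}$; once the fiber over the root is written out as above, the identification with $\h_1\backslash\h_2$ is immediate. One should also note that the argument is valid whether or not the index is finite, since the definition of the degree of a covering given above does not require finiteness.
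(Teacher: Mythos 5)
Your proposal is correct and follows essentially the same route as the paper: compute the fiber over the vertex $\h_2$ as $\enstq{\h_1 g}{\h_2 g=\h_2}=\enstq{\h_1 g}{g\in\h_2}$ and identify it with the decomposition of $\h_2$ into right $\h_1$\tiret cosets. The paper's proof is just a terser version of the same argument, leaving the appeal to the constant fiber cardinality implicit.
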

\begin{proof}
Let us look at the fiber $F$ over the vertex $\h_2$.
It is exactly the set $\enstq{\h_1g}{\h_2g=\h_2}=\enstq{\h_1g}{g\in \h_2}$.
This corresponds to the decomposition of $\h_2$ into right $\h_1$\tiret coset.
\end{proof}

This lemma will allow us to make a link between $X$-covering and quasi-isometries.
\begin{definition}
Let $(M_1,d_1)$ and $(M_2,d_2)$ be two metric spaces.
A \defi{quasi-isometry} from $(M_1,d_1)$ to $(M_2,d_2)$ is a function $f\colon M_1\to M_2$ such that there exists constants $A\leq 1$, $B\leq 0$ and $C\leq 0$ for which
\begin{enumerate}
\item For every two points $x$ and $y$ in $M_1$ we have
\[A^{-1}d_1(x,y)-B\leq d_2(f(x),f(y))\leq Ad_1(x,y)+B.\]
\item For every point $z$ in $M_2$, $d_2(z,f(M_1))\leq C$.
\end{enumerate}

The spaces $M_1$ and $M_2$ are called \defi{quasi-isometric} if there exists a quasi-isometry from $M_1$ to $M_2$.
\end{definition}
The first point means that even if the function $f$ does not necessarily preserve distances, it does not change them too much.
The second point says that $f$ is close to being surjective: every point in $M_2$ is at a bounded distance from the image.
This notion naturally arises in the study of Cayley graphs, since two different finite generating systems for the same group give quasi-isometric Cayley graphs.
Note that every two finite graphs are quasi-isometric.

For a covering $\vphi\colon\Gamma_1\to\Gamma_2$ and a vertex $v\in\Gamma_2$, the \emph{diameter of the fiber of $v$}, $\diam(\vphi^{-1}(v))$, is the maximal distance in $\Gamma_1$ between two preimages of $v$.
For coverings of finite degree, the quasi-isometry of the two graphs follows from one simple condition.
\begin{lemma}\label{lemmaQIso}
Let $\vphi\colon\Gamma_1\to\Gamma_2$ be a covering of finite degree ($\Gamma_2$ is connected).
Suppose that there exists a constant $B$ such that for every vertex $v$ in $\Gamma_2$, the diameter $\diam(\vphi^{-1}(v))$ is at most $B$. Then $\vphi$ is a quasi-isometry with $A=1$ and $C=0$.
\end{lemma}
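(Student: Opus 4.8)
The plan is to show that $\vphi$ is a quasi-isometry with $A=1$ and $C=0$ directly from the definition, by bounding distances in $\Gamma_1$ in terms of distances in $\Gamma_2$. The condition $C=0$ is immediate: a covering onto a connected graph is surjective (noted right after the definition of covering), so every vertex of $\Gamma_2$ lies in the image and $d_2(z,\vphi(\Gamma_1))=0$. For the distance estimate, I would first establish the easy inequality $d_2(\vphi(x),\vphi(y))\le d_1(x,y)$: any edge-path from $x$ to $y$ in $\Gamma_1$ of length $d_1(x,y)$ maps under the morphism $\vphi$ to an edge-path of the same length from $\vphi(x)$ to $\vphi(y)$, so the distance cannot increase. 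This gives the upper bound with $A=1$, $B=0$.

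The substance is the lower bound, which is where the path-lifting property of coverings enters — this is the analogue of the homotopy lifting lemma for topological covering spaces, and it is the step I expect to be the main (though routine) obstacle. The key fact is: given a covering $\vphi\colon\Gamma_1\to\Gamma_2$, any edge-path in $\Gamma_2$ starting at a vertex $w=\vphi(v)$ lifts uniquely to an edge-path in $\Gamma_1$ starting at $v$; this follows by inducting on the length of the path, using at each step that $\vphi_v\colon\Star_v\to\Star_{\vphi(v)}$ is a bijection (so there is exactly one edge out of the current lifted vertex over the next edge of the path downstairs). With path-lifting in hand, take $x,y\in\Gamma_1$ and set $n\defegal d_2(\vphi(x),\vphi(y))$. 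Choose a geodesic path $q$ of length $n$ in $\Gamma_2$ from $\vphi(x)$ to $\vphi(y)$ and lift it starting at $x$ to a path $\tilde q$ of length $n$ in $\Gamma_1$; its endpoint $y'$ satisfies $\vphi(y')=\vphi(y)$, so $y'$ lies in the fiber $\vphi^{-1}(\vphi(y))$, which also contains $y$.

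Now invoke the hypothesis: $\diam(\vphi^{-1}(\vphi(y)))\le B$, hence $d_1(y,y')\le B$. The triangle inequality in $\Gamma_1$ then gives
\[
d_1(x,y)\le d_1(x,y')+d_1(y',y)\le n+B=d_2(\vphi(x),\vphi(y))+B,
\]
that is, $d_1(x,y)-B\le d_2(\vphi(x),\vphi(y))$. Combining with the upper bound, $d_1(x,y)-B\le d_2(\vphi(x),\vphi(y))\le d_1(x,y)$, which is exactly condition (1) of the definition of quasi-isometry with constants $A=1$ and $B$, together with $C=0$ from the surjectivity remark; so $\vphi$ is a quasi-isometry of the stated form. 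The only point to be slightly careful about is that $B$ must be taken to be at least $1$ (or one replaces it by $\max\{B,1\}$) so that the constant meets the formal requirement $B\ge0$ in the definition — this does not affect either inequality since enlarging $B$ only weakens them.
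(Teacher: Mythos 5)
Your proof is correct and follows essentially the same route as the paper: surjectivity gives $C=0$, mapping paths forward gives $d_2(\vphi(x),\vphi(y))\le d_1(x,y)$, and lifting a downstairs geodesic to a path ending in the fiber of $y$ plus the diameter bound $B$ gives the reverse inequality up to $B$. Your write-up is if anything slightly more careful (explicit path-lifting induction, the remark on the sign convention for $B$), so nothing further is needed.
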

\begin{proof}
Since $\vphi$ is surjective, we have $C=0$ and we only need to check the first condition in the definition of quasi-isometry.
Moreover, since $\vphi$ maps paths from $v$ to $w$ to paths from $\vphi(v)$ to $\vphi(w)$ we always have
\[d_2(\vphi(v),\vphi(w))\leq d_1(v,w).\]
For the other inequality, take a path $p$ in $\Gamma_2$ that realizes the distance between $\vphi(v)$ and $\vphi(w)$.
This path lifts to a path $\tilde p$ in $\Gamma_1$ from $v$ to $z$ with $z$ in the same fiber as $w$.
This give us the desired inequality:
\[
d_1(v,w)\leq d_1(v,z)+d_1(z,w)\leq d_2(\vphi(v),\vphi(z))+B=d_2(\vphi(v),\vphi(w))+B.
\]
\end{proof}

\begin{lemma}\label{lemmaQISchreier}
Let $\group L\leq\group H\leq\group A$ be two subgroups of $\group A$ such that $\group L$ has finite index in $\h$.
This induces an $X$-covering of finite degree $\vphi\colon\Schrei{A}{L}{X}\to\Schrei{A}{H}{X}$.
Then, for all $l\in \group A$, the supremum 
\[
\sup\enstq{\diam(\vphi^{-1}(\h f))}{f\in\group A,fl^{-1}\in N_{\group A}(\h)\cap N_{\group A}(\group L)}
\]
is finite.
\end{lemma}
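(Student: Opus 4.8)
The plan is to make the fibres of $\vphi$ completely explicit and then to observe that, under the hypothesis $fl^{-1}\in N_{\group A}(\h)\cap N_{\group A}(\group L)$, the number $\diam(\vphi^{-1}(\h f))$ does not depend on the choice of $f$ at all: it always equals $\diam(\vphi^{-1}(\h l))$. From this the statement follows at once: since $\group L$ has finite index in $\h$, Lemma \ref{lemmaDegree} gives that $\vphi$ has finite degree $[\h:\group L]$, so every fibre is a finite set of vertices; as $\Schrei{A}{L}{X}$ is connected, a finite set of its vertices has finite diameter. Hence the supremum in the statement, being equal to the single finite number $\diam(\vphi^{-1}(\h l))$, is finite. (The index set is nonempty, as $f=l$ is allowed.)

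First I would recall, as in the proof of Lemma \ref{lemmaDegree}, that $\vphi$ is given on vertices by $\vphi(\group L g)=\h g$, so that the fibre over a vertex $\h f$ is $\vphi^{-1}(\h f)=\enstq{\group L h f}{h\in\h}$. Next I would record the distance formula in a Schreier graph: in $\Schrei{A}{L}{X}$ one has $\group L aw=\group L b$ if and only if $w\in a^{-1}\group L b$, hence the distance between $\group L a$ and $\group L b$ is $\min\enstq{\abs{a^{-1}\ell b}_X}{\ell\in\group L}$. Applying this to two vertices of the fibre over $\h f$ gives
\[
d\bigl(\group L h_1 f,\group L h_2 f\bigr)=\min\enstq{\abs{f^{-1}h_1^{-1}\ell h_2 f}_X}{\ell\in\group L},
\]
and $\diam(\vphi^{-1}(\h f))$ is the maximum of this quantity over all $h_1,h_2\in\h$.

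The key step is to feed in the normalizer hypothesis. Write $n\defegal fl^{-1}$, so that $f=nl$ and $n$ normalizes both $\h$ and $\group L$. Inserting $nn^{-1}$ twice,
\[
f^{-1}h_1^{-1}\ell h_2 f=l^{-1}(n^{-1}h_1^{-1}n)(n^{-1}\ell n)(n^{-1}h_2 n)l.
\]
Conjugation by $n$ is a bijection of $\h$ onto itself and of $\group L$ onto itself; so as $\ell$ ranges over $\group L$ the element $n^{-1}\ell n$ ranges over $\group L$, and writing $h_i'\defegal n^{-1}h_i n$ the pair $(h_1',h_2')$ ranges over $\h\times\h$ as $(h_1,h_2)$ does. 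Re-indexing the inner $\min$ and the outer $\max$ accordingly, the quantity $\max_{h_1,h_2}\min_\ell\abs{f^{-1}h_1^{-1}\ell h_2 f}_X$ becomes $\max_{h_1',h_2'}\min_\ell\abs{l^{-1}(h_1')^{-1}\ell h_2' l}_X$, which is exactly $\diam(\vphi^{-1}(\h l))$. This is the announced independence of $f$, and the proof is then finished by the first paragraph.

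I do not anticipate a genuine obstacle, but the one point requiring care is that conjugation by $n$ is \emph{not} an isometry of the Schreier graphs in general — it does not preserve $X$\tiret lengths unless $\group A$ is $\Z$ or $\ZZ$ — so one cannot simply invoke a graph automorphism moving $\h l$ to $\h f$. What rescues the argument is that the word length appears only inside a minimum taken over an entire $\group L$\tiret coset, and it is precisely that minimum, together with the re-indexing $\ell\mapsto n^{-1}\ell n$ of $\group L$ (legitimate because $n$ normalizes $\group L$), that is conjugation\tiret invariant. Keeping the minimum and the re-indexing bundled together, rather than trying to track individual word lengths, is the whole trick.
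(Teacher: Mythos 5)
Your proof is correct, and it rests on the same key observation as the paper's: the hypothesis $fl^{-1}\in N_{\group A}(\h)\cap N_{\group A}(\group L)$ (equivalently $f^{-1}\h f=l^{-1}\h l$ and $f^{-1}\group Lf=l^{-1}\group Ll$) lets you transport the fiber over $\h l$ to the fiber over $\h f$. The execution differs in a way worth noting. The paper fixes coset representatives $\h=\group Lg_1\cup\dots\cup\group Lg_k$, sets $g'_i\defegal l^{-1}g_il$, shows that the fibers over $\h l$ and over $\h f$ are $\{\group Llg'_i\}_i$ and $\{\group Lfg'_i\}_i$ respectively, and bounds both diameters by the single constant $2\max_i\abs{g'_i}_X$ via the triangle inequality through the vertices $\group Ll$ and $\group Lf$. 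You instead express the diameter as $\max_{h_1,h_2\in\h}\min_{\ell\in\group L}\abs{f^{-1}h_1^{-1}\ell h_2f}_X$ and re-index by conjugation by $n=fl^{-1}$, obtaining the sharper conclusion that $\diam(\vphi^{-1}(\h f))=\diam(\vphi^{-1}(\h l))$ exactly for every admissible $f$, not merely a common upper bound; your closing caveat is exactly the right one, since conjugation by $n$ does not preserve $X$\tiret lengths and it is only the minimum over the entire coset, together with the bijections $\ell\mapsto n^{-1}\ell n$ of $\group L$ and $h\mapsto n^{-1}hn$ of $\h$, that is invariant. Both arguments then conclude by finiteness of the fiber via Lemma \ref{lemmaDegree}, so each buys the same lemma, with yours delivering a slightly stronger intermediate fact and the paper's delivering an explicit numerical bound.
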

\begin{proof}
Firstly, note that $fl^{-1}\in N_{\group A}(\h)\cap N_{\group A}(\group L)$ if and only if $f^{-1}\h f=l^{-1}\h l$ and $f^{-1}\group Lf=l^{-1}\group Ll$.

Let $k$ be the degree of the covering $\vphi$.
Therefore, we have $\h=\group Lg_1\cup\dots\cup \group Lg_k$ for some $g_i\in {\group H}$.
The fiber over the vertex $\group Hl$ is 
\begin{align*}
\enstq{\group Llg}{\h lg= \h l}&=\enstq{\group Llg}{g\in l^{-1}\h l}\\
&=\enstq{\group Lgl}{g\in \h}\\
&=\{\group Lg_1l,\dots, \group Lg_kl\}\\
&=\{\group Llg'_1,\dots, \group Llg'_k\},
\end{align*}
for $g'_i=l^{-1}g_il$.
Therefore, the distance between two vertices in the fiber is at most $2\cdot\max\{\abs{g'_i}\}$.
Indeed, the distance between $\group Llg'_i$ and $\group Llg'_j$ is by the triangular inequality less than or equal to $d(\group Llg'_i,\group L)+d(\group Llg'_j,\group L)$.
On the other side, the fiber over the vertex $\group Hf$ is
\begin{align*}
\enstq{\group Lfg}{\group Hf=\group Hfg}&=\enstq{\group Lfg}{g\in f^{-1}\group Hf}\\
&=\enstq{\group Lfl^{-1}gl}{g\in \group H}\\
&=\{\group Lfg'_1,\dots, Lfg'_k\}.
\end{align*}
Indeed, we have $\group Lfg=\group Lfh$ if and only if $fgh^{-1}f^{-1}$ belongs to $\group L$, if and only if $gh^{-1}$ belongs to $f^{-1}\group Lf=l^{-1}\group Ll$ if and only if $\group Llg=\group Llh$.
Therefore, the $\group Lfg'_i$'s consists of $k$ distinct points, and they are all the fiber over $\h f$.
Hence the distance between two vertices in the fiber over $\group Hf$ is also at most $2\cdot\max\{\abs{g'_i}\}$.
\end{proof}

We are now going to use these two lemmas to prove a classical result about Cayley graphs and small extensions of it.
Recall that a subgroup is \defi{almost normal} if it has only finitely many conjugates and \defi{nearly normal} if it has finite index in its normalizer.
Let us  call an automorphism $\vphi$ of $\Gamma_2$ \emph{compatible with the covering} $\pi\colon\Gamma_1\twoheadrightarrow\Gamma_2$ if there exists an automorphism $\tilde\vphi$ of $\Gamma_1$ such that $\vphi\pi=\pi\tilde\vphi$.
\begin{theorem}\label{thmQIso}
Let $\group A$ be a group with generating system $X$.
Let $\group N\leq \h\leq\group A$ be two subgroups such that $\group N$ has finite index in $\h$.
Then the graphs $\Schrei{A}{N}{X}$ and $\Schrei{A}{H}{X}$ are quasi-isometric if one of the following assumptions holds:
\begin{enumerate}
\item
$\group N$ is almost normal and $\h$ is nearly normal;
\item
$\group N$ and $\h$ are almost normal;\label{pointDeux}
\item
$\Schrei{A}{H}{X}$ is almost transitive by automorphisms compatible with the covering;
\item
$\group N$ is normal, $\h/\group N$ is cyclic of prime order and $\Schrei{A}{H}{X}$ is almost transitive.
\end{enumerate}
\end{theorem}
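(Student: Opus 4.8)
The plan is to route all four cases through a single mechanism. By Lemma~\ref{StrongCovering} the inclusion $\group N\leq\h$ yields an $X$-covering $\vphi\colon\Schrei{A}{N}{X}\to\Schrei{A}{H}{X}$, which by Lemma~\ref{lemmaDegree} has finite degree $[\h:\group N]$; in particular every fibre $\vphi^{-1}(v)$ is a finite set of vertices, hence of finite diameter. If one produces a constant $B$ with $\diam\bigl(\vphi^{-1}(v)\bigr)\leq B$ for \emph{all} vertices $v$, then Lemma~\ref{lemmaQIso} shows $\vphi$ is a quasi-isometry (with $A=1$, $C=0$), which is exactly the assertion. So in each case it suffices to bound the fibre diameters uniformly.

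For cases (1) and (2) I would apply Lemma~\ref{lemmaQISchreier} with $\group L=\group N$ and $\group H=\h$: it says that for each fixed $l\in\group A$ the fibres over all vertices $\h f$ with $fl^{-1}\in K\defegal N_{\group A}(\h)\cap N_{\group A}(\group N)$ have diameter bounded by a constant depending only on $l$. So it is enough that $\group A$ be a union of finitely many right cosets of $K$; one then takes coset representatives $l_1,\dots,l_m$ and the maximum of the corresponding bounds. In case (2), both $N_{\group A}(\group N)$ and $N_{\group A}(\h)$ have finite index in $\group A$, hence so does $K$. In case (1), $N_{\group A}(\group N)$ has finite index, and near normality of $\h$ puts $\h$ at finite index inside a normal subgroup $\group M\unlhd\group A$ (its normal closure), so $[\group M:\group N]<\infty$ as well; then $(\group N,\group M)$ falls under case (2), the covering $\Schrei{A}{N}{X}\to\Schrei{A}{M}{X}$ has uniformly bounded fibres, and since each fibre of $\vphi$ sits inside a fibre of this composed covering, $\vphi$ does too.

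Case (3) is the most transparent: if $\psi$ is an automorphism of $\Schrei{A}{H}{X}$ compatible with $\vphi$, a lift $\tilde\psi$ is a graph automorphism of $\Schrei{A}{N}{X}$, hence an isometry, and restricts to a bijection $\vphi^{-1}(v)\to\vphi^{-1}(\psi(v))$; thus $\diam\vphi^{-1}(v)=\diam\vphi^{-1}(\psi(v))$. Almost transitivity by such automorphisms then pins every fibre diameter to one of the finitely many numbers $\diam\vphi^{-1}(u)$, $u\in V_0$, and $B$ is their maximum. For case (4): as $\group N$ is normal, $\Schrei{A}{N}{X}$ is the Cayley graph $\Cayley{A/N}{X}$, and the fibre over a vertex $\h f$ is a right coset of the order-$p$ subgroup $\h/\group N$, whose diameter one checks depends on $f$ only through the conjugate subgroup $f^{-1}\h f$ (it equals the largest length of an element of $f^{-1}\h f/\group N$ in $\group A/\group N$). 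By almost transitivity each such vertex is carried by a graph automorphism onto one of finitely many $\h l_j$, so Proposition~\ref{PropTransitive} makes $f^{-1}\h f$ length-isomorphic to one of the $l_j^{-1}\h l_j$; using that the index $[\h:\group N]=p$ is prime, one then checks that this length-isomorphism can be taken to carry $\group N$ onto $\group N$, so that the corresponding largest lengths in $\group A/\group N$ agree, and again only finitely many fibre-diameter values occur.

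Cases (2) and (3) are routine once the reduction to bounded fibre diameters is granted; the substance lies in (1) and (4). For (1) the single point to establish is that $\h$ lies at finite index in a normal subgroup of $\group A$ (its normal closure, by near normality); granting this, $\group N$ and that overgroup are both almost normal, so case (2) applies to them and the fibre bound descends. I expect (4) to be the real obstacle: everything there hinges on showing the length-isomorphisms between conjugates of $\h$ produced by Proposition~\ref{PropTransitive} are compatible with the fixed normal subgroup $\group N$, which is precisely where primality of $[\h:\group N]$ must be used — every other step being a formal consequence of Lemmas~\ref{lemmaQISchreier} and~\ref{lemmaQIso} together with the transitivity criterion of Corollary~\ref{CorTransitivity}.
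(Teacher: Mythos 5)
Your overall skeleton --- reduce each case to a uniform bound on fibre diameters and invoke Lemma \ref{lemmaQIso}, using Lemma \ref{lemmaQISchreier} for the normality hypotheses and orbit arguments for the transitivity ones --- is exactly the paper's, and your cases (2) and (3) are correct; for (3) your lifting argument (a lift $\tilde\psi$ is an isometry of $\Schrei{A}{N}{X}$ carrying fibres to fibres) is if anything cleaner than the paper's reformulation via length-preserving isomorphisms with $\alpha(\group N)=\group N$. The two cases you yourself single out as carrying the substance, however, both contain genuine gaps. For case (1), your reduction rests on the claim that near normality of $\h$ places $\h$ at finite index in its normal closure. That is false: $\langle a\rangle\leq\group F_2=\langle a,b\rangle$ equals its own normalizer, hence is nearly normal, yet has infinite index in $\langle\langle a\rangle\rangle$ (the quotient by the normal closure is $\Z$, so the normal closure is free of infinite rank). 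The correct intermediate group is $\group M=N_{\group A}(\h)$ --- finiteness of $[\group M:\h]$ is then the \emph{definition} of near normality --- and the paper applies the almost-normal case to the pair $(\group N,\group M)$ and sandwiches $d_{\group H}$ between $d_{\group M}$ and $d_{\group N}$ using the tower $\Schrei{A}{N}{X}\to\Schrei{A}{H}{X}\to\Schrei{A}{M}{X}$. Your observation that fibres of $\vphi$ sit inside fibres of the composed covering works with this $\group M$, but not with the normal closure.

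For case (4), you reduce everything to showing that the length-isomorphism between $f^{-1}\h f$ and $l_j^{-1}\h l_j$ can be chosen to carry $\group N$ onto $\group N$, declare this the real obstacle, and leave it unproved; I do not see how primality of $[\h:\group N]$ delivers it, since a priori the isomorphism could send $\group N$ to a different index-$p$ normal subgroup of the conjugate. The point is that this step is not needed. What the argument actually requires is only that the minimal length $m_f$ of an element of $f^{-1}\h f\setminus\group N$ be constant along an orbit, and this follows from the length-preserving bijection alone by comparing balls: $\group N$ is contained in both conjugates, and for $r<m_h$ the ball of radius $r$ meets $h^{-1}\h h$ only in $\group N$, so by equality of cardinalities it meets $f^{-1}\h f$ only in $\group N$ as well, giving $m_f\geq m_h$ and, by symmetry, equality. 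Primality enters only afterwards, to guarantee that \emph{any} element $g'$ of $f^{-1}\h f\setminus\group N$ of minimal length generates $f^{-1}\h f$ modulo $\group N$, so the fibre over $\h f$ is $\{\group Nf g'^{\,i}\}_{0\leq i<p}$ and has diameter at most $(p-1)m_f$. So the crux you left open is both unresolved in your write-up and avoidable; as it stands, cases (1) and (4) are not proved.
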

For the proofs, we have $\group A=\group G/\group L$ and, using the correspondence theorem, it is therefore sufficient to prove the assertion when $\group A=\G$ is a free product of copies of $\Z$ and of $\Z/2\Z$.
See Lemmas \ref{lemmaUn} to \ref{lemmaCinq} for the proofs in this case.
As an immediate corollary of the theorem, we obtain
\begin{proposition}
Let $\group A$ be a group with generating system $X$ and $\group B$ a quotient by a finite normal subgroup.
Then, Cayley graphs $\Cayley{A}{X}$ and $\Cayley{B}{X}$ are quasi-isometric.
\end{proposition}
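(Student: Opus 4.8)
The plan is to realize both Cayley graphs as Schreier graphs of $\group A$ and then to apply Theorem~\ref{thmQIso}. Write $\group B=\group A/\group F$, where $\group F$ is a finite normal subgroup of $\group A$, and (as customary) keep the name $X$ for the image generating system of $\group B$. First I would recall that $\Cayley{A}{X}$ is the Schreier graph of the trivial subgroup $\{1\}$ of $\group A$, and that, $\group F$ being normal, $\Cayley{B}{X}=\Cayley{A/F}{X}$ is isomorphic to the Schreier graph $\Schrei{A}{F}{X}$; this last isomorphism is the identification $\Schrei{A}{H}{X}\iso\Cayley{A/H}{X}$ for $\h\unlhd\group A$ recorded in Section~\ref{SectNotations}.

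Next I would invoke Theorem~\ref{thmQIso} with $\group N\defegal\{1\}$ and $\h\defegal\group F$, so that $\group N\leq\h\leq\group A$. The index $[\h:\group N]=\abs{\group F}$ is finite, and $\group N$ and $\h$ are both normal in $\group A$, so each has a single conjugate and is in particular almost normal; hence assumption~(\ref{pointDeux}) of Theorem~\ref{thmQIso} holds. The theorem then yields that $\Cayley{A}{X}$ and $\Cayley{B}{X}$ are quasi-isometric. Note that assumption~(1) of the theorem would instead ask that $\group F$ be nearly normal, i.e. of finite index in $\group A$, which is not assumed here, so it is assumption~(\ref{pointDeux}) that one uses.

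No genuine difficulty arises, since the statement is a direct specialization of Theorem~\ref{thmQIso} (whose proof is carried out in Lemmas~\ref{lemmaUn}--\ref{lemmaCinq}). The only point that deserves a word of care is the identification $\Cayley{B}{X}\iso\Schrei{A}{F}{X}$: one should check that passing to the quotient by $\group F$ and then forming the Cayley graph of $\group B$ with the image generators produces exactly the labeled graph $\Schrei{A}{F}{X}$, including the convention specifying which edges are undirected --- namely, a generator $x$ is treated as an involution in $\group B$ precisely when its image has order $2$. This is the compatibility already discussed right after~\eqref{group}, so nothing new is needed.
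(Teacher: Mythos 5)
Your proof is correct and follows exactly the paper's argument: the paper likewise obtains the proposition by applying part~(2) of Theorem~\ref{thmQIso} with $\group N=\{1\}$ and $\h$ the finite normal subgroup, using the identifications $\Cayley{A}{X}\iso\Schrei{A}{\{1\}}{X}$ and $\Cayley{B}{X}\iso\Schrei{A}{F}{X}$. Your additional remark explaining why assumption~(1) would not suffice is accurate but not needed.
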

\begin{proof}
Simply apply part \ref{pointDeux} of the theorem with $\group N=\{1\}$ and $\h$ normal.
\end{proof}

\begin{lemma}\label{lemmaUn}
Let $\group N\leq\group H\leq \group G$ with $\group N$ of finite index in $\h$, $\group N$ almost normal and $\h$ almost normal or nearly normal.
Then, Schreier graphs $\Schrei{G}{N}{X}$ and $\Schrei{G}{H}{X}$ are quasi-isometric.
\end{lemma}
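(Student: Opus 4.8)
The plan is to extract the quasi-isometry from the covering machinery just developed. Since $\group N$ has finite index in $\h$, Lemmas~\ref{StrongCovering} and~\ref{lemmaDegree} give an $X$-covering $\vphi\colon\Schrei{G}{N}{X}\to\Schrei{G}{H}{X}$ of finite degree $[\h:\group N]$, and by Lemma~\ref{lemmaQIso} it is enough to bound $\diam\bigl(\vphi^{-1}(v)\bigr)$ by one constant over all vertices $v$ of $\Schrei{G}{H}{X}$; then $\vphi$ itself is the desired quasi-isometry. To produce the bound I would apply Lemma~\ref{lemmaQISchreier} with $\group L=\group N$: for each fixed $l\in\G$ the fibres $\vphi^{-1}(\h f)$ over vertices $\h f$ with $fl^{-1}\in N_{\G}(\h)\cap N_{\G}(\group N)$ have diameter bounded in terms of $l$ alone. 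The coset of $f$ modulo $N_{\G}(\h)\cap N_{\G}(\group N)$ is determined by the pair of conjugates $\bigl(f^{-1}\h f,\,f^{-1}\group N f\bigr)$, so it suffices that only finitely many such pairs occur; then one takes the maximum of the corresponding finitely many bounds.

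When $\h$ is almost normal this finishes the argument at once, since then $\h$ and $\group N$ both have only finitely many conjugates. The case where $\h$ is only nearly normal (and not almost normal) is the main obstacle: $N_{\G}(\h)$, hence $N_{\G}(\h)\cap N_{\G}(\group N)$, then has infinite index, infinitely many of the pairs above occur, and in fact the fibres of $\vphi$ genuinely fail to be of bounded diameter, so Lemma~\ref{lemmaQIso} is not directly applicable to $\vphi$. To get around this I would introduce the normal core $\group N_0\defegal\bigcap_{h\in\h}h^{-1}\group N h$ of $\group N$ in $\h$. Every conjugate $h^{-1}\group N h$ ($h\in\h$) lies in $\h$ with index $[\h:\group N]$, and $\group N$ being almost normal there are only finitely many of them, so $\group N_0$ still has finite index in $\h$; moreover $\group N_0$ is normal both in $\h$ and in $\group N$. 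Consequently the $X$-coverings $\Schrei{G}{N_0}{X}\to\Schrei{G}{H}{X}$ and $\Schrei{G}{N_0}{X}\to\Schrei{G}{N}{X}$ are both regular, with finite deck groups $\h/\group N_0$ and $\group N/\group N_0$, so that each of $\Schrei{G}{H}{X}$ and $\Schrei{G}{N}{X}$ is the quotient of $\Schrei{G}{N_0}{X}$ by a finite group of label-preserving automorphisms.

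The remaining point — and the crux — is that a Schreier graph over $\G$ is quasi-isometric to its quotient by a finite group of $X$-automorphisms; granting this, $\Schrei{G}{N_0}{X}$ is quasi-isometric to both $\Schrei{G}{H}{X}$ and $\Schrei{G}{N}{X}$, and by transitivity so are these two to each other. For that last fact I would use the virtually free structure of $\G$: since $\Cayley{G}{X}$ is a tree and is the universal cover of every Schreier graph over $\G$, the finite automorphism group lifts to a finite group acting on this tree, which therefore fixes a point; the quotient is then isometric to a fundamental subtree, and because the acting group is finite this subtree still has exponential branching and hence is quasi-isometric to the whole tree-like graph. I expect this step — converting a finite-group quotient into a quasi-isometry despite the unbounded fibres of $\vphi$ — to be the hardest part, and the one that cannot be reached through Lemmas~\ref{lemmaQIso} and~\ref{lemmaQISchreier} alone.
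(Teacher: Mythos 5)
Your treatment of the case where $\h$ is almost normal is exactly the paper's argument: Lemma \ref{lemmaQISchreier} bounds the fibre diameters over each class of vertices determined by the pair of conjugates $\bigl(f^{-1}\h f,\,f^{-1}\group Nf\bigr)$, almost normality of both subgroups leaves only finitely many such pairs, and Lemma \ref{lemmaQIso} concludes. The nearly normal case, however, contains a genuine gap, and it sits precisely at the step you flag as the crux. The claim that a Schreier graph over $\G$ is quasi-isometric to its quotient by a finite group of $X$-automorphisms is false, and the paper itself contains a counterexample: in Example \ref{exampleUn} one has $\group N\leq\h\leq\group F_2$ with $[\h:\group N]=2$, so $\group N\trianglelefteq\h$ and the $X$-covering $\Schrei{F_2}{N}{\{x,y\}}\to\Schrei{F_2}{H}{\{x,y\}}$ is regular with deck group $\Z/2\Z$ acting by $X$-automorphisms, yet the cover has four ends and the base only two, so they are not quasi-isometric. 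A finite group acting freely on an infinite graph can have orbits of bounded cardinality but unbounded diameter, which is exactly what happens there. Your proposed repair via the tree $\Cayley{G}{X}$ does not work either: the lift of the deck group of $\Schrei{G}{N_0}{X}\to\Schrei{G}{H}{X}$ to the universal cover is an extension of the finite group $\h/\group N_0$ by the infinite group $\group N_0$, hence need not be finite and need not fix a point, and ``exponential branching'' is not a property that identifies the quotient with the cover up to quasi-isometry. Note also that applying the almost-normal case to the pair $(\group N_0,\h)$ instead would be circular, since $\h$ is still only nearly normal there.

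What the paper does instead is go up rather than down: it sets $\group M\defegal N_{\G}(\h)$, so that $\group N\leq\h\leq\group M$ is a chain of finite-index inclusions with $\group N$ almost normal, applies the already-proved first case to the composite covering $\Schrei{G}{N}{X}\xrightarrow{\psi}\Schrei{G}{H}{X}\xrightarrow{\vphi}\Schrei{G}{M}{X}$ to get $d_{\group N}(v,w)-B\leq d_{\group M}(\vphi\psi(v),\vphi\psi(w))$, and then sandwiches $d_{\group H}(\psi(v),\psi(w))$ between $d_{\group M}(\vphi\psi(v),\vphi\psi(w))$ and $d_{\group N}(v,w)$ using the fact that coverings are $1$\tiret Lipschitz; this shows directly that $\psi$ is a quasi-isometry. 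In particular your assertion that in the nearly normal case the fibres of $\psi$ ``genuinely fail to be of bounded diameter'' is also incorrect: the sandwich estimate forces them to have diameter at most $B$.
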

\begin{proof}
By Lemma \ref{lemmaQISchreier},
\[
B_{l}=\sup\enstq{\diam(\vphi^{-1}(\h f))}{f\in\G,f^{-1}\h f=l^{-1}\h l, f^{-1}\group Nf=l^{-1}\group Nl}
\]
is finite.
If $\group H$ is almost normal, and since $\group N$ is almost normal too, there is only finitely many couples of the form $(l^{-1}\h l,l^{-1}\group N l)$.
Therefore, in $B\defegal\sup\enstq{B_l}{l\in \G}$, we only have finitely many different terms and $B$ is finite. We conclude using Lemma \ref{lemmaQIso}.

If $\group H$ is nearly normal, let $M$ denote its normalizer.
We have a sequence of subgroups with finite index inclusion
\[
\group N\hookrightarrow\group H\hookrightarrow\group M
\]
 with $\group N$ almost normal and $\group M$ normal.
 By Lemmas \ref{StrongCovering} and \ref{lemmaDegree}, this is equivalent to the following sequence of coverings of finite degree
 \[
 \Schrei{G}{N}{X}\xrightarrow{\psi}\Schrei{G}{H}{X}\xrightarrow{\vphi}\Schrei{G}{M}{X}.
 \]
 
Therefore, the first part of this lemma gives us
\[
d_{\group N}(v,w)-B\leq d_{\group M}(\vphi\circ \psi(v),\vphi\circ \psi(w))\leq d_{\group H}(\psi(v),\psi(w))\leq d_{\group N}(v,w),
\]
where $d_{\group N}(v,w)$ is the distance in $\Schrei{G}{N}{X}$.
\end{proof}
\begin{lemma}
Let $\group N\leq\group H\leq \G$ with $\group N$ normal and $\group H/\group N$ cyclic of prime order in $\group G/\group N$.
If the graph $\Schrei{G}{H}{X}$ is almost transitive, then it is quasi-isometric to $\Schrei{G}{N}{X}$.
\end{lemma}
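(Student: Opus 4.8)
The strategy is to realise $\Schrei{G}{N}{X}$ as a finite covering of $\Schrei{G}{H}{X}$ and to feed this into the fibre\tiret diameter criterion of Lemma~\ref{lemmaQIso}. As explained before Lemma~\ref{lemmaUn}, we may assume $\group A=\G$. Since $\group N\leq\h$, Lemmas~\ref{StrongCovering} and~\ref{lemmaDegree} provide an $X$\tiret covering $\pi\colon\Gamma_1\defegal\Schrei{G}{N}{X}\to\Gamma_2\defegal\Schrei{G}{H}{X}$ of finite degree $p\defegal[\h:\group N]$, a prime. By Lemma~\ref{lemmaQIso} it is enough to find a uniform bound on $\diam(\pi^{-1}(v))$ as $v$ ranges over the vertices of $\Gamma_2$; that each fibre already has finite diameter, uniformly over each $X$\tiret orbit, is Lemma~\ref{lemmaQISchreier} applied with $\group L=\group N$ (so that $N_\G(\group N)=\G$), and the remaining task is to make this uniform over all of $\Gamma_2$. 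The key remark is that whenever an automorphism $\vphi$ of $\Gamma_2$ is compatible with $\pi$ — i.e.\ lifts to some $\tilde\vphi\in\Aut(\Gamma_1)$ — the map $\tilde\vphi$ is an isometry of $\Gamma_1$ carrying the fibre $\pi^{-1}(v)$ bijectively onto $\pi^{-1}(\vphi(v))$, so that
\[
\diam\bigl(\pi^{-1}(v)\bigr)=\diam\bigl(\pi^{-1}(\vphi(v))\bigr).
\]
Consequently, if \emph{every} automorphism of $\Gamma_2$ lifts to $\Gamma_1$, then $v\mapsto\diam(\pi^{-1}(v))$ is constant on each orbit of $\Aut(\Gamma_2)$; since $\Gamma_2$ is almost transitive there are only finitely many such orbits, so this function is bounded and we are done. (Equivalently, in that case we are in the situation of the third assumption of Theorem~\ref{thmQIso}.)

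So the proof reduces to showing that every automorphism of $\Gamma_2$ lifts to $\Gamma_1$. First dispose of the easy sub\tiret case: if $\h$ has only finitely many conjugates in $\G$, then $\h$ is almost normal and $\group N$, being normal, is a fortiori almost normal, so Lemma~\ref{lemmaUn} already gives the quasi\tiret isometry directly. Hence assume that $\h$ has infinitely many conjugates. I would now translate the lifting problem into group theory as follows. Because $\group N$ is normal in $\G$, the group of closed reduced paths of $\Gamma_1$ based at any vertex $\group N h$ has label set $h^{-1}\group N h=\group N$ (Lemma~\ref{lemmaPathWord}), so the ``covering subgroup'' attached to $\pi$ is the \emph{same} subgroup $\group N$ at every vertex of $\Gamma_2$. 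By the usual lifting criterion for coverings, an automorphism $\vphi$ of $\Gamma_2$ sending $\h f$ to $\h g$ therefore lifts to $\Gamma_1$ if and only if the length\tiret preserving isomorphism $\vphi_*\colon f^{-1}\h f\to g^{-1}\h g$ between groups of reduced loops that it induces (as in the proof of Proposition~\ref{PropTransitive}) satisfies $\vphi_*(\group N)=\group N$, as subgroups of $\G$.

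Thus everything comes down to the following purely algebraic statement: \emph{a length\tiret preserving isomorphism between two conjugates of $\h$ in $\G$ carries $\group N$ onto $\group N$.} This is the step I expect to be the main obstacle. It must use both hypotheses essentially: that $[\h:\group N]=p$ is prime (so that $\group N$ is a maximal subgroup of each conjugate of $\h$, and any invertibility modulo $p$ one needs is available), and that $\group N$ is normal in all of $\G$ — e.g.\ $\group N$ is then the normal core in $\G$ of every conjugate of $\h$, a property an arbitrary index\tiret$p$ subgroup need not have. Concretely I would extend $\vphi_*$ to a length- and initial\tiret segment\tiret preserving bijection $\gamma\colon\G\to\G$ via Lemma~\ref{LemmaExtension} and analyse how $\gamma$ interacts with the partition of $\G$ into cosets of $\group N$, aiming to show that $\gamma$ must carry the cosets of $\group N$ contained in $f^{-1}\h f$ to cosets of $\group N$, which forces $\vphi_*(\group N)=\group N$. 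Once this is established, the first paragraph finishes the proof.
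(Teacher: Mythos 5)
Your reduction is sound up to one step, but that step is a genuine gap: everything hinges on the claim that every automorphism of $\Gamma_2=\Schrei{G}{H}{X}$ lifts to $\Gamma_1=\Schrei{G}{N}{X}$, equivalently that every induced length\tiret preserving isomorphism $\vphi_*\colon f^{-1}\h f\to g^{-1}\h g$ satisfies $\vphi_*(\group N)=\group N$. You identify this yourself as ``the main obstacle'' and only sketch a plan for attacking it (extend via Lemma~\ref{LemmaExtension}, analyse cosets of $\group N$, ``aiming to show\dots''), so the proof is not complete. Nor is it clear the claim is true: a length\tiret preserving isomorphism between conjugates of $\h$ is an intrinsic datum of the pair of subgroups, whereas a characterization of $\group N$ such as ``normal core of $\h$ in $\G$'' is defined via $\G$\tiret conjugation, which $\vphi_*$ has no reason to respect, and $g^{-1}\h g$ may contain several normal subgroups of index $p$. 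The preliminary case split (if $\h$ is almost normal, use Lemma~\ref{lemmaUn}) does not help, since the hard case is exactly the one where the lifting claim must be established.

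The paper's proof shows the detour through lifting is unnecessary. Fix a vertex $\group Hh$ and let $g$ have minimal length $m$ in $h^{-1}\group Hh\setminus\group N$; since $\h/\group N$ has prime order, $h^{-1}\group Hh=\group N\langle g\rangle$, and since $\group N$ is normal the fiber over $\group Hh$ is $\{\group Nhg^i\}_{0\le i<p}$, of diameter at most $(p-1)m$. For $\group Hf$ in the same $\Aut(\Gamma_2)$\tiret orbit, the length\tiret preserving bijection $h^{-1}\group Hh\to f^{-1}\group Hf$ is \emph{not} required to preserve $\group N$; one only extracts from it the existence of some $g'\in f^{-1}\group Hf\setminus\group N$ with $\abs{g'}=m$ (because $\group N$ is literally the same subgroup of both conjugates, the count of elements of length at most $m$ lying outside $\group N$ is the same on both sides), and primality again gives $f^{-1}\group Hf=\group N\langle g'\rangle$, hence the same bound $(p-1)m$ on the fiber over $\group Hf$. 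Almost transitivity supplies finitely many orbits and Lemma~\ref{lemmaQIso} concludes. Your first paragraph (fiber diameters constant on orbits, finitely many orbits) is the right frame; the input to it should be this elementary counting observation rather than the unproven lifting statement.
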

\begin{proof}
By Lemma \ref{lemmaQIso}, it is sufficient to find a universal bound $B$ on the distance between vertices in the same fiber.
The graph being almost transitive, there is a finite number of class of vertices under the action of its automorphism group.
It is thus enough to find bounds for fibers over vertices in the same class, the bound $B$ being the maximum over all these bounds.

Choose a vertex $\group Hh$ in $\Schrei{G}{H}{X}$ and let $g$ be an element of minimal length in $h^{-1}\group Hh-\group N$.
Due to the structure of $\group H$, we have that $h^{-1}\group Hh=\group N\langle g\rangle$ and that the fiber over the vertex $\group Hh$ is $\{\group Nh,\group Nhg,\dots,\group Nhg^{p-1}\}$.
Hence the distance between two of its elements is at most $(p-1)\abs g$.
Indeed, the distance between $\group Nhg^i$ and $\group Nhg^j$ is at most $\abs{i-j}\abs g$.
If $\group Hf$ is in the same transitivity class as $\group Hh$, then there is a bijection that preserves lengths between $h^{-1}\group Hh$ and $f^{-1}\group Hf$.
Hence, we can choose $g'$ in $f^{-1}\group Hf-\group N$ of same length as $g$.
Since $\h/\group N$ is cyclic of prime order, we have $f^{-1}\group Hf=\group N\langle g'\rangle$ and, as before, the distance between two elements of the fiber is at most $(p-1)\abs{g'}=(p-1)\abs{g}$.
\end{proof}
\begin{lemma}\label{lemmaCinq}
Let $\group N\leq\group H\leq \G$ be as in Theorem \ref{thmQIso}.
If the graph $\Schrei{G}{H}{X}$ is almost transitive by automorphisms compatible with the covering, then it is quasi-isometric to $\Schrei{G}{N}{X}$.
\end{lemma}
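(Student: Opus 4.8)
The plan is to apply the fiber-diameter criterion of Lemma \ref{lemmaQIso}. Since $\group N$ has finite index $k\defegal[\h:\group N]$ in $\h$, Lemmas \ref{StrongCovering} and \ref{lemmaDegree} give an $X$-covering $\psi\colon\Schrei{G}{N}{X}\to\Schrei{G}{H}{X}$ of finite degree $k$, so each fiber $\psi^{-1}(v)$ has exactly $k$ vertices; as $\Schrei{G}{N}{X}$ is connected, this forces $\diam(\psi^{-1}(v))<\infty$ for every individual vertex $v$. By Lemma \ref{lemmaQIso} it therefore suffices to produce a single constant $B$ with $\diam(\psi^{-1}(v))\leq B$ for all $v$; this then yields a quasi-isometry with $A=1$ and $C=0$.

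The heart of the argument is that the function $v\mapsto\diam(\psi^{-1}(v))$ is constant on orbits of the group of automorphisms of $\Schrei{G}{H}{X}$ compatible with $\psi$. Indeed, let $\vphi$ be such an automorphism, with lift $\tilde\vphi\in\Aut(\Schrei{G}{N}{X})$ satisfying $\vphi\psi=\psi\tilde\vphi$. For $w\in\psi^{-1}(v)$ this relation gives $\psi(\tilde\vphi(w))=\vphi(\psi(w))=\vphi(v)$, so $\tilde\vphi$ maps $\psi^{-1}(v)$ into $\psi^{-1}(\vphi(v))$; applying the same reasoning to $\tilde\vphi^{-1}$, which lifts $\vphi^{-1}$ since $\psi\tilde\vphi^{-1}=\vphi^{-1}\psi$, shows $\tilde\vphi$ restricts to a bijection $\psi^{-1}(v)\to\psi^{-1}(\vphi(v))$. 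Being a graph automorphism, $\tilde\vphi$ is an isometry of $\Schrei{G}{N}{X}$ for the path metric, whence $\diam(\psi^{-1}(v))=\diam(\psi^{-1}(\vphi(v)))$.

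Finally I would invoke almost transitivity: there is a finite set $V_0$ of vertices of $\Schrei{G}{H}{X}$ such that every vertex can be sent into $V_0$ by an automorphism compatible with $\psi$. By the previous paragraph $\diam(\psi^{-1}(v))=\diam(\psi^{-1}(v_0))$ for some $v_0\in V_0$, so $B\defegal\max\enstq{\diam(\psi^{-1}(v_0))}{v_0\in V_0}$ is a finite uniform bound, and Lemma \ref{lemmaQIso} concludes. The only steps requiring genuine care are the verification that a compatible automorphism really permutes the fibers and that its lift is an isometry; everything else is bookkeeping. One should also note that the hypothesis must be used with exactly this meaning — almost transitivity by \emph{compatible} automorphisms — since an arbitrary automorphism of $\Schrei{G}{H}{X}$ need not lift to $\Schrei{G}{N}{X}$ and hence would give no control on the fibers.
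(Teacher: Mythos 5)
Your proof is correct, and it reaches the conclusion by a genuinely different route from the paper's. You work entirely at the level of graphs: the compatible automorphism $\vphi$ admits a lift $\tilde\vphi\in\Aut(\Schrei{G}{N}{X})$, which is an isometry for the path metric and (as you verify via $\psi\tilde\vphi^{-1}=\vphi^{-1}\psi$) restricts to a bijection between fibers, so $v\mapsto\diam\bigl(\psi^{-1}(v)\bigr)$ is constant on orbits of the compatible automorphisms; almost transitivity then yields the uniform bound needed for Lemma \ref{lemmaQIso}. The paper instead translates everything into the subgroup language of the surrounding lemmas: a compatible automorphism corresponds to a length-preserving isomorphism $\alpha\colon\h\to f^{-1}\h f$ satisfying $\alpha(\group N)=\group N$, the fiber over $\h f$ is computed explicitly as $\{\group Nf\alpha(g_1),\dots,\group Nf\alpha(g_k)\}$ with $\abs{\alpha(g_i)}=\abs{g_i}$, and the triangle-inequality estimate already used in Lemma \ref{lemmaQISchreier} gives the concrete bound $2\max_i\abs{g_i}$ on every fiber in the orbit. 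Your version is shorter and avoids the length-preservation machinery entirely --- it uses only that the lift exists and is a graph automorphism --- whereas the paper's version produces an explicit numerical bound and stays stylistically consistent with Lemmas \ref{lemmaQISchreier} and \ref{lemmaUn}. Both arguments finish identically by invoking Lemma \ref{lemmaQIso}, and your closing remark that the compatibility hypothesis is exactly what makes the fibers controllable is well taken.
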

\begin{proof}
Thinking in terms of subgroups, the automorphism $\vphi$ of $\Schrei{G}{H}{X}$ corresponds to a length-preserving isomorphism $\alpha\colon\group H\to f^{-1}\group Hf$.
The compatibility with the covering is then equivalent to $\alpha(\group N)=\group N$.
Fibers over vertices $\group H$ and $\group Hf$ are respectively $\enstq{\group Ng}{g\in \group H}$ and $\enstq{\group Nf\alpha(g)}{g\in \group H}$.

We have
\begin{align*}
\group N=\group Ng&\Leftrightarrow g\in N\\
&\Leftrightarrow \alpha(g)\in\alpha(\group N)=\group N\\
&\Leftrightarrow \group Nf=\group Nf\alpha(g).
\end{align*}
Hence, the application $\group Ng\mapsto\group Nf\alpha(g)$ is a well-defined bijection between the fibers.
Therefore, if the fiber over $\group H$ is given by $\{\group Ng_1,\dots,\group Ng_k\}$, the fiber over $f^{-1}\group Hf$ is $\{\group Nf\alpha(g_1),\dots,\group Nf\alpha(g_k)\}$ with $\abs{g_i}=\abs{\alpha(g_i)}$.

Once again, we conclude using Lemma \ref{lemmaQIso}.
\end{proof}

It is natural to ask if Theorem \ref{thmQIso} can be extended.
It may be possible, but not in full generality.
Indeed, there are examples of subgroups $\group N\leq \group H$ with $\group N$ of finite index in $\group H$ but such that $\Schrei{G}{N}{X}$ and $\Schrei{G}{H}{X}$ are not quasi-isometric.
There are even such examples with $\group N$ or $\h$ normal.

In order to show that some graphs are not quasi-isometric, we will use the notion of ends.
There are different equivalent definitions for the ends of a graph, but for our purpose it is sufficient to know that the number of ends of a locally finite graph $\Gamma$ is the maximal number of infinite connected components of $\Gamma-\Delta$ where $\Delta$ is a finite subgraph (not necessarily connected).
The number of ends is invariant under quasi-isometries.
For a Cayley graph, the number of ends is either $0$ (if and only if the graph is finite), $1$ ($\Z^d$ with $d\geq 2$ for example), $2$ (if and only if the group is virtually $\Z$) or uncountable ($\group F_n$ for $n\geq 2$ for example).

We now exhibit two examples of $\group N\leq \group H$ such that $\group N$ is of finite index in $\h$ but the graphs $\Schrei{G}{N}{X}$ and $\Schrei{G}{H}{X}$ are not quasi-isometric.
Instead of describing the subgroups $\h$ and $\group N$ explicitly, we will simply describe their Schreier graphs and show that there exists an $X$-covering of finite degree between them.
Indeed, by Lemmas \ref{StrongCovering} and \ref{lemmaDegree}, this implies that $\group N$ is a subgroup of finite index of $\group H$.
\begin{example}\label{exampleUn}
The graphs of Figure \ref{GraphesNonQIUn} correspond to subgroups $\group N\leq \group H\leq \langle x,y\rangle=\group F_2$ with $\group N$ of index two in $\h$.
Since $\Schrei{F_2}{H}{\{x,y\}}$ is $X$-transitive, the subgroup $\h$ is normal.
But $\Schrei{F_2}{N}{\{x,y\}}$ has four ends while the graph $\Schrei{F_2}{H}{\{x,y\}}$ has only two ends.
Therefore, the two graphs are not quasi-isometric.
\begin{figure}[!h]
\centering
\begin{tikzpicture}[scale=0.7,>=stealth]
  \SetVertexNoLabel
  \SetGraphUnit{2}
  \tikzset{LabelStyle/.style ={fill=white}}
  \foreach \k in {0,1}{
  	  \Vertex[x=-2*\GraphUnit,y=\k*\GraphUnit]{-2\k}
	  \foreach \l/\t in {-1/-2,0/-1,1/0,2/1}{
  		\Vertex[x=\l*\GraphUnit,y=\k*\GraphUnit]{\l\k}
		\Edge[style={->},label=$x$](\l\k)(\t\k)
  	}
  \node(G) at (-\GraphUnit*2.8,\k*\GraphUnit){};
  \Edge[style={dotted}](-2\k)(G)
  \node(D) at (\GraphUnit*2.8,\k*\GraphUnit){};
  \Edge[style={dotted}](2\k)(D)
  }
  \foreach \l in {-2,-1,1,2}{
  	\Loop[dist=1.5cm,dir=NO,label=$y$,style={thick,->}](\l1)
  	\Loop[dist=1.5cm,dir=SO,label=$y$,style={thick,<-}](\l0)
  }
  \Edges[style={<-,bend left},label=$y$](01,00,01)
  \AddVertexColor{black}{00}
  \AddVertexColor{black}{01}
  \begin{scope}[yshift=-0.5cm]
  \node(H) at (0,0){};
  \node(B) at (0,-1.5){};
  \Edge[style={->>,>=to}](H)(B)
  \end{scope}
  \begin{scope}[yshift=-2.5cm]
  \Vertex[x=-2*\GraphUnit,y=0]{-2}
  \Loop[dist=1.5cm,dir=SO,label=$y$,style={thick,<-}](-2)
  \foreach \l/\t in {-1/-2,0/-1,1/0,2/1}{
  		\Vertex[x=\l*\GraphUnit,y=0]{\l}
		\Edge[style={->},label=$x$](\l)(\t)
		\Loop[dist=1.5cm,dir=SO,label=$y$,style={thick,<-}](\l)
  	}
  \node(G) at (-2.8*\GraphUnit,0){};
  \Edge[style={dotted}](-2)(G)
  \node(D) at (\GraphUnit*2.8,0){};
  \Edge[style={dotted}](2)(D)
  \AddVertexColor{black}{0}
  \end{scope}
\end{tikzpicture}
\caption{An $X$-covering between two Schreier graphs over the free group of rank two. The root of the base graph and its two preimages are marked in black.}
\label{GraphesNonQIUn}
\end{figure}
\end{example}
\begin{example}\label{exampleDeux}
The graphs of Figure \ref{GraphesNonQITrois} correspond to subgroups $\group N\leq \group H\leq \presentation{x,a}{a^2}=\Z*\Z/2\Z$ with $\group N$ of index two in $\h$.
The covering is given by the central inversion in respect to the middle point between the two black vertices.
Since $\Schrei{\Z*\Z/2\Z}{N}{\{x,a\}}$ is $X$-transitive, the subgroup $\group N$ is normal and $\Schrei{\Z*\Z/2\Z}{N}{\{x,a\}}\iso\Cayley{(\Z*\Z/2\Z)/\group N}{\{x,a\}}$.
But $\Schrei{F_2}{N}{\{x,y\}}$ has two ends while $\Schrei{F_2}{H}{\{x,y\}}$ has only one end.
Therefore, the two graphs are not quasi-isometric.

Graphs of Figure \ref{GraphesNonQIDeux} shows a similar example, with $\group N\leq \group H\leq \langle x,y\rangle=\group F_2$.
Since $\Schrei{F_2}{N}{\{x,y\}}$ is almost $X$-transitive, the subgroup $\group N$ is this time almost normal.
In fact, $\group N$ has only two conjugates: itself (corresponding to the black vertex in Figure \ref{GraphesNonQIDeux}) and $y^{-1}Ny$ (corresponding to the dark gray vertex in Figure \ref{GraphesNonQIDeux}).
\begin{figure}[!h]
\centering
\begin{tikzpicture}[scale=0.6,>=stealth]
  \SetVertexNoLabel
  \SetGraphUnit{3}
  \tikzset{LabelStyle/.style ={fill=white}}
  \foreach \x in {-2,...,2}{
  	\foreach \y in {0,1}{
			\coordinate(\x\y) at (\x*\GraphUnit,\y*\GraphUnit){};
			\Vertex[Node]{\x\y}
 	 }
  }
  \foreach \x/\t in {-1/-2,0/-1,1/0,2/1}{
  	\Edge[label=$x$,style={->}](\x0)(\t0)
  	\Edge[label=$x$,style={->}](\t1)(\x1)
  }
  \foreach \x in {-2,...,2}{
  	 \Edges[label=$a$](\x0,\x1)
  }
  \foreach \y in {0,1}{
	\node(G) at (-2.8*\GraphUnit,\y*\GraphUnit){};
	\Edge[style={dotted}](-2\y)(G)
	\node(D) at (2.8*\GraphUnit,\y*\GraphUnit){};
	\Edge[style={dotted}](2\y)(D)
  }
  \AddVertexColor{black}{-11}
  \AddVertexColor{black}{00}
  \begin{scope}[yshift=-1cm]
  \node(H) at (0,0){};
  \node(B) at (0,-1.5){};
  \Edge[style={->>,>=to}](H)(B)
  \end{scope}
  \begin{scope}[yshift=-6cm]
  \foreach \x in {0,...,2}{
  	\foreach \y in {0,1}{
			\coordinate(\x\y) at (\x*\GraphUnit,\y*\GraphUnit){};
			\Vertex[Node]{\x\y}
 	 }
  }
  \foreach \x in {0,...,2}{
	 \Edges[label=$a$](\x0,\x1)
  }
  \foreach \x/\t in {1/0,2/1}{
			\Edge[label=$x$,style={->}](\x0)(\t0)
			\Edge[label=$x$,style={->}](\t1)(\x1)
  }
  \foreach \y in {0,1}{
	\node(D) at (2.8*\GraphUnit,\y*\GraphUnit){};
	\Edge[style={dotted}](2\y)(D)
  }
  \AddVertexColor{black}{00}
  \Edge[label=$x$,style={->,bend left=90}](00)(01)
  \end{scope}
\end{tikzpicture}
\caption{An $X$-covering between two Schreier graphs over $\presentation{x,a}{a^2}$. The root of the base graph and its two preimages are marked in black.}
\label{GraphesNonQITrois}
\end{figure}
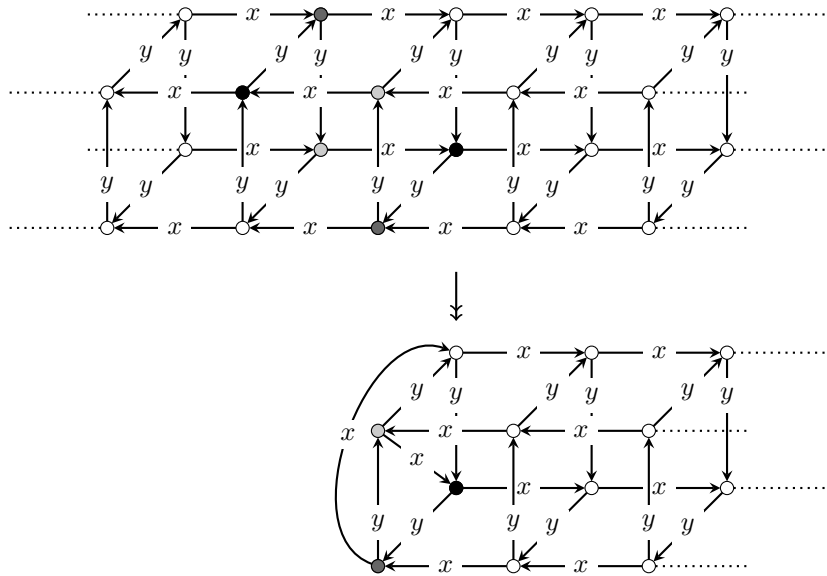
\begin{figure}[!h]
\centering
\begin{tikzpicture}[scale=0.6,>=stealth]
  \SetVertexNoLabel
  \SetGraphUnit{3}
  \tikzset{LabelStyle/.style ={fill=white}}
  \foreach \x in {-2,...,2}{
  	\foreach \y in {0,1}{
  		\foreach \z in {0,1}{
			\coordinate(\x\y\z) at (\x*\GraphUnit,\y*\GraphUnit,1.5*\z*\GraphUnit){};
			\Vertex[Node]{\x\y\z}
		}
 	 }
  }
  \foreach \x/\t in {-1/-2,0/-1,1/0,2/1}{
  	\Edge[label=$x$,style={->}](\t00)(\x00)
  }
  \foreach \x in {-2,...,2}{
  	 \Edges[label=$y$,style={->}](\x00,\x01)
	 \Edges[labelstyle={pos=0.3},label=$y$,style={->}](\x01,\x11)
	 \Edges[label=$y$,style={->}](\x11,\x10)
	 \Edges[labelstyle={pos=0.3},label=$y$,style={->}](\x10,\x00)
  }
  \foreach \x/\t in {-1/-2,0/-1,1/0,2/1}{
		\foreach \y in {0,1}{
			\Edge[label=$x$,style={->}](\x\y1)(\t\y1)
		}
		\Edge[label=$x$,style={->}](\t10)(\x10)
  }
  \foreach \y in {0,1}{
  	\foreach \z in {0,1}{
	\node(G) at (-2.8*\GraphUnit,\y*\GraphUnit,1.5*\z*\GraphUnit){};
	\Edge[style={dotted}](-2\y\z)(G)
	\node(D) at (2.8*\GraphUnit,\y*\GraphUnit,1.5*\z*\GraphUnit){};
	\Edge[style={dotted}](2\y\z)(D)
	}
  }
  \AddVertexColor{black}{-111}
  \AddVertexColor{black}{000}
  \AddVertexColor{black,opacity=0.6}{-110}
  \AddVertexColor{black,opacity=0.6}{001}
  \AddVertexColor{black,opacity=0.2}{-100}
  \AddVertexColor{black,opacity=0.2}{011}
  \begin{scope}[yshift=-2.5cm]
  \node(H) at (0,0){};
  \node(B) at (0,-1.5){};
  \Edge[style={->>,>=to}](H)(B)
  \end{scope}
  \begin{scope}[yshift=-7.5cm]
  \foreach \x in {0,...,2}{
  	\foreach \y in {0,1}{
  		\foreach \z in {0,1}{
			\coordinate(\x\y\z) at (\x*\GraphUnit,\y*\GraphUnit,1.5*\z*\GraphUnit){};
			\Vertex[Node]{\x\y\z}
		}
 	 }
  }
  \foreach \x/\t in {1/0,2/1}{
  	\Edge[label=$x$,style={->}](\t00)(\x00)
  }
  \foreach \x in {0,...,2}{
  	 \Edges[label=$y$,style={->}](\x00,\x01)
	 \Edges[labelstyle={pos=0.3},label=$y$,style={->}](\x01,\x11)
	 \Edges[label=$y$,style={->}](\x11,\x10)
	 \Edges[labelstyle={pos=0.3},label=$y$,style={->}](\x10,\x00)
  }
  \foreach \x/\t in {1/0,2/1}{
		\foreach \y in {0,1}{
			\Edge[label=$x$,style={->}](\x\y1)(\t\y1)
		}
		\Edge[label=$x$,style={->}](\t10)(\x10)
  }
  \foreach \y in {0,1}{
  	\foreach \z in {0,1}{
	\node(D) at (2.8*\GraphUnit,\y*\GraphUnit,1.5*\z*\GraphUnit){};
	\Edge[style={dotted}](2\y\z)(D)
	}
  }
  \AddVertexColor{black}{000}
  \AddVertexColor{black,opacity=0.6}{001}
  \AddVertexColor{black,opacity=0.2}{011}
  \Edge[label=$x$,style={->}](011)(000)
  \Edge[label=$x$,style={->,bend left=90}](001)(010)
  \end{scope}
\end{tikzpicture}
\caption{An $X$-covering between two Schreier graphs over the free group of rank two. The root of the base graph and its two preimages are marked in black.}
\label{GraphesNonQIDeux}
\end{figure}
\end{example}

Since people are usually interested in Schreier graphs over free groups and Cayley graphs without loops or multiple edges, it is natural to ask the following question.
\begin{question}
Is it possible to find $\group N\leq \group H\leq \group F_n$ such that $\group N$ has finite index in $\h$, the Schreier graphs $\Schrei{F_n}{N}{X}$ and $\Schrei{F_n}{H}{X}$ are both simple (without loop or multiple edges) and non quasi-isometric and such that at least one of $\group N$ or $\h$ is normal ?
\end{question}
The second part of Example \ref{exampleDeux} shows that this is possible if we replace normality by almost normality and Example \ref{exampleUn}  shows that this is possible if we do not ask the graphs to be simple.

\section{Application to groups}\label{sectionGroups}
We have seen that for a subgroup $\h$ of $\G$, length-transitivity is a weak version of normality.
More generally, for any group $\group A$ and subgroup $\h$, asking for the transitivity of $\Schrei{A}{H}{X}$ is a weak version of the normality.
Since the normality does not depend on the generating system, it is natural to ask if the same is true for the transitivity of the Schreier graph.
It turns out that this is not the case.
We will prove in the next proposition that for all subgroups, there exists a (big) generating system such that the corresponding Schreier graph is transitive.
Moreover, even if we restrict ourself to ``reasonable'' generating systems, the only subgroups such that all Schreier graphs are transitive are the normal subgroups (Proposition \ref{TransNotInvariant}).

For a group $\group A$, denote by $d(\group A)$ the number of elements of order $2$ plus half of the number of elements of order at least $3$.
\begin{proposition}\label{PropAllTransitive}
Let $\group A$ be a group and $\h$ any subgroup.
Then there exists a generating system $X$ of size $d(\group A)$ such that $\Schrei{A}{H}{X}$ is transitive.
\end{proposition}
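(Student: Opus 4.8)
The plan is to let $X$ be as large as possible. Concretely, I would take $X\subseteq\group A$ to consist of every element of order $2$ together with exactly one element out of each pair $\{a,a^{-1}\}$ of elements of order at least $3$. Then $\abs{X}=d(\group A)$ by the very definition of $d(\group A)$, and $X^\pm=X\cup X^{-1}=\group A\setminus\{1\}$; in particular every non-identity element of $\group A$ lies in $X$ or has its inverse in $X$, so $X$ generates $\group A$ and is a legitimate generating system. It then remains to show that $\Gamma\defegal\Schrei{A}{H}{X}$ is transitive, and the idea is that with this many generators $\Gamma$ is, up to loops, a complete multigraph on the coset set.

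The key step is to record two homogeneity facts about $\Gamma$. First, for any two \emph{distinct} vertices $\h g$ and $\h g'$, the unoriented edges joining them are in bijection with $\enstq{x\in X^\pm}{\h gx=\h g'}=g^{-1}\h g'$ (the identity does not occur precisely because $\h g\neq\h g'$, so this set is indeed contained in $X^\pm=\group A\setminus\{1\}$); hence there are exactly $\abs{\h}$ of them, a number independent of $g$ and $g'$. Second, the loops at a vertex $\h g$ correspond to the non-identity elements of $g^{-1}\h g$: a (degenerate) loop for each element of order $2$ and one non-degenerate loop for each pair $\{x,x^{-1}\}$ of elements of order at least $3$. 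Since conjugation by $g$ is an isomorphism $\h\to g^{-1}\h g$ preserving the order of each element, the number of loops of each kind at $\h g$ depends only on $\h$, not on $g$.

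Given these two facts, the combinatorial type of $\Gamma$ is invariant under every permutation of its vertex set: a permutation $\sigma$ of the cosets can be promoted to a graph automorphism by choosing, for each pair of distinct vertices, a bijection between the two equinumerous sets of joining edges, and, for each vertex, a bijection between the loops at it and the loops at its image. This exhibits the full symmetric group on the vertices acting on $\Gamma$ by automorphisms, so $\Gamma$ is in particular vertex-transitive; note that only transitivity (not $X$-transitivity) is claimed, so we may disregard the labeling in this last step. The one point requiring care is the bookkeeping of loops, and in particular the convention for a loop labeled by a generator of order $2$ (whether $\bar e=e$); but whatever the convention, the count of loops of each type at $\h g$ is governed solely by the isomorphism type of $g^{-1}\h g\iso\h$, hence is the same at every vertex, and the argument goes through. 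Finally, everything above is a statement about cardinalities, so it holds verbatim when $\group A$, and hence $d(\group A)$ and $\abs{\h}$, is infinite.
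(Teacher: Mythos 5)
Your proof is correct and follows essentially the same route as the paper: the same generating system $X$ with $X^\pm=\group A\setminus\{1\}$, followed by the observation that the resulting Schreier graph is a thick complete graph with uniform edge multiplicities $\abs{\h}$ between distinct cosets, hence admits the full symmetric group of its vertex set as automorphisms. Your accounting of the loops at each vertex (degenerate versus non-degenerate, governed by the isomorphism type of $g^{-1}\h g\iso\h$) is in fact more careful than the paper's, which disposes of the loop issue in one sentence.
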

\begin{proof}
Any group $\group A$ can be decomposed as $\group A=\{1\}\sqcup S\sqcup T\sqcup T^-$, where $S$ consists of elements of order $2$, $T$ of half of elements of order at least $3$ and $T^-$ of inverses of elements in $T$.
Let $X\defegal S\cup T$.
Then, $X^\pm=\group A\setminus\{1\}$ and $\abs X=d(\group A)$.

Let $\group A=\bigsqcup Hg_i$ be the decomposition into right cosets.
For each $i$ and $j$, there is an edge labeled by $g$ from $Hg_i$ to $Hg_j$ if and only if $g\in g_i^{-1}Hg_j$.
Since $\abs{g_i^{-1}Hg_j}=\abs H$, for any two vertices in $\schrei(\group A,\h,\group A)$, there is exactly $\abs H$ edges going from $v$ to $w$ and this graph is transitive (it is a thick complete graph).
The edges labeled by $1$ being always loop, the graph $\Schrei{A}{H}{X}$ is also transitive.
\end{proof}

We now prove that even if we restrict ourself to generating systems of size at most $\rank(\group A)+1$, the fact that $\Schrei{A}{H}{X}$ is transitive does depend on $X$ if $\h$ is not normal.
\begin{lemma}
Let $\group A$ be a group (not necessarily finitely generated) and let $\h$ be a proper subgroup.
Then there exists a generating system $X$ of $\group A$ such that $X\cap \h$ is empty and $\abs X= \rank(\group A)$.
\end{lemma}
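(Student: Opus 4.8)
The plan is to start from any generating system realizing the rank and to \emph{push} the offending generators off $\h$ by a single harmless multiplication.

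First, by the definition of $\rank(\group A)$ as the least cardinality of a generating system (a minimum that is attained, since cardinals are well\tiret ordered), there is a generating system $Y=\{y_i\}_{i\in I}$ of $\group A$ with $\abs I=\rank(\group A)$. Since $\h$ is proper we cannot have $Y\subseteq\h$ (otherwise $\group A=\langle Y\rangle\subseteq\h$), so we may fix an index $j_0\in I$ with $y_{j_0}\notin\h$. Call an index $i$ \emph{good} if $y_i\notin\h$ and \emph{bad} otherwise; note that $j_0$ is good.

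Now define the multiset $X=\{x_i\}_{i\in I}$ by $x_i\defegal y_i$ when $i$ is good and $x_i\defegal y_iy_{j_0}$ when $i$ is bad. Three things must be checked. First, $\abs X=\abs I=\rank(\group A)$: this is immediate because $X$ is indexed by $I$, and here we use that a generating system is allowed to be a multiset, so possible coincidences among its elements do not shrink its cardinality. Second, $X\cap\h=\emptyset$: for good $i$ this is the definition of ``good''; for bad $i$, if $x_i=y_iy_{j_0}\in\h$ then $y_{j_0}=y_i^{-1}x_i\in\h$ since $y_i\in\h$, contradicting the choice of $j_0$. Third, $\langle X\rangle=\group A$: we have $x_{j_0}=y_{j_0}\in\langle X\rangle$, hence for every bad $i$ also $y_i=x_i x_{j_0}^{-1}\in\langle X\rangle$, and for every good $i$ trivially $y_i=x_i\in\langle X\rangle$; thus $Y\subseteq\langle X\rangle$ and therefore $\group A=\langle Y\rangle\subseteq\langle X\rangle\subseteq\group A$. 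This gives the desired generating system.

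There is essentially no serious obstacle here; the only points requiring a little care are the bookkeeping that $\abs X$ stays exactly $\rank(\group A)$ (handled by allowing $X$ to be a multiset and by the fact that the transformation $y_i\mapsto y_iy_{j_0}$ is injective on the bad indices) and the degenerate cases. If $\rank(\group A)\le 1$ every generator already lies outside $\h$ --- again because $\h$ is proper --- so one may simply take $X=Y$; and $\rank(\group A)=0$ cannot occur, since then $\group A$, and hence $\h$, would be trivial, contradicting properness. Hence the construction produces the required $X$ in all cases.
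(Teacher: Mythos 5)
Your proposal is correct and follows essentially the same route as the paper: the paper also fixes a generator $x_0\notin\h$ and replaces each generator lying in $\h$ by its right translate by $x_0$, then verifies non-membership and generation exactly as you do. Your write-up is slightly more careful about the multiset bookkeeping and the degenerate cases, but the underlying argument is identical.
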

\begin{proof}
Let $X$ be a generating system of $\G$ such that $\abs X=\rank(\group A)$.
If $X\cap \h$ is empty, the assertion is true.
Therefore, we can suppose that $X\cap \h$ is not empty.
Since $\h$ is a proper subgroup, we also have $X\cap \h\neq X$.
Thus, we can order the elements of $X$ and find an $x_0\in X$ such that $x\in X$ belongs to $\h$ if and only if $x< x_0$.
Now, take $Y\defegal\enstq{x}{x\in X,x\geq x_0}\sqcup\enstq{xx_0}{x\in X, x<x_0}$.
This is trivially a generating system of the same cardinality as $X$, and $Y\cap \h$ is empty.
Indeed, if $x\geq x_0$ then $x\notin\h$.
But if $x<x_0$ and $xx_0$ belongs to $\h$, we have $x_0=x^{-1}xx_0\in\h$, which is absurd.
\end{proof}
\begin{proposition}\label{TransNotInvariant}
Let $\group A$ be a group and $\h$ a subgroup of $\group A$.
If, for all generating systems $X$ of size at most $\rank(\group A)+1$, the graph $\Schrei{A}{H}{X}$ is transitive, then $\h$ is a normal subgroup of $\group A$.
\end{proposition}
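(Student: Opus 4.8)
The plan is to argue by contraposition: assuming $\h$ is not normal in $\group A$ --- hence a proper subgroup --- I would exhibit a generating system $X$ with $\abs{X}\leq\rank(\group A)+1$ for which $\Schrei{A}{H}{X}$ fails to be vertex-transitive. The invariant I would use to certify non-transitivity is the crudest available, the presence of a loop at a vertex: any graph automorphism carries an edge $e$ with $\iota(e)=\tau(e)$ to another such edge, so a vertex incident to a loop can only be mapped to a vertex incident to a loop. In a Schreier graph the vertex $\h g$ is incident to a loop labeled $x$ precisely when $gxg^{-1}\in\h$; thus the root $\h$ is incident to a loop iff $X\cap\h\neq\emptyset$, while $\h g$ is incident to no loop iff $X\cap g^{-1}\h g=\emptyset$. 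So it would suffice to build a generating system $X$ of the allowed size that meets $\h$ but misses some conjugate $a^{-1}\h a$.

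First I would unpack non-normality. If $g^{-1}\h g\neq\h$ for some $g$, then either $g^{-1}\h g\not\subseteq\h$, which gives $h_0\in\h$ with $g^{-1}h_0g\notin\h$ (set $a\defegal g^{-1}$), or else $g^{-1}\h g\subsetneq\h$, so $\h\not\subseteq g^{-1}\h g$ and there is $h_0\in\h$ with $gh_0g^{-1}\notin\h$ (set $a\defegal g$). Either way I obtain $a\in\group A$ and $h_0\in\h\setminus\{1\}$ with $ah_0a^{-1}\notin\h$, equivalently $h_0\notin a^{-1}\h a$. Since $\h$ is proper so is $a^{-1}\h a$, so the preceding lemma supplies a generating system $Y$ of $\group A$ with $\abs{Y}=\rank(\group A)$ and $Y\cap a^{-1}\h a=\emptyset$.

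I would then split into two cases. If $Y$ already meets $\h$, take $X\defegal Y$: the root is incident to a loop, $\h a$ is not, and $\abs{X}=\rank(\group A)$. If $Y\cap\h=\emptyset$, take $X\defegal Y\cup\{h_0\}$; then $h_0\notin Y$, so $\abs{X}=\rank(\group A)+1$, and $X$ still generates $\group A$. Here $X\cap\h=\{h_0\}\neq\emptyset$, so the root is incident to a loop, whereas $X\cap a^{-1}\h a=(Y\cap a^{-1}\h a)\cup(\{h_0\}\cap a^{-1}\h a)=\emptyset$ because $h_0\notin a^{-1}\h a$, so $\h a$ is incident to no loop. In both cases $\Schrei{A}{H}{X}$ has a loop at its root but a loopless vertex elsewhere, hence is not vertex-transitive, which is the contrapositive.

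I do not expect a serious obstacle here; the one delicate point --- and, incidentally, the reason the ``$+1$'' is genuinely needed --- is that the rank-sized generating system handed back by the lemma may avoid $\h$ entirely and so fail to witness non-normality on its own, and one must then be able to throw in a single element of $\h$ without inadvertently creating a loop at $\h a$. That this is possible is exactly what the choice $ah_0a^{-1}\notin\h$ guarantees.
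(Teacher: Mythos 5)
Your proof is correct. It rests on the same two ingredients as the paper's: the preceding lemma, which produces a rank-sized generating system disjoint from any prescribed proper subgroup, and the observation that $\h g$ carries a loop labeled $x$ exactly when $x\in g^{-1}\h g$, so that incidence to a loop is an automorphism-invariant that can separate vertices. The deployment, however, is mirror-image. The paper argues directly: it applies the lemma to $\h$ itself to obtain $X$ with $X\cap\h=\emptyset$, notes that the assumed transitivity of $\Schrei{A}{H}{X}$ forces that graph to be loop-free everywhere, and then, for \emph{each} $h\in\h$, adjoins $h$ to $X$ and uses the assumed transitivity of $\Schrei{A}{H}{X_h}$ to propagate the unique loop at the root to every vertex, reading off $ghg^{-1}\in\h$ for all $g$ and all $h$, i.e.\ normality itself. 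You argue the contrapositive: starting from a witness $a h_0 a^{-1}\notin\h$ of non-normality, you apply the lemma to the conjugate $a^{-1}\h a$ rather than to $\h$, and assemble a single generating system of size $\rank(\group A)$ or $\rank(\group A)+1$ that meets $\h$ but misses $a^{-1}\h a$, so the root has a loop while $\h a$ does not. Your version invokes the hypothesis for only one generating system instead of the whole family $\{X\}\cup\{X_h\}_{h\in\h}$, and it makes explicit why the ``$+1$'' in the size bound is genuinely needed; the paper's version has the mild advantage of extracting the normality condition directly rather than deriving a contradiction. (Your two-case unpacking of non-normality is more elaborate than necessary --- non-normality is already equivalent to the existence of $a$ and $h_0\in\h$ with $ah_0a^{-1}\notin\h$ --- but it is correct.) Both arguments are complete.
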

\begin{proof}
If $\h=\group A$, there is nothing to prove.
Therefore, we can suppose that $\h$ is a proper subgroup and find, by the preceding lemma, a generating system $X$ such that $\abs X=\rank(\group A)$ and $X\cap\h=\emptyset$.
The Schreier graph $\Schrei{A}{H}{X}$ is transitive by assumption and does not have loops since $X\cap\h=\emptyset$.
For any $h\in \h$, let $X_h\defegal X\sqcup \{h\}$; a generating system of size $\rank(\group A)+1$.
The graph $\Schrei{A}{H}{X_h}$ is transitive and has a unique loop (labeled by $h$) at the vertex $\h$.
Therefore, for all $g\in \group A$, the vertex $\h g$ as a unique loop.
The label of this loop is $h$ since the graph $\Schrei{A}{H}{X}$ has no loops at the vertex $\h g$.
But this implies that for all $g\in \group A$, $\h gh=\h g$.
Therefore, for all $h\in\h$ and $g\in\group A$, $ghg^{-1}$ belongs to $\h$ and we have just proven that $\h$ is normal.
\end{proof}

In the following, we will only take in account locally finite graphs and finite generating systems of groups.
This is justified by the fact that if $\group A$ is not finitely generated, then $\abs{\group A}=d(\group A)=\rank(\group A)$, but has other important consequences for the study of Schreier graphs.
For example, every connected, locally finite transitive graph is a Schreier graph, see Section \ref{sectionSchreier}.

Due to Proposition \ref{TransNotInvariant}, we know that the transitivity of $\Schrei{A}{H}{X}$ does not only depends on $\h$, but on $X$ too if $\h$ is non-normal.
This and Proposition \ref{PropAllTransitive} motivate the following definition.
\begin{definition}
A finitely generated $\group A$ is \defi{strongly simple} if for any generating system $X$ of size at most $\rank(\group A)+1$, and any proper subgroup $\{1\}<\h<\group A$, the graph $\Schrei{A}{H}{X}$ is not transitive.
\end{definition}

It is immediate that strong simplicity implies simplicity and that cyclic groups of prime order $\group C_p$ are strongly simple.
Indeed, such groups do not have proper subgroups.

Proposition \ref{TarskiStronglySimple} shows the existence of infinite strongly simple groups, proving that the class of strong simple groups is not reduced to cyclic groups.
On the other side, the following proposition show that there exists (finite) simple groups which are not strongly simple.

\begin{proposition}\label{ExAn}
For odd $n\geq 7$, let $\h_n$ be the subgroup of $\group A_n$ consisting of elements fixing $n$ and let $a_n\defegal (1,3,4,5,\dots n,2)$ and $b_n\defegal (2,4,6,\dots n-1,1,n,n-2,\dots,5,3)$.
Then $\{a_n,b_n\}$ generates $\group A_n$ and the graph $\Schrei{A_n}{H_n}{\{a_n, b_n\}}$ is transitive. 

In particular, $A_n$ is simple but not strongly simple.
\end{proposition}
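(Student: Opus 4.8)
The plan is to identify the Schreier graph concretely — it turns out to be a circulant graph, hence transitive — and to handle the generation claim by a primitivity argument. Via the natural action of $\group A_n$ on $\{1,\dots,n\}$ the subgroup $\h_n=\stab(n)$ gives a bijection between the right cosets $\h_n g$ and the points $g^{-1}(n)\in\{1,\dots,n\}$, and under this identification the edges coming from a generator $x$ (in either orientation) are exactly the pairs $\{i,x(i)\}$. Forgetting labels, $\Schrei{A_n}{H_n}{\{a_n,b_n\}}$ is therefore the graph on $\{1,\dots,n\}$ whose edge set is $C_a\cup C_b$, where $C_a=\{\{i,a_n(i)\}\}$ and $C_b=\{\{i,b_n(i)\}\}$ are the two Hamilton cycles carried by $a_n$ and $b_n$.

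Then comes the one genuine computation. Identify $\{1,\dots,n\}$ with $\Z/n\Z$, with $n$ playing the role of $0$, and classify edges by the size of their step. Reading off $a_n=(1,3,4,\dots,n,2)$ one sees that $C_a$ consists of all step-$1$ edges \emph{except} $\{n,1\}$ and $\{2,3\}$, together with precisely the two step-$2$ edges $\{1,3\}$ and $\{n,2\}$; dually, reading off $b_n=(2,4,\dots,n-1,1,n,n-2,\dots,5,3)$ one sees that $C_b$ consists of exactly the two step-$1$ edges $\{n,1\},\{2,3\}$ together with all step-$2$ edges other than $\{1,3\}$ and $\{n,2\}$. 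Hence $C_a$ and $C_b$ are edge-disjoint and $C_a\cup C_b$ is precisely the circulant graph on $\Z/n\Z$ with connection set $\{\pm1,\pm2\}$; circulant graphs are vertex-transitive, so $\Schrei{A_n}{H_n}{\{a_n,b_n\}}$ is transitive. (Note that $\h_n$ is not normal in the simple group $\group A_n$, so this graph is not $X$-transitive — only the weaker, label-free transitivity holds, which is exactly the phenomenon the proposition is built on.)

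For the generation statement, $a_n$ and $b_n$ are $n$-cycles, hence even since $n$ is odd, so $G\defegal\langle a_n,b_n\rangle\le\group A_n$, and $G$ is transitive because it contains the $n$-cycle $a_n$. By a classical theorem of Jordan, a primitive permutation group of degree $n$ containing a $3$-cycle contains $\group A_n$, so it suffices to produce a $3$-cycle in $G$ and to show $G$ is primitive. A $3$-cycle is obtained from a short explicit word: one computes two commutator-type elements of bounded support whose product collapses to a single $3$-cycle (for $n=7$ one checks directly that $[a_n,b_n]$ and $a_n^{-1}b_na_nb_n^{-1}$ are each a product of two $3$-cycles sharing a common factor, so their quotient is a $3$-cycle; the same pattern works for larger $n$). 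For primitivity, conjugate so that $a_n$ becomes the standard cycle $(1\,2\,\cdots\,n)$; every nontrivial proper block system invariant under $\langle a_n\rangle$ then consists of the residue classes modulo $m$ for some $m\mid n$ with $1<m<n$, and one checks that the image of $b_n$ in these coordinates sends some block onto a set meeting two distinct blocks, ruling out all such systems. Thus $G$ is primitive and $G=\group A_n$.

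Finally, $\group A_n$ is simple, $\h_n\iso\group A_{n-1}$ is a proper nontrivial subgroup, $\{a_n,b_n\}$ generates $\group A_n$ and has size $2\le\rank(\group A_n)+1$, and the Schreier graph it defines is transitive; by definition this means $\group A_n$ is simple but not strongly simple. The edge bookkeeping of the second paragraph is routine; the delicate point is the generation argument — making the choice of the $3$-cycle and, above all, the block-system elimination uniform over all (in particular all composite) odd $n\ge7$ — which is where I would expect the bulk of the effort to go.
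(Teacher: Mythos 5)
Your identification of the Schreier graph is correct and matches the paper: the cosets of $\h_n$ biject with $\{1,\dots,n\}$, the two Hamilton cycles carried by $a_n$ and $b_n$ are edge-disjoint, and their union is the circulant graph with connection set $\{\pm1,\pm2\}$, hence transitive. Your step-by-step edge bookkeeping is in fact more explicit than the paper's, and it checks out. The non-normality remark is also consistent with the paper's framing.

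The generation argument, however, has a genuine gap, and it is exactly where you yourself say the bulk of the effort would go. You reduce to Jordan's theorem, which requires (i) exhibiting a $3$-cycle in $G=\langle a_n,b_n\rangle$ and (ii) proving $G$ primitive; but you carry out neither. For (i) you assert that a commutator-type word ``collapses to a single $3$-cycle'' and that ``the same pattern works for larger $n$'' without specifying the word or verifying the collapse for general odd $n$ --- note that $a_n$ and $b_n$ have full support, so there is nothing automatic about a short word in them having support of size $3$. For (ii) you assert that ``one checks'' that $b_n$ breaks every block system of residue classes modulo a divisor $m$ of $n$; this is precisely the part that must be made uniform over all composite odd $n$, and no computation is given. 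The paper avoids both issues with a single explicit identity, $b_na_n^{-2}b_n^{-1}a_n^2=(4,3,n-1)$, and then invokes Piccard's theorem that this $3$-cycle together with the $n$-cycle $a_n$ generates $\group A_n$ for odd $n\ge 5$ --- no primitivity argument is needed. To complete your proof you would either have to verify your two ``one checks'' claims for all odd $n\ge 7$, or simply substitute the paper's explicit word and citation, which is the shorter path.
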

\begin{proof}
We have that $\h_n$ is isomorphic to $\group A_{n-1}$ and has index $n$.
Moreover, the right cosets for $\h_n$ depend only on the preimage of $n$.
Therefore, the action of $\group A_n$ on $\group A_n/\h_n$ is isomorphic to the action of $\group A_n$ on $\{1,\dots, n\}$.
It is then easy to see that $\Schrei{A_n}{H_n}{\{a_n, b_n\}}$ is isomorphic to the circulent graph $C_n^{1,2}$ (see Figure \ref{AnNotStronglyTrans} for an example): each vertex $i$ has $4$ neighborhood: $i\pm 1, i\pm2$.
Such a graph is obviously transitive.

\begin{figure}
\centering
\begin{tikzpicture}[>=stealth,a/.style={->,densely dotted},b/.style={->}]
  \SetVertexNoLabel
  \SetGraphUnit{2}
  \Vertices{circle}{1,2,3,4,5,6,7}
  \Edges[style={a,bend right=10}](3,4,5,6,7)
  \Edges[style={a,bend left=10},label=$a_7$](2,1)
  \Edges[style={a}](1,3)
  \Edges[style={a}](7,2)
  \Edges[style={b}](7,5,3)
  \Edges[style={b}](2,4,6,1)
  \Edges[style={b,bend left=10}](1,7)
  \Edges[style={b,bend left=10}, label=$b_7$](3,2)
  \AddVertexColor{black}{1}
\end{tikzpicture}
\caption{The graph $\Schrei{A_7}{H_7}{\{a_7, b_7\}}$.The root (the vertex $\h_n$) is marked in black.}
\label{AnNotStronglyTrans}
\end{figure}

In order to finish the proof, we need to show that $a_n$ and $b_n$ generate $\group A_n$. For $n\geq 7$, a direct computation gives $b_na_n^{-2}b_n^{-1}a_n^2=(4,3,n-1)$.
We conclude using the fact that $(4,3,n-1)$ and $a_n$ generates $\group A_n$ for odd $n\geq 5$ (see \cite{alternating}).

\end{proof}
The above proof does not work in the case where $n=5$ or $n$ is even. For $n=5$, the graph is still transitive, but $b_5^2=a_5$ and therefore $\{a_5, b_5\}$ does not generate $\group A_5$.
A careful check shows that if $\Schrei{A_5}{H_5}{X}$ is transitive, then $X$ has at least $3$ elements and $3$ is possible (take $a_5$, $b_5$ and $(1,2,3,4,5)$).
For $n=6$, we even have that if $\Schrei{A_5}{H_5}{X}$ is transitive, then $X$ has at least $4$ elements.
More generally, for even $n$, let $c_i=(1,2,3,\dots ,\hat i,\dots,n)$ (the cycle $(1,2,\dots, n)$ without $i$).
Then the $c_i$'s generate $\group A_n$ and $\Schrei{A_n}{H_n}{\{c_i\}}$ is transitive. In this case, we have a generating set of size $n$, which is small if we compare it to $d(\group A_n)> \frac{n!}4$ but big if we compare it to $\rank(\group A_n)=2$.

We now turn our attention on the cyclic subgroups of prime order.
\begin{proposition}
Let $\group A$ be finitely generated group, $X$ a finite generating system, and $\group K$ a cyclic subgroup of prime order.
Then, in the graph $\Schrei{A}{K}{X}$, each orbit is a finite union of $X$-orbits.
\end{proposition}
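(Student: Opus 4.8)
The plan is to realise $\Gamma\defegal\Schrei{A}{K}{X}$ over a group of the form \eqref{group}. Write $\group A=\G/\group N$ with $\group N$ normal in $\G$ and let $\h\leq\G$ be the preimage of $\group K$; then $\group N\leq\h$, the quotient $\h/\group N=\group K$ has prime order $p$, and $\Gamma\iso\Schrei{G}{H}{X}$ with the same automorphism group. Every orbit is the $\Aut$-orbit of some vertex $\h g_0$, so I fix one such vertex. By the discussion following Corollary \ref{CorTransitivity}, two vertices $\h f$ and $\h g$ lie in the same $X$-orbit exactly when $f^{-1}\h f=g^{-1}\h g$ in $\G$; and by Theorem \ref{KeyTheorem} together with that same discussion (applied to $\Gamma$ re-rooted at $\h g_0$, which is $\Schrei{G}{g_0^{-1}\h g_0}{X}$), the vertex $\h g_1$ lies in the $\Aut$-orbit of $\h g_0$ exactly when $\h_0\defegal g_0^{-1}\h g_0$ and $\h_1\defegal g_1^{-1}\h g_1$ are length-isomorphic over $\G$. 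Hence the number of $X$-orbits contained in the orbit of $\h g_0$ equals the number of \emph{distinct} conjugates $\h_1$ of $\h$ that are length-isomorphic to $\h_0$, and the task reduces to showing this number is finite.

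The main step will be a length bound. Since $\group N$ is normal in $\G$, every conjugate $\h_1$ of $\h$ contains $\group N$, equals $\pi^{-1}(g_1^{-1}\group K g_1)$ for the projection $\pi\colon\G\to\group A$, and satisfies $[\h_1:\group N]=p$; in particular $\h_1$ is generated by $\group N$ together with any single element of $\h_1\setminus\group N$, and $g_1^{-1}\group K g_1=\h_1/\group N$, being of prime order, is generated by any of its non-trivial elements. Set $\nu_0\defegal\min\enstq{\abs h_X}{h\in\h_0\setminus\group N}$, which is finite because $p\geq2$. If $\h_1$ is length-isomorphic to $\h_0$, a length-preserving bijection gives $\abs{\enstq{h\in\h_0}{\abs h_X\leq r}}=\abs{\enstq{h\in\h_1}{\abs h_X\leq r}}$ for all $r$; subtracting the (finite, since $X$ is finite) quantity $\abs{\enstq{h\in\group N}{\abs h_X\leq r}}$ from both sides shows the same equality holds for $\h_0\setminus\group N$ and $\h_1\setminus\group N$. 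At $r=\nu_0$ the left-hand side is positive, so $\h_1$ contains some $h_1\notin\group N$ with $\abs{h_1}_X\leq\nu_0$; then $\pi(h_1)$ generates $g_1^{-1}\group K g_1$ and has length at most $\nu_0$ in $\group A$. Hence every conjugate of $\group K$ occurring in the orbit is of the form $\langle v\rangle$ with $\abs v_X\leq\nu_0$ in $\group A$; as $X$ is finite there are only finitely many such $v$, hence finitely many such subgroups, hence finitely many $\h_1=\pi^{-1}(g_1^{-1}\group K g_1)$, hence finitely many $X$-orbits inside the orbit.

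The delicate point, and the step I expect to be the main obstacle, is that a length-isomorphism $\h_0\to\h_1$ need \emph{not} send $\group N$ to $\group N$: the subgroup $\group N$ is generally not characteristic in $\h_0$ (for instance when $\h_0$ is free of infinite rank), so one cannot simply transport the shortest element of $\h_0\setminus\group N$ through the isomorphism. The counting argument above is designed precisely to avoid this: it uses only that $\group N$ lies inside every conjugate of $\h$ with the \emph{same} contribution to the length profile, which is enough to match the minimal length of an element outside $\group N$ on the two sides. The case $\group N=\{1\}$ — i.e.\ $\group A$ already of the form \eqref{group} — is covered without change, and orbits that happen to be finite as vertex sets are trivially finite unions of $X$-orbits, so no separate treatment is needed.
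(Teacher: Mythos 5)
Your proof is correct and takes essentially the same approach as the paper's: pass to $\group A=\G/\group N$, observe that every conjugate of $\h$ occurring in the orbit equals $\group N\langle h'\rangle$ for some $h'$ of bounded length, and conclude by finiteness of balls in the finitely generated $\G$. The one point where you go beyond the paper is the counting argument showing that a length-isomorphism $\h_0\to\h_1$ forces $\h_1\setminus\group N$ to contain an element of length at most $\nu_0$ even though it need not carry $\group N$ to $\group N$; the paper simply asserts the existence of such an element of minimal length, and your argument is the natural way to justify that assertion.
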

\begin{proof}
We have $\group A=\group G/\group N$, with $\group G$ finitely generated.
The subgroup $\group K$ corresponds to a subgroup $\h$ of $\group G$ containing $\group N$.
For any vertex $\h g$ in the graph $\Schrei{G}{H}{X}\iso\Schrei{A}{K}{X}$, its orbit $[\h g]$ is the set of all vertices $\h f$ such that there exists an automorphism mapping $\h g$ to $\h f$.
Since $\group K$ is cyclic of prime order, we have $g^{-1}\h g=\group N\langle h\rangle$ for every $h$ in $g^{-1}\h g-\group N$.
We can choose $h$ to have minimal length among elements of $g^{-1}\h g-\group N$.
For any vertex $\h f$ in $[\h g]$, there exists a bijection from $g^{-1}\h g$ to $f^{-1}\h f$ which preserves lengths.
Hence, there exists $h'$ in $f^{-1}\h f-\group N$ which has same length as $h$; and we have $f^{-1}\h f=\group N\langle h'\rangle$.
Since $\group G$ is finitely generated, its set of elements of length $\abs h$ is finite.
Thus, we have that the set of subgroups $\enstq{f^{-1}\h f}{\h f\in[\h g]}$ is finite.
We conclude the proof using the fact that the $X$-orbit of $\h g$ consists exactly of vertices $\h f$ such that $f^{-1}\h f=g^{-1}\h g$.
\end{proof}
\begin{corollary}
Let $\group A$ be a infinite simple group, $X$ a finite generating system, $\group K$ a proper non-trivial subgroup and $\Gamma\defegal\Schrei{A}{K}{X}$.
Then $\Aut_X(\Gamma)$ has an infinite number of orbits.
Moreover, if $\group K$ is cyclic of prime order, then $\Aut(\Gamma)$ has an infinite number of orbits and therefore, $\Gamma$ is not almost transitive.
\end{corollary}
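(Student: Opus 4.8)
The plan is to reduce both assertions to counting orbits and to the observation that there are infinitely many of them. For the first assertion I would apply the corollary recalled above, according to which the number of $X$-orbits of $\Gamma=\Schrei{A}{K}{X}$ equals $[\group A:N_{\group A}(\group K)]$; recall that $X$-orbits are precisely the orbits of $\Aut_X(\Gamma)$. Since $\group A$ is simple and $\group K$ is a proper non-trivial subgroup, $\group K$ is not normal, so $N_{\group A}(\group K)$ is a proper subgroup of $\group A$. The point is then that a proper subgroup $M$ of an infinite simple group has infinite index: otherwise the action of $\group A$ on the finite coset space $\group A/M$ would give a homomorphism $\group A\to S_{[\group A:M]}$ whose kernel is a proper normal subgroup of $\group A$, hence trivial by simplicity, so $\group A$ would embed in a finite symmetric group, a contradiction. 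Applying this with $M=N_{\group A}(\group K)$ gives $[\group A:N_{\group A}(\group K)]=\infty$, so $\Aut_X(\Gamma)$ has infinitely many orbits.

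For the second assertion, suppose $\group K$ is cyclic of prime order. By the preceding proposition, each $\Aut(\Gamma)$-orbit of $\Gamma$ is a finite union of $X$-orbits; since by the first part there are infinitely many $X$-orbits, there must be infinitely many $\Aut(\Gamma)$-orbits. Finally, almost transitivity of $\Gamma$ would provide a finite set $V_0$ of vertices meeting every $\Aut(\Gamma)$-orbit, so the number of orbits would be at most $\abs{V_0}$; as it is infinite, $\Gamma$ is not almost transitive.

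I do not anticipate any real obstacle here: the only step carrying genuine content is the permutation-representation argument that proper subgroups of infinite simple groups have infinite index, together with the invocation of the preceding proposition to pass from ``finitely many $X$-orbits inside each orbit'' to ``infinitely many orbits''. Everything else is just unwinding the definitions of $X$-transitivity, orbits and almost transitivity from Sections \ref{SectNotations} and \ref{sectionGroups}.
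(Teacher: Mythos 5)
Your proposal is correct and follows essentially the same route as the paper: it invokes the earlier corollary identifying the number of $X$-orbits with $[\group A:N_{\group A}(\group K)]$, notes this index is infinite because an infinite simple group has no proper subgroup of finite index, and then uses the preceding proposition (each orbit is a finite union of $X$-orbits) to conclude. The only difference is that you spell out the permutation-representation argument for why proper subgroups of infinite simple groups have infinite index, a standard fact the paper uses without proof.
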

\begin{proof}
Since $\group A$ is infinite simple, it does not have any finite index subgroups.
Therefore, the number of $X$-orbits, which is $[\group A: N_\group A(\group K)]$, is infinite.
The last proposition implies that if $\group K$ is cyclic of prime order, then the number of orbits is also infinite.
\end{proof}

Recall that a \defi{Tarski monster $\group T_p$} is an infinite group such that every proper subgroup is isomorphic to a cyclic group of order $p$, for $p$ a fixed prime.
It follows directly from the definition that every such group has rank $2$ and is simple.
Ol'shanskii proved in \cite{Ol} that there exists uncountably many non-isomorphic Tarski monsters for every $p$ greater that $10^{75}$.
\begin{proposition}\label{TarskiStronglySimple}
Let $\group T_p$ be a Tarski monster, $X$ a finite generating system, $\group K$ a proper non-trivial subgroup and $\Gamma$ the corresponding Schreier graph.
Then $\Aut_X(\Gamma)=\{1\}$ and each orbit of $\Aut(\Gamma)$ is finite.
In particular, $\Gamma$ is not almost transitive
 and Tarski monsters are strongly simple.
\end{proposition}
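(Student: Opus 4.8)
The plan is to derive everything from the self-normalizing behaviour of proper non-trivial subgroups of $\group T_p$ together with two results proved above: the earlier count showing that $\Schrei{A}{K}{X}$ has $[\group A:N_{\group A}(\group K)]$ $X$-orbits, each of cardinality $[N_{\group A}(\group K):\group K]$, and the proposition (for $\group K$ cyclic of prime order) stating that every orbit of $\Aut(\Gamma)$ is a finite union of $X$-orbits.

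First I would record the group theory. A proper non-trivial subgroup $\group K<\group T_p$ is isomorphic to $\group C_p$, and it is self-normalizing: since $N_{\group T_p}(\group K)\supseteq\group K$, if this inclusion were strict then $N_{\group T_p}(\group K)$ would have more than $p$ elements and hence could not be a proper subgroup, so it would be all of $\group T_p$; but then $\group K$ would be a proper non-trivial normal subgroup, contradicting the simplicity of $\group T_p$. The very same argument applies to each conjugate $g^{-1}\group K g$, which is again a proper non-trivial --- hence cyclic of order $p$ --- subgroup, so $N_{\group T_p}(g^{-1}\group K g)=g^{-1}\group K g$ for all $g$. Applying the $X$-orbit count to the root shows that its $X$-orbit has $[N_{\group T_p}(\group K):\group K]=1$ element; thus every $X$-automorphism of $\Gamma$ fixes the root, and since $\Gamma$ is a Schreier graph (at most one edge of a given label leaves each vertex) the only root-fixing $X$-automorphism is the identity, so $\Aut_X(\Gamma)=\{1\}$. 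Applying the same count to an arbitrary vertex $\group K g$ shows its $X$-orbit is $\{\group K g\}$, i.e. all $X$-orbits are singletons; by the proposition on cyclic prime-order subgroups, each $\Aut(\Gamma)$-orbit is then a finite union of singletons, hence finite.

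It remains to read off the consequences. As $\group T_p$ is infinite and $\group K$ is finite, $\Gamma$ has infinitely many vertices; were $\Gamma$ almost transitive, the finite set $V_0$ in the definition would meet every orbit, so $V(\Gamma)$ would be a finite union of finite orbits, hence finite --- a contradiction. Therefore $\Gamma$ is not almost transitive, in particular not transitive. Since the above holds for \emph{every} finite generating system $X$, a fortiori for those of size at most $\rank(\group T_p)+1=3$, and since the proper non-trivial subgroups of $\group T_p$ are exactly the copies of $\group C_p$, the group $\group T_p$ is strongly simple. I do not expect a genuine obstacle; the only step needing slight care is the last one, namely that finiteness of all $\Aut(\Gamma)$-orbits together with the infiniteness of $\Gamma$ simultaneously rules out almost transitivity --- and hence transitivity --- for all admissible generating systems at once.
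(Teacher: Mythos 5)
Your proposal is correct and follows essentially the same route as the paper: self-normalization of proper subgroups of $\group T_p$ gives singleton $X$-orbits via the coset-counting corollary, hence trivial $\Aut_X(\Gamma)$, and the proposition on cyclic prime-order subgroups gives finiteness of all $\Aut(\Gamma)$-orbits. You merely spell out details the paper leaves implicit (why $N_{\group T_p}(\group K)=\group K$, why a root-fixing $X$-automorphism is trivial, and why finite orbits on an infinite graph preclude almost transitivity), all of which are accurate.
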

\begin{proof}
Due to the particular structure of subgroups in $\group T_p$, we have $N_{\group T_p}(\group K)=\group K$, which implies that all $X$-orbits are singletons.
Thus, $\Aut_X(\Gamma)=\{1\}$.
Since $\group K$ is cyclic of prime order, each orbit is a finite union of $X$-orbits and therefore finite.
\end{proof}
The existence of strongly simple infinite groups partially answer to a question of Benjamini and Duminil-Copin:
\begin{question}
Does there exists a constant $M$ such that every infinite transitive (Cayley) graph $\Gamma$, not quasi isometric to $\mathbf Z$, covers an infinite transitive graph $\Delta$ of girth at most $M$ and such that $\Delta$ is non quasi-isometric to $\Gamma$ ?
\end{question}
The original motivation for this question was a conjecture about the connective constant of transitive graphs. This conjecture was solved by Grimmet and Li in \cite{ConnConst}.
But the question of Benjamini and Duminil-Copin remains and the following weak form is still an open problem:
\begin{conjecture}\label{conjBenjaminiFaible}
Every infinite Caley graph $\Gamma$, not quasi isometric to $\mathbf Z$, covers an infinite transitive graph $\Delta\not\iso\Gamma$.
\end{conjecture}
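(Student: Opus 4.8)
The plan is to recast the conjecture in the subgroup language of Section~\ref{sectionTransitivity} and then split according to whether $\group A$ has a large enough proper quotient. Write $\Gamma=\Cayley{A}{X}\iso\Schrei{G}{N}{X}$, where $\G$ is of the form \eqref{group} and $\group N\unlhd\G$. By Proposition~\ref{propCovering} together with Corollary~\ref{CorTransitivity}, finding an infinite transitive graph $\Delta\not\iso\Gamma$ covered by $\Gamma$ is equivalent to finding a group $\G'$ of the same degree as $\G$ and a length-transitive subgroup $\group M\leq\G'$ of infinite index such that $\group N$ is length-isomorphic to a subgroup of a conjugate of $\group M$, with $\schrei(\G',\group M,(X')^\pm)\not\iso\Gamma$. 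This reformulation makes the role of $\group N$ explicit and shows that the whole difficulty lies in producing the target subgroup $\group M$.

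\emph{The generic case.} Suppose first that $\group A$ admits a proper normal subgroup $\group K\neq\{1\}$ with $\group A/\group K$ infinite (equivalently, $\group A$ is not just-infinite). Then the projection $\group A\twoheadrightarrow\group A/\group K$ induces a covering $\Gamma\to\cayl(\group A/\group K,\bar X^\pm)$ onto an infinite Cayley graph, where $\bar X$ is the image of $X$; being a Cayley graph, the target is transitive. It then only remains to arrange $\Delta=\cayl(\group A/\group K,\bar X^\pm)\not\iso\Gamma$, which I would secure by comparing a quasi-isometry invariant preserved under graph isomorphism --- the growth type or, when available, the number of ends --- choosing $\group K$ so that this invariant strictly changes (for instance passing to a one-ended quotient of an infinitely-ended group, or reducing the growth degree). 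Since $\Gamma$ is assumed not quasi-isometric to $\Z$, it is not two-ended, which leaves enough room to separate it from a suitable proper quotient; this half is routine modulo the choice of invariant.

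\emph{The just-infinite case and the main obstacle.} The crux is when $\group A$ is just-infinite, so that every proper quotient is finite and no infinite transitive Cayley cover is available. For such $\group A$ the strategy is to build $\Delta$ as a transitive graph that is \emph{not} a Cayley graph; since $\Gamma$ is a Cayley graph, this automatically gives $\Delta\not\iso\Gamma$, so the entire task reduces to exhibiting the length-transitive infinite-index subgroup $\group M$ above. The hardest sub-case, and the main obstacle I expect, is that of infinite \emph{simple} groups, epitomized by the Tarski monsters: by Proposition~\ref{TarskiStronglySimple} no proper non-trivial subgroup of $\group T_p$ yields a transitive Schreier graph over a given generating system, so one cannot take $\G'=\G$ and must pass to a \emph{different} free product $\G'$ of the form \eqref{group} of the same degree (for degree $4$, say $(\ZZ)^{\ast 4}$ or $\Z\ast\ZZ\ast\ZZ$ in place of $\Z\ast\Z$). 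The conjecture for $\group T_p$ thereby reduces to constructing, inside such a $\G'$, an infinite-index length-transitive subgroup $\group M$ carrying a length-isomorphic copy of $\group N$ --- equivalently, an infinite transitive non-Cayley graph of the correct degree covered by $\cayl(\group T_p,X^\pm)$. Producing such a pair $(\G',\group M)$, against the strong rigidity imposed by the required length-isomorphism, is exactly the point at which the argument becomes genuinely open, and I expect this construction to be the decisive difficulty.
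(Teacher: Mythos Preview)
The statement you are attempting to prove is \emph{not} proved in the paper: it is explicitly presented as an open conjecture. In fact, immediately after stating it the paper argues in the opposite direction, noting that the analogue with $X$-coverings is false (Cayley graphs of infinite strongly simple groups give counterexamples) and that ``this is a reason to believe that the conjecture itself is false, with Cayley graph of strongly simple groups as possible counter-examples.'' So there is no ``paper's own proof'' to compare to, and your proposal runs counter to the paper's stated expectation.

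As to the proposal itself, it is an outline, not a proof, and you acknowledge as much in the just-infinite case: you reduce the Tarski-monster situation to constructing a suitable pair $(\G',\group M)$ and then say this ``is exactly the point at which the argument becomes genuinely open.'' That is precisely the obstruction the paper highlights via Proposition~\ref{TarskiStronglySimple}, and nothing in your sketch overcomes it. Your ``generic'' case is also incomplete. You claim that when $\group A$ has a non-trivial normal subgroup $\group K$ with $\group A/\group K$ infinite, one can ``arrange $\Delta\not\iso\Gamma$'' by comparing a quasi-isometry invariant and ``choosing $\group K$'' accordingly; but you neither prove that such a $\group K$ can always be chosen nor that any invariant actually separates the two graphs. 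When $\group K$ is finite, the paper itself shows (via Theorem~\ref{thmQIso}) that $\Cayley{A}{X}$ and $\cayl(\group A/\group K,\bar X^\pm)$ are quasi-isometric, so no quasi-isometry invariant can distinguish them; and nothing prevents them from being isomorphic as unlabeled graphs (covers of infinite graphs can be isomorphic to their base, as the regular tree shows). Thus even the ``routine'' half has a genuine gap.
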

If we ask for $X$-coverings, the conjecture is false, with Cayley graph of infinite strongly simple groups as counter-examples.
This is a reason to believe that the conjecture itself is false, with Cayley graph of strongly simple groups as possible counter-examples.


\begin{thebibliography}{0}
%
\bibitem{Adyan} S. I. Adyan and I. G. Lys\"enok, On Groups all of whose Proper Subgroups are Finite Cyclic, 1992 Math. USSR Izv. 39 905.
%
\bibitem{Aharoni} R. Aharoni, Infinite matching theory, \emph{Discrete Mathematics}, Vol. 95, \No 1–3 (Elsevier, 1991).
%
\bibitem{BenjaminiCurien} I. Benjamini and N. Curien, Ergodic Theory on Stationary Random Graphs, arXiv:1011.2526v1 (2010).
%
\bibitem{Cannizzo} Jan Cannizo, On invariant Schreier Structures.
%
\bibitem{GodsilRoyle} C. Godsil and G. Royle, Algebraic Graph Theory, (Springer, 2001).
%
\bibitem{ConnConst} Geoffrey R. Grimmet and  Zhongyang Li, Bounds on Connective Constants of Regular Graphs
%
\bibitem{NagnKaim} Rotislav Grigochuk, Vadim A. Kaimanovich and Tatiana Nagnibeda, Ergodic Properties of Boundary Actions and Nielsen-Schreier Theory.
%
\bibitem{Gross} J. L. Gross, Every connected regular graph of even degree is a Schreier coset graph, \emph{Journal of Combinatorial Theory} \No 22  (Elsevier, 1977).
%
\bibitem{malnormal} P. de la Harpe and C. Weber, appendix by D. Osin, Malnormal Subgroups and Frobenius Groups: Basics and Examples.
%
\bibitem{Ol}A. Yu. Olshanskii, An infinite group with subgroups of prime orders, Math. USSR Izv. 16 (1981), 279–289; translation of Izvestia Akad. Nauk SSSR Ser. Matem. 44 (1980), 309–321.
%
\bibitem{alternating} Sophie Piccard, Sur les bases du groupe symétrique et du groupe alternant, Math. Ann. 116 (1939), pp. 752-767
%
\bibitem{Petersen} J. Petersen, Die Theorie der regulären graphs. Acta Mathematica 15 (1): 193–220 (1891).
%
\bibitem{Sabidussi} G. Sabidussi, On a Class of Fixed-Point-Free Graphs, \emph{Proceedings of the American Mathematical Society}, Vol. 9, \No 5 (1958).
%
\bibitem{Serre} J.-P. Serre, Arbres, amalgames, $\SL_2$, \emph{Ast\'erisque} No 46 (Soci\'et\'e Math\-\'em\-atique de France, 1977).
%
\bibitem{Stalling} J. R. Stalling, Topology of finite graphes, \emph{Inventiones Mathematicae}, Vol. 71, \No 3 (Springer, 1983).
%
\bibitem{Trofimov} V. I. Trofimov, Groups of automorphisms of graphs as topological groups, \emph{Mat. Zametki} 38 (1985) 378–385. [English translation Math. Notes 38 (1985) 717–720.]
%
\end{thebibliography}
\end{document}